 \let\mathsec\mathsf
\let\mathcal\mathscr}
\let\mathscr\mathcal}
\theoremstyle{plain}
\newtheorem{prop}[subsubsection]{Proposition}
\newtheorem{theo}[subsubsection]{Theorem}
\newtheorem{coro}[subsubsection]{Corollary}
\newtheorem{lemm}[subsubsection]{Lemma}
\theoremstyle{definition}
\theoremstyle{remark}
\newtheorem{rema}[subsubsection]{Remark}
\def\subsection{\@startsection{subsection}{2}%
   \z@{.7\linespacing\@plus.3\linespacing}{0.3\linespacing}
   {\normalfont\bfseries}}
 \let\c@equation\c@subsubsection
 \let\cl@equation\cl@subsubsection
\def\l@table{\@tocline{0}{3pt plus2pt}{0pt}{}{\itshape}}
\def\ie{\emph{i.e.}\xspace}
\def\Card{\mathop\#}
\def\Cl{\mathord{\mathscr C\!\ell}}
\def\Cl{\mathscr C}
\def\Clan{\mathscr C^{\text{\upshape an}}}
\def\AD{{\mathbb A}}
\def\C{{\mathbf C}}
\def\ga{{\mathbf G}_a}
\def\gm{{\mathbf G}_m}
\def\N{{\mathbf N}}
\def\P{{\mathbf P}}
\def\Q{{\mathbf Q}}
\def\R{{\mathbf R}}
\def\Z{{\mathbf Z}}
\def\Zeta{{\mathrm Z}}
\def\Tube{{\mathsf T}}
\let\ra\rightarrow
\let\epsilon\varepsilon \let\eps\epsilon
\let\epsilon\varepsilon
\let\phi\varphi
\let\emptyset\varnothing
\let\leq\leqslant
\let\geq\geqslant
\let\le\leq
\let\ge\geq
\def\eff{{\text{\upshape eff}}}
\def\abs#1{\left\lvert{#1}\right\rvert}
\def\norm#1{\left\|{#1}\right\|}
\def\DeclareMathOperator#1#2{\def #1{\operatorname{#2}}}
\DeclareMathOperator{\Re}{Re} 
\DeclareMathOperator{\Pic}{Pic}
\DeclareMathOperator{\Gal}{Gal}
\DeclareMathOperator{\Spec}{Spec}
\DeclareMathOperator{\rang}{rank}
\DeclareMathOperator{\div}{div}
\DeclareMathOperator{\Val}{Val}
\DeclareMathOperator{\Val}{Val}
\DeclareMathOperator{\Ind}{Ind}
\def\bmA{{\bar{\mathscr A}}}
\def\loccit{\emph{loc.\ cit.}\xspace}
\def\resp{\emph{resp.}\xspace}
\def\eg{\emph{e.g.}\xspace}
\def\ie{\emph{i.e.}\xspace}
	\def\Tr{\operatorname{Tr}}
	\def\EP{\operatorname{EP}}
	\def\Nm{\operatorname{N}}
\title[Integral points of bounded height]{Integral points of bounded height \\
on partial equivariant compactifications \\ of vector groups}
\author{Antoine Chambert-Loir}
\address{Universit\'e de Rennes~1, IRMAR--UMR 6625 du CNRS, Campus de Beaulieu, 35042 Rennes Cedex, France}
\address{Institut universitaire de France}
\email{antoine.chambert-loir@univ-rennes1.fr}
\author{Yuri Tschinkel}
\address{Courant Institute, NYU, 251 Mercer St.  New York, NY 10012, USA}
\email{tschinkel@cims.nyu.edu}
\begin{document}
\date{\today}
 
\begin{abstract}
We establish asymptotic formulas  for the number of integral points
of bounded height on partial equivariant compactifications of vector
groups.

\end{abstract}

\keywords{Heights, Poisson formula, Manin's conjecture, Tamagawa measure}
\subjclass{11G50 (11G35, 14G05)}

\maketitle


\section{Introduction}

In this paper we study the distribution of integral points of
bounded height on partial equivariant compactifications of additive
groups, \ie, quasi-projective algebraic varieties defined over number 
fields, equipped with an action of a vector group $G\simeq\mathbb G_a^n$, 
and containing $G$ as an open dense orbit.

The case of projective compactifications
has been the subject of \cite{chambert-loir-t2002}.
Motivated by a conjecture of Manin, 
we established analytic properties of corresponding height zeta 
functions and deduced
asymptotic formulas for the number of \emph{rational} points of
bounded height. 
Examples of such varieties are
projective spaces of any dimension and their blow-ups 
along a reduced subscheme contained in a hyperplane, as well as
certain singular Del Pezzo surfaces 
(listed in~\cite{derenthal-loughran2009}).

The study of \emph{integral} solutions of diophantine equations
is a major branch of number theory. Our aim was to extend to
the theory of \emph{integral} points the 
geometric framework and techniques
developed in the context of Manin's conjecture. 

Important ingredients in the context of rational points
on a smooth projective variety~$X$ over a number field~$F$ are:
\begin{itemize}
\item the convex geometry of the cone of effective divisors in the 
Picard group of $X$;
\item the positivity of the anticanonical class of~$X$,
which is assumed to be ample ($X$ Fano), or at least big;
\item Tamagawa measures on the adelic space~$X(\AD_F)$ of~$X$. 
\end{itemize}
Manin's conjecture~\cite{franke-m-t89}, as refined by Peyre~\cite{peyre95}
and~\cite{batyrev-t95b,batyrev-t98},
predicts that the number of $F$-rational points
of anticanonical height~$\leq B$ is asymptotic to
\[ \alpha(X) \beta(X) \tau(X)  B (\log B)^{b-1}, \]
where $b=\rang(\Pic(X))$ is the rank of the Picard group of $X$, 
$\alpha(X)$ is a rational number depending
on the location of the anticanonical class in 
the effective cone, $\beta(X)$ is the cardinality of the Galois cohomology
group $\mathrm H^1(\Pic(\bar X))$, and $\tau(X)$
is the volume of the closure of~$X(F)$ in~$X(\AD_F)$
with respect to the Tamagawa measure derived from
the metrization of the canonical line bundle defining
the height.

In \cite{chambert-loir-tschinkel2009a}, 
we generalized the theory of Tamagawa measures
to the quasi-projective case and established asymptotic formulas for
volumes of analytic and adelic height balls of growing radius. 

In the present paper,
we establish analytic properties of height zeta functions
for integral points on partial equivariant compactifications~$U$
of vector groups. We assume that $U$ is the complement
to a $G$-invariant divisor~$D$ in a smooth projective
equivariant compactification~$X$ of~$G$, and that, geometrically,
$D$ has simple normal crossings. We fix integral models for~$X$
(assumed to be proper over~$\Spec(\mathfrak o_F)$), $D$
and~$U$, and consider the subset
of $S$-integral points in~$X(F)$ with respect to these models,
$S$ being a finite
set of places of~$F$, including all archimedean places.
This will be denoted by $\mathscr U(\mathfrak o_{F,S})$.

In this setup we defined in~\cite{chambert-loir-tschinkel2009a}, for each place~$v\in S$,
a simplicial complex $\Clan_{F_v}(D)$, which we called
the \emph{analytic Clemens complex}, and which encodes the incidence
properties of the $v$-adic manifolds 
given by the irreducible components of~$D$.
Its dimension is equal to the maximal number of irreducible components
of~$D$ defined over the local field~$F_v$ whose intersection
has $F_v$-points,  minus one.

By definition, the log-canonical line bundle on~$X$ is $K_X+D$.
The log-anticanonical line bundle is its opposite; 
in the case under study, it is big.
An adelic metrization of this line bundle
defines a height function~$H$ on $X(F)$ which satisfies a finiteness
property: for any real number~$B$, the set of points~$x$ in~$G(F)$
such that $H(x) \leq B$  is finite.
The formula proved in Theorem~\ref{theo.main} asserts that 
the number $N(B)$ of points in~$G(F)\cap \mathscr U(\mathfrak o_{F,S})$
of bounded log-anticanonical height~$\leq B$ satisfies
\[ N(B) \sim \Theta B (\log B)^{b-1}, \]
where 
\[ b = \rang(\Pic(U)) + \sum_{v\in S} (1+\dim \Clan_{F_v}(D)) \]
and $\Theta$ is a positive real number: 
a product of an adelic volume, involving places outside~$S$, and contributions
from places in~$S$ given by sums of integrals over the minimal dimensional
strata of the analytic Clemens complex.

The case of rational points corresponds to the case $D=\emptyset$.
Provided the chosen model of~$D$ is also empty,
rational points and integral points coincide then, \ie, 
$\mathscr U(\mathfrak o_{F,S})=X(F)$, and
the simplicial complex $\Clan_{F_v}(D)$ has dimension~$-1$.
We then recover the asymptotic distribution of rational points 
which we had established in~\cite{chambert-loir-t2002}.

The notions of $S$-integral points and heights extend to the adelic
group $G(\AD_F)$. In a very general context,
we had proved in~\cite{chambert-loir-tschinkel2009a}
an asymptotic formula for the volume~$V(B)$
of $S$-integral adelic points
in~$G(\AD_F)$ of log-anticanonical height bounded by~$B$.
Our main result says that,  for $B\ra\infty$,
\[ N(B) \sim V(B), \]
see Remark~\ref{rema:N-V}.

The explicit description of the constant~$\Theta$ as a product
of an adelic volume, and of local volumes,
implies an equidistribution property of $S$-integral points
for some explicit measure on $X(\AD_F)$
defined as the product of a measure on the adelic
space $X(\AD_F^S)$ (where $\AD_F^S$ is the ring analogous to the adeles of~$F$,
but where only places not belonging to~$S$
are taken into account), and of measures on 
the $v$-adic spaces $X(F_v)$, for all places $v\in S$.
On $X(\AD_F^S)$, it is given as 
a regularized measure on the adelic space~$U(\AD_F^S)$,
in the spirit of the definition of Tamagawa measures on algebraic
groups~\cite{weil82}, or on projective varieties~\cite{peyre95}.
At places~$v\in S$, it is localized on the
strata of minimal dimension of the $F_v$-analytic Clemens complex.
(See Section~\ref{sec:equidistribution}.)

\bigskip

The proof relies on a strategy developed in our previous papers
on rational points. We introduce the height zeta function
\[ \mathrm Z(s) = \sum_{x\in G(F)\cap \mathscr U(\mathfrak o_{F,S})} H(x)^{-s}, \]
for a complex parameter~$s$. We prove its convergence
for $\Re(s)>1$, and then establish its meromorphic continuation 
in some half-plane $\Re(s)>1-\delta$ whose poles are
located on finitely many arithmetic progressions on the line $\Re(s)=1$.
The pole of highest order, equal to~$b$, is at $s=1$. The asymptotic
formula for~$N(B)$ follows from an appropriate Tauberian theorem.

Observe that the restriction of the height~$H$ to~$G(F)$
can be written as a product over all places~$v$ of~$F$
of a local height function~$H_v$ defined on~$G(F_v)$.
Hence, we can extend the height as a function on~$G(\AD_F)$.
Let us also introduce, for each place~$v\not\in S$, the characteristic
function~$\delta_v$ of the set~$\mathscr U(\mathfrak o_v)$
of $v$-integral points 
in~$X(F_v)$ and set $\delta_v\equiv 1$ for $v\in S$.
Using the Poisson summation formula for the locally compact group $G(\AD_F)$
and its cocompact discrete subgroup $G(F)$,
we can write
\[ \mathrm  Z(s) = \sum_{x\in G(F)}  \prod_{v\in\Val(F)} \delta_v(x) H_v(x)^{-s}
 = \sum_{\psi\in (G(\AD_F)/G(F))^*} \prod_v \mathscr F_v( \delta_v H_v^{-s};\psi_v), \]
where, for each place~$v$ of~$F$,
\[ \mathscr F_v( \delta_vH_v^{-s};\psi_v) = \int_{G(F_v)} \delta_v(x) H_v(x)^{-s} \psi_v(x)\,\mathrm dx_v \]
is the local Fourier transform at the local component~$\psi_v$
of the global character~$\psi$.

The Fourier transform
at the trivial character furnishes the main term in the right
hand side of the Poisson formula.
Its analytic properties have been established in~\cite{chambert-loir-tschinkel2009a}.

In the case of rational points, the local integrals
$\mathscr F_v(H_v^{-s};\psi_v)$
converge absolutely when $s$ belongs to the half-plane $\{\Re(s)>0\}$
and the  poles of the global Fourier transform are all
accounted for by the Euler products.
The main technical difficulty which appears for integral points
is that we need to establish, for places $v\in S$,
some meromorphic continuation to the left of~$\Re(s)=1$ 
of the local integrals~$\mathscr F_v(\delta_v H_v^{-s};\psi_v)$.
Moreover, we need to provide decay estimates
sufficient to ensure the convergence of the right hand side
in some half-plane~$\Re(s)>1-\delta$.
  
For places in~$S$, the local integrals at the trivial character
are integrals of Igusa-type. Such integrals were 
investigated in our previous paper~\cite{chambert-loir-tschinkel2009a}: 
using Tauberian theorems,
the asymptotic of local height balls was deduced from their
analytic properties.
For non-trivial characters, we are led to consider \emph{oscillatory}
Igusa-type integrals, higher-dimensional analogs of integrals
of the form
\[ \int_{F_v} \abs x^{s-1} \psi(a x^d) \Phi(x)\,\mathrm dx, \]
where $d\in\Z$, $a\in F_v$, and $\Phi$ is a Schwartz function
on~$F_v$.
Establishing their meromorphic
continuation, with appropriate decay in the parameter~$\psi$,
is the technical heart of this work. This is accomplished 
in Section~\ref{sec.non-trivial-S}, using estimates
for stationary phase integrals from Section~\ref{sect:dim1}.
We note that similar inequalities have already played an important
rôle in the question of counting integral points of
bounded height,  \eg, in the adelic presentation by~\cite{lachaud1982}
of the circle method.
We refer also to~\cite{cluckers2011} for recent general developments
in the same spirit.
 
Ultimately, the corresponding Euler products at non-trivial
characters do not contribute to the main pole
of the height zeta function.  However, we found that this property
was more subtle than in the case of rational points
(see Section~\ref{sec.leading-pole}).
Precisely at this point, we use the
hypothesis that the height function is the log-anticanonical one.

\bigskip

\emph{Acknowledgments. ---}
During the final stages of preparation of this work, 
we have benefited from conversations with E. Kowalski. 
We also thank N.~Katz for his interest and stimulating comments.

The first author was supported by the Institut
universitaire de France, as well as by the National Science
Foundation under agreement No.~DMS-0635607. He would also
like to thank the Institute for Advanced Study in Princeton
for its warm hospitality which permitted the completion of this paper.
The second author was partially supported by NSF grants DMS-0739380 and 
0901777.

We thank the referees for their comments which helped to improve
the exposition.

\section{One-dimensional theory}
\label{sect:dim1}

\subsection{Local theory: characters and quasi-characters}
\label{sect:local}

Let $F$ be a local field of characteristic zero, \ie, a completion of a number field with respect to a valuation.   
It is locally compact and we denote by  $\mu$ a Haar measure on
the additive group~$F$. 
We write $\abs\cdot_F$ for the corresponding modulus function on~$F$,
defined by the equality 
$\mu(a \Omega)=\abs a\mu(\Omega)$, for any bounded open subset~$\Omega$
of~$F$. For $F\neq\C$, this is an absolute value on~$F$; if $F=\C$,
it is the square of the usual absolute value. 
When there is no risk of confusion, we shall remove the index~$F$.
We write $\Tr$ and $\Nm$ for the absolute trace and norm, these take values in 
the corresponding completion of $\Q$.
For non-archimedean $F$, let $\mathfrak o_F$ be the ring of integers, 
$\mathfrak o_F^*$ its multiplicative group, $\mathfrak d_F$ the local different of~$F$, and $q$ the order of the residue field. For archimedean~$F$,
we write $\mathfrak o_F^*$ for the subgroup of
elements of absolute value one.  
From now on we fix a local Haar measure~$\mu$ on~$F$, normalized so that
\[ 
\int_{\abs{x}\le 1} \mathrm d\mu(x)  = \begin{cases}  2 & \text{ if $F$ is real; } \\   
                                      2\pi  & \text{ if $F$ is complex; } \\
                                 \Nm(\mathfrak d_F)^{-1/2}& \text{ if $F$ is non-archimedean.}\end{cases} 
\]
We will often write $\mathrm d x$ instead of $\mathrm d\mu(x)$.

Let $\psi$ be the unimodular character
of~$F$ with respect to~$\mathrm d x$.
Concretely, 
\[
\psi(x)= \begin{cases}
 e^{2 \pi \mathrm i \Tr(x) } & \text{for non-archimedean fields~$F$,}\\
 e^{-2 \pi \mathrm i \Tr(x) } & \text{otherwise.}\end{cases}
\]
(For non-archimedean fields, the exponential is defined by embedding
naturally~$\Q_p/\Z_p$ into~$\Q/\Z$ and then applying $e^{2\pi\mathrm i\cdot}$.)
The map
\[  F\times  F\ra\C^* , \quad  (a, x)\mapsto \psi(ax) 
\]
is a perfect pairing. 
The local Fourier transform is the function on~$F$ defined by 
\[ 
\hat{f}(a) = \int_{F} f(x) \psi(ax)\mathrm d x,
\]
whenever the integral converges.

Let  $\mathscr Q(F^*)$ be the set of quasi-characters of $F^*$, \ie, 
continuous homomorphisms $\chi\colon F^*\ra\C^*$.
Examples are given by the \emph{unramified characters,}
\ie, those whose restriction to~$\mathfrak o_F^*$ is trivial;
they are of the form $x\mapsto \abs{x}^s$, for some complex
number $s\in \C$.

The image of the absolute value is equal to~$\R_{>0}$ if $F$ is archimedean,
and to~$q^\Z$ if $F$ is non-archimedean; moreover, 
the morphism of groups $\abs\cdot\colon F^*\ra \abs{F^*}$ has a section,
given by $t\mapsto t$ if $F=\R$, $t\mapsto \sqrt t$ if $F=\C$,
and $q\mapsto \varpi^{-1}$, where $\varpi$ is any fixed
uniformizing element when $F$ is non-archimedean.
Using this section, we identify the group~$F^*$ with the
product~$\abs{F^*}\times\mathfrak o_F^*$, and write
any quasi-character~$\chi$ as
\[ \chi(x)=\abs x^{s} \tilde\chi(\tilde x), \qquad x=(\abs x,\tilde x). \]
The complex number $s=s(\chi)$ 
is determined uniquely by $\chi$ if $F$ is archimedean, 
and uniquely modulo $2\pi \mathrm i /\log(q)$, if $F$ is non-archimedean.  
As in~\cite{tate67b}, we call the \emph{exponent} of
$\chi$ the real number~$r=r(\chi) = \Re(s(\chi))$.
A quasi-character is a character of $F^*$ if and only if its exponent is zero.

We say that two quasi-characters are \emph{equivalent} 
if their quotient is unramified. 
By the map $\chi\mapsto s(\chi)$, 
the set of equivalence classes of quasi-characters is viewed 
as the Riemann surface $\C$ (if $F$ is archimedean)
or $\C/(2\pi\mathrm i/\log(q))\Z)$ (if $F$ is non-archimedean).
The space $\mathscr Q(F^*)$ is a trivial bundle over this surface,
with discrete topology on the fibers; 
we endow it with the corresponding structure of a Riemann surface.

Let us assume that $F$ is non-archimedean.
Let $n$ be the minimal natural number such that 
$\chi$ is trivial on the subgroup $(1+\varpi^n \mathfrak o_F)$
of~$F^*$. 
The $\mathfrak o_F$-ideal generated by $\varpi^n$ 
is called the \emph{conductor} of $\chi$.

\subsection{Local Tate integrals}
\label{subsec:local-tate}

We now fix a Haar measure on the multiplicative group~$F^*$:
\[ 
\mathrm d^\times x = \begin{cases}
     \frac{\mathrm d x}{\abs{x}} & \text{ if $F$ is archimedean} \\
    (1-\frac{1}{q})^{-1} \frac{\mathrm dx}{\abs{x}} &  
\text{ if $F$ is non-archimedean} \end{cases}
\]
When $F$ is non-archimedean, this normalization implies
\[ 
\int_{\mathfrak o^*_F} \mathrm d^\times x = \Nm(\mathfrak d_F)^{-1/2}.
\] 

\medskip

Let $\mathscr T(F)$ be the class of functions 
$\Phi \colon F\ra \C$ satisfying the following properties:
\begin{itemize}
\item $\Phi$ and $\hat{\Phi}$ 
are continuous and absolutely integrable over $F$ with respect to $\mathrm d x$;
\item for all $r>0$ the functions $x\mapsto \Phi(x)\abs{x}^{r}$ and 
$x\mapsto \hat{\Phi}(x)\abs{x}^r$ are absolutely integrable 
over $F^*$ with respect to $\mathrm d^\times x$. 
\end{itemize}
For $\Phi\in \mathscr T(F)$ and $\chi$ with $r(\chi)> 0$ the integral
\[ 
\zeta(\Phi,\chi) = \int_{F} \Phi(x) \chi(x) \mathrm d^\times x,
\] 
is absolutely convergent. Moreover, it defines a holomorphic function on the set of equivalence classes of 
quasi-characters with $r(\chi)>0$. This function is called the local $\zeta$-function. 

As a classical  example of such integrals, we define, for 
any complex number~$s$ such that $\Re(s)>0$,
\[  \zeta_F(s) = \zeta(\mathbf 1_{\mathfrak o_F},\abs\cdot^s) 
= \int_{\abs x\leq 1} \abs x^{s}\,\mathrm d^\times x. \]
Explicitly (see~\cite{tate67b}, p.~344):
\[ \zeta_F(s) = 
\begin{cases}  2/s & \text{if $F=\R$;} \\
 2\pi/s & \text{if $F=\C$;} \\
 \mathrm N(\mathfrak d_F)^{-1/2}/ (1-q^{-s}) & \text{if $F$ is non-archimedean.}
\end{cases} \]
The corresponding residue is denoted by
\[\mathrm  c_F = \lim_{s\ra 0} s \int_{\abs{x}\leq 1} \abs{x}^s \,\mathrm
d^\times x
= \begin{cases} 2 & \text{if $F=\R$;} \\
 2\pi & \text{if $F=\C$;} \\
 \mathrm N(\mathfrak d_F)^{-1/2} / \log q & \text{otherwise.}\end{cases}
\]
These constants will appear in the definition of residue measures below.

\medskip

The main theorem of the local theory in \cite[Theorem 2.4.1]{tate67b} is:

\begin{prop}
\label{prop:tate-local}
There exists a meromorphic function~$\rho$ on~$\mathscr Q(F^*)$ such
that for any function~$\Phi\in \mathscr T(F)$, 
the local $\zeta$-function  has a meromorphic 
continuation to the domain of all 
quasi-characters given by the functional equation
\[ 
\zeta(\Phi, \chi) = \rho(\chi) \zeta(\hat{\Phi}, \hat{\chi})
\] 
where $\hat{\chi}(x)=\abs{x}\chi(x)^{-1}$. 
\end{prop}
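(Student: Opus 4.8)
The statement is Tate's local functional equation, so the plan is to follow Tate's thesis. The key structural fact is that the space $\mathscr T(F)$ is stable under Fourier transform (by definition, since the conditions on $\Phi$ and $\hat\Phi$ are symmetric — using the inversion formula $\hat{\hat\Phi}(x)=\Phi(-x)$, which is valid because $\Phi$ and $\hat\Phi$ are continuous and integrable), and that the integrability conditions guarantee that $\zeta(\Phi,\chi)$ and $\zeta(\hat\Phi,\hat\chi)$ both converge absolutely and define holomorphic functions on their respective half-planes of quasi-characters, $r(\chi)>0$ and $r(\hat\chi)=1-r(\chi)>0$, which overlap on the strip $0<r(\chi)<1$.

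First I would prove, by a direct computation, the \emph{exchange identity}: for any two functions $\Phi,\Psi\in\mathscr T(F)$ and any $\chi$ with $0<r(\chi)<1$,
\[
\zeta(\Phi,\chi)\,\zeta(\hat\Psi,\hat\chi) = \zeta(\hat\Phi,\hat\chi)\,\zeta(\Psi,\chi).
\]
This is the heart of the argument. One writes out the left-hand side as a double integral over $F^*\times F^*$ of $\Phi(x)\hat\Psi(y)\chi(x)\abs y\chi(y)^{-1}$ against $\mathrm d^\times x\,\mathrm d^\times y$; expanding $\hat\Psi(y)=\int_F\Psi(u)\psi(uy)\,\mathrm du$ and substituting $u=xt$ (using $\mathrm du=\abs x\,\mathrm d^\times x\cdot(\text{const})$), Fubini — justified by the absolute integrability conditions defining $\mathscr T(F)$ — reveals that the expression is symmetric in $\Phi\leftrightarrow\Psi$, whence the identity. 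The main obstacle is precisely the bookkeeping in this computation: keeping track of the normalization of $\mathrm d^\times x$ versus $\mathrm dx$, and carefully checking that Fubini applies, i.e. that the triple integral converges absolutely, which is where one uses that $x\mapsto\Phi(x)\abs x^{r(\chi)}$ and $x\mapsto\hat\Psi(x)\abs x^{1-r(\chi)}$ lie in $L^1(F^*,\mathrm d^\times x)$.

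Granting the exchange identity, I would fix one auxiliary function $\Psi_0\in\mathscr T(F)$ for which $\zeta(\Psi_0,\chi)$ is not identically zero — the standard Gaussian $e^{-\pi x^2}$ for $F=\R$, $e^{-2\pi x\bar x}$ for $F=\C$, and $\mathbf 1_{\mathfrak o_F}$ (suitably twisted to handle ramified $\chi$, e.g.\ using a function supported on a coset of a small subgroup) for $F$ non-archimedean — and \emph{define}
\[
\rho(\chi) = \frac{\zeta(\Psi_0,\chi)}{\zeta(\hat\Psi_0,\hat\chi)}
\]
on the strip $0<r(\chi)<1$. The exchange identity then shows $\rho$ is independent of the choice of $\Psi_0$ (wherever the denominators are nonzero) and that $\zeta(\Phi,\chi)=\rho(\chi)\zeta(\hat\Phi,\hat\chi)$ holds on the strip for every $\Phi\in\mathscr T(F)$. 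Finally I would use this equation to define the meromorphic continuation of $\zeta(\Phi,\cdot)$ to all of $\mathscr Q(F^*)$: on the half-plane $r(\chi)\le 0$ one has $r(\hat\chi)\ge 1>0$, so $\zeta(\hat\Phi,\hat\chi)$ is given by its convergent integral and $\rho(\chi)$ is meromorphic there (one must only know $\rho$ extends meromorphically, which follows from computing $\rho$ explicitly for the one test function $\Psi_0$ — these are the classical Gamma-factor and $(1-q^{-s})/(1-q^{s-1})$-type formulas); then the displayed functional equation is the definition of the continuation, and by uniqueness of analytic continuation it is consistent with the original integral on the overlap. The only subtlety is that $\rho$ itself must be shown meromorphic on the whole Riemann surface $\mathscr Q(F^*)$, which is handled place-by-place by the explicit formula for $\rho$ obtained from $\Psi_0$; this is a finite, classical computation rather than a conceptual difficulty.
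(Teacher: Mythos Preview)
Your proposal is correct and is exactly Tate's original argument; the paper itself does not give a proof but simply cites \cite[Theorem 2.4.1]{tate67b}, so you are in fact reproducing the proof from the reference the paper invokes. There is nothing to compare: your approach \emph{is} the paper's approach, just written out rather than cited.
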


We recall that in~\cite{tate67b}, 
the function $\rho$ is 
defined for exponents $r(\chi)\in (0,1)$ by the functional equation 
(it is holomorphic in that domain) and extended by meromorphic continuation
for all quasi-characters.
It is also computed explicitly in that paper. 
As a consequence, we obtain the following corollary.

\begin{coro}
The local $\zeta$-function is holomorphic at any character
different from the trivial character.  
Moreover, if  $\chi$ is the quasi-character $x\mapsto \abs x^s$, for $s\ra 0$, then
\[ \Phi(0) = 
  \lim_{s\ra 0} \zeta(\Phi,\abs{\, \cdot \,}^s) /  \zeta_F(s)  . \]
\end{coro}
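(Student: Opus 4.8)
The plan is to deduce both assertions from the functional equation in Proposition~\ref{prop:tate-local}, together with the explicit evaluation of $\zeta_F(s)$ and its residue $\mathrm c_F$. The function $\rho$ is meromorphic on the Riemann surface $\mathscr Q(F^*)$, and its only possible poles and zeros lie among the equivalence classes of quasi-characters with exponent $r(\chi)=0$ or $r(\chi)=1$ (as recalled after the proposition, $\rho$ is holomorphic and nonvanishing for $r(\chi)\in(0,1)$, and by the functional equation applied twice, $\rho(\chi)\rho(\hat\chi)$ is a constant of modulus related to $\abs{\chi(-1)}$, so the zeros of $\rho$ at exponent~$1$ correspond to poles at exponent~$0$ and vice versa). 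For a nontrivial character $\chi$, i.e.\ a quasi-character of exponent $0$ other than the unramified classes sitting over $s\equiv 0$, one has $r(\chi)=0$ but $\chi$ is not trivial; I would invoke Tate's explicit computation of $\rho$ to see that $\rho$ is finite and nonzero at such $\chi$ (the poles of $\rho$ at exponent $0$ occur precisely at the trivial/unramified class, reflecting the pole of the local zeta function of $\mathbf 1_{\mathfrak o_F}$). Then in the functional equation $\zeta(\Phi,\chi)=\rho(\chi)\,\zeta(\hat\Phi,\hat\chi)$, the dual $\hat\chi(x)=\abs x\chi(x)^{-1}$ has exponent $r(\hat\chi)=1>0$, so by the convergence statement preceding the proposition $\zeta(\hat\Phi,\hat\chi)$ is holomorphic at $\hat\chi$; multiplying by the holomorphic $\rho$ shows $\zeta(\Phi,\chi)$ is holomorphic at $\chi$, which is the first claim.

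For the second claim, take $\chi=\abs{\,\cdot\,}^s$ and let $s\to 0$. The functional equation gives
\[ \zeta(\Phi,\abs{\,\cdot\,}^s) = \rho(\abs{\,\cdot\,}^s)\,\zeta(\hat\Phi,\abs{\,\cdot\,}^{1-s}), \]
and likewise $\zeta_F(s)=\zeta(\mathbf 1_{\mathfrak o_F},\abs{\,\cdot\,}^s)=\rho(\abs{\,\cdot\,}^s)\,\zeta(\widehat{\mathbf 1_{\mathfrak o_F}},\abs{\,\cdot\,}^{1-s})$, since $\mathbf 1_{\mathfrak o_F}\in\mathscr T(F)$. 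Dividing, the factor $\rho(\abs{\,\cdot\,}^s)$ — which carries the pole at $s=0$ — cancels:
\[ \frac{\zeta(\Phi,\abs{\,\cdot\,}^s)}{\zeta_F(s)} = \frac{\zeta(\hat\Phi,\abs{\,\cdot\,}^{1-s})}{\zeta(\widehat{\mathbf 1_{\mathfrak o_F}},\abs{\,\cdot\,}^{1-s})}. \]
As $s\to 0$, the exponent $1-s$ tends to $1$, so both $\zeta(\hat\Phi,\abs{\,\cdot\,}^{1-s})$ and $\zeta(\widehat{\mathbf 1_{\mathfrak o_F}},\abs{\,\cdot\,}^{1-s})$ are holomorphic there and converge to $\zeta(\hat\Phi,\abs{\,\cdot\,})$ and $\zeta(\widehat{\mathbf 1_{\mathfrak o_F}},\abs{\,\cdot\,})$ respectively. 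It remains to identify the limit with $\Phi(0)$. For the numerator I would use Fourier inversion: $\int_{F}\hat\Phi(x)\,\mathrm dx=\Phi(0)$ (valid since both $\Phi,\hat\Phi$ are continuous and integrable, which is part of the definition of $\mathscr T(F)$), and rewrite $\zeta(\hat\Phi,\abs{\,\cdot\,})=\int_{F^*}\hat\Phi(x)\abs x\,\mathrm d^\times x=\int_{F^*}\hat\Phi(x)\,\mathrm dx=\int_F\hat\Phi(x)\,\mathrm dx=\Phi(0)$ (the contribution of $x=0$ being null). Applying the same computation to $\mathbf 1_{\mathfrak o_F}$, whose Fourier transform evaluated at $0$ equals $\mathbf 1_{\mathfrak o_F}(0)=1$, gives $\zeta(\widehat{\mathbf 1_{\mathfrak o_F}},\abs{\,\cdot\,})=1$. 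Hence the quotient tends to $\Phi(0)/1=\Phi(0)$, as desired.

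The main obstacle is the first claim: justifying that $\rho$ has no pole at a nontrivial character of exponent $0$. This is not formal — a priori the functional equation only pins down $\rho$ where $0<r(\chi)<1$, and one must genuinely consult Tate's explicit formula for $\rho$ (a ratio of local $\Gamma$- or Euler-type factors whose poles are located at the unramified quasi-characters over $s\equiv 0$ and over $s\equiv 1$) to exclude poles elsewhere on the critical lines. Concretely: in the non-archimedean case $\rho$ for an unramified $\chi=\abs{\,\cdot\,}^s$ is (up to a constant) $(1-q^{-s})/(1-q^{s-1})$, which has its pole only at $s\equiv 1$ and its zero only at $s\equiv 0$ modulo $2\pi\mathrm i/\log q$, while for ramified $\chi$ it is a nonzero constant (a Gauss sum) times a power of $q^s$; in the archimedean case one uses the corresponding Gamma-factor identities. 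So $\rho(\chi)$ is finite and nonzero for every character $\chi$ that is not an unramified twist of the trivial one, which is exactly what the argument needs. Everything else — the cancellation of $\rho$, the passage to $s=0$, and the Fourier-inversion evaluation — is routine.
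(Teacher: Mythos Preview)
Your approach is correct and is exactly what the paper intends: it gives no proof, merely pointing to Tate's explicit computation of~$\rho$ as the source of the corollary, and your argument fills that in via the functional equation. Two harmless slips are worth flagging. First, your explicit formula for~$\rho$ in the unramified non-archimedean case is inverted: $\rho(\abs{\,\cdot\,}^s)$ has its \emph{pole} at $s\equiv 0$ and its \emph{zero} at $s\equiv 1$ (otherwise the functional equation could not account for the pole of~$\zeta_F(s)$ at $s=0$). This does not damage the argument, since all you actually use is that $\rho$ is holomorphic at every \emph{nontrivial} character of exponent~$0$, which remains true. Second, in the non-archimedean case the paper's normalization gives $\abs x\,\mathrm d^\times x=(1-q^{-1})^{-1}\,\mathrm dx$, not $\mathrm dx$; hence $\zeta(\hat\Phi,\abs{\,\cdot\,})=(1-q^{-1})^{-1}\Phi(0)$ and $\zeta(\widehat{\mathbf 1_{\mathfrak o_F}},\abs{\,\cdot\,})=(1-q^{-1})^{-1}$, and the extra constant cancels in the quotient, so your conclusion $\Phi(0)$ survives.
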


%
%

\subsection{Oscillatory integrals}

Let $F$ be a local field.
In this section we study integrals of the form
\[ 
\mathscr I( \Phi, a, d, \chi) = \int_{F} \psi(ax^d)\chi(x)  \Phi(x)\mathrm d x, 
\quad d\in \Z, \quad a\in F, \quad \chi \in \mathscr Q(F^*), 
\] 
for suitable test functions~$\Phi$ on~$F$. (We choose to use
the measure~$\mathrm dx$; using the multiplicative Haar measure~$\mathrm d^\times x$ would result in a shift in the quasi-character~$\chi$.)
When $d$ is nonnegative, the behavior of such integrals as a function
of~$\chi$ is explained by Proposition~\ref{prop:tate-local}.
We need to understand the
decay of such integrals when the absolute value 
of the parameter~$a$ grows to infinity, for positive integers $d$. 
This result will also be used to establish a meromorphic continuation
of these integrals, with respect to~$\chi$, when
$d$ is negative.

\medskip

We recall some terminology.
A complex valued function on~$F$ is called \emph{smooth} if it is infinitely
differentiable, or locally constant, according to whether
$F$ is archimedean or not. \emph{Schwartz functions} are
smooth functions with compact support. The vector space~$\mathscr S(F)$
of Schwartz functions has a natural topology; a subset
of~$\mathscr S(F)$ is bounded when:
\begin{enumerate}
\item the supports
of all of its elements are contained in a common compact subset;
\item if $F=\R$, then for each integer~$n$, their $n$th derivatives
are uniformly bounded;
\item if $F=\C$, then for all~$(m,n)\in\N^2$, their partial
derivatives~$\partial^m_x\partial^n_y$ are uniformly bounded;
\item if $F$ is ultrametric, then there exists a positive real number~$\delta$ such that all of its elements are constant on any ball of radius~$\delta$ in~$F$.
\end{enumerate}

When the test function~$\Phi$ vanishes in a neighborhood of~$0$
and belongs to some Schwartz class,
the integral $\mathscr I$ is a \emph{non-stationary phase} integral
and its decay with respect to the parameter~$a$ is classical.
For example, the integral
\[ 
 \int_{\mathfrak o_F^*} \chi(x)\psi(ax) \, \mathrm d^\times x,
\] 
can be computed explicitly 
(see, e.g.,  \cite[p.~20--21]{weil1971});
it vanishes for $\abs a$ large enough with respect
to the conductor~$\mathfrak f(\chi)$.
%
%

We will use the following lemma.

\begin{lemm}\label{lemme2}
Let us assume that $F$ is ultrametric and let $\varpi$ be a uniformizing
element.  
Let $d$ be a positive integer; set $c=\log_q\#(\mathfrak o_F/(d\mathfrak d_F))$.
Let $\Phi$ be a Schwartz function on~$F$ with support in~$\mathfrak o_F$.
Let $n(\Phi)$ be the least positive integer~$n$ such that $\Phi$
is constant on residue classes modulo~$\varpi^n$.
For any $ a \in F$ such that 
\[ \abs a \geq \max( q^{n(\Phi)+c+1}, q^{2(c+1)} ), \]
one has
\[ \int_{\mathfrak o_F\setminus(\varpi)} \Phi(x)\psi( a  x^d)\,\mathrm dx=0.\]
\end{lemm}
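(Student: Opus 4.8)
The statement is a cleaner, more quotable version of the vanishing computed inside the previous proof of Lemma~\ref{lemm:prec}, now phrased for the integral over $\mathfrak o_F\setminus(\varpi)$ rather than over a single residue class. The natural route is therefore to reduce to that lemma by partitioning $\mathfrak o_F\setminus(\varpi)$ into residue classes and applying Lemma~\ref{lemm:prec} class by class.

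First I would write
\[ \int_{\mathfrak o_F\setminus(\varpi)} \Phi(x)\psi(ax^d)\,\mathrm dx
 = \sum_{\xi} \int_{\xi+\varpi^{n(\Phi)}\mathfrak o_F} \Phi(x)\psi(ax^d)\,\mathrm dx, \]
the sum running over a set of representatives~$\xi\in\mathfrak o_F\setminus(\varpi)$ of the residue classes modulo~$\varpi^{n(\Phi)}$. Since $\Phi$ is constant on each such class by definition of~$n(\Phi)$, each term becomes
\[ \Phi(\xi)\int_{\xi+\varpi^{n(\Phi)}\mathfrak o_F} \psi(ax^d)\,\mathrm dx, \]
so it suffices to show each of these integrals vanishes under the stated hypothesis on $\abs a$. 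For that I apply Lemma~\ref{lemm:prec} with $n=n(\Phi)$: its hypothesis requires $q^{n(\Phi)+c}<\abs a\le q^{2n(\Phi)}$, together with $\xi\in\mathfrak o_F\setminus(\varpi)$ (which holds by the choice of representatives). The lower bound $\abs a>q^{n(\Phi)+c}$ is implied by $\abs a\ge q^{n(\Phi)+c+1}$, one of the hypotheses.

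The one genuine wrinkle — and the step I expect to require the small extra hypothesis $\abs a\ge q^{2(c+1)}$ — is the \emph{upper} bound $\abs a\le q^{2n(\Phi)}$ demanded by Lemma~\ref{lemm:prec}, which is \emph{not} assumed here; indeed the conclusion is supposed to hold for all large $\abs a$. The fix is that Lemma~\ref{lemm:prec} is really scale-invariant: if $\abs a>q^{2n(\Phi)}$, one simply enlarges the radius, i.e.\ partitions $\mathfrak o_F\setminus(\varpi)$ into residue classes modulo $\varpi^N$ for a larger $N$ chosen with $q^{N+c}<\abs a\le q^{2N}$ (possible precisely because $\abs a\ge q^{2(c+1)}$ guarantees such an $N\ge n(\Phi)$ exists: the intervals $(q^{N+c},q^{2N}]$ for $N\ge c+2$ cover $(q^{2(c+1)},\infty)$ since consecutive ones overlap once $2N\ge (N+1)+c$, i.e.\ $N\ge c+1$). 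On a residue class modulo $\varpi^N$ with $N\ge n(\Phi)$, $\Phi$ is still constant, so the same reduction applies and Lemma~\ref{lemm:prec} now fires with $n=N$. Thus in every case $\abs a\ge\max(q^{n(\Phi)+c+1},q^{2(c+1)})$ the integral splits into a sum of terms each killed by Lemma~\ref{lemm:prec}, and the conclusion follows. The main obstacle is purely bookkeeping: checking that a valid radius $\varpi^N$ exists in the regime not covered directly by the first lemma, which is exactly what the second hypothesis $\abs a\ge q^{2(c+1)}$ is there to ensure.
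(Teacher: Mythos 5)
Your proof is correct and follows essentially the same route as the paper: decompose $\mathfrak o_F\setminus(\varpi)$ into residue classes modulo $\varpi^N$ on which $\Phi$ is constant and apply Lemma~\ref{lemm:prec} term by term, the two hypotheses on $\abs a$ being exactly what is needed for a modulus $N\geq n(\Phi)$ with $q^{N+c}<\abs a\leq q^{2N}$ to exist (the paper, writing $\abs a=q^m$, simply takes $N=m-c-1$). The only blemish is an off-by-one in your covering claim at the boundary value $\abs a=q^{2(c+1)}$, which requires $N=c+1$ rather than $N\geq c+2$; this does not affect the argument.
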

\begin{proof}
Let $n$ be any integer such that $n\geq n(\Phi)$; we can write
\[ \int_{\mathfrak o_F\setminus(\varpi)}
\Phi(x)\psi(ax^d)\,\mathrm dx =\sum_{\substack{\xi \mod\varpi^n\\ \xi\neq 0\pmod\varpi}}
 \Phi(\xi) \int_{\xi+\varpi^n\mathfrak o_F} \psi(a x^d)\,\mathrm dx. \]
We shall prove in a moment that each term in this sum is zero
provided $q^{n+c}<\abs a\leq q^{2n}$.
Let us then set $\abs a=q^m$; 
these conditions
($ n+c+1\leq m\leq 2n$ and $n\geq n(\Phi)$)
are equivalent to ($ \frac12 m \leq n\leq m-c-1$ and $n\geq n(\Phi)$).
There exists such an integer~$n$ if and only if $n=m-c-1$  is a solution,
\ie, $m\geq 2(c+1)$ and $m\geq n(\Phi)+c+1$.

It remains to establish that for any $\xi\in\mathfrak o_F\setminus(\varpi)$,
the integral
\[ I= \int_{\xi+\varpi^n\mathfrak o_F} \psi( a  x^d)\,\mathrm dx \]
vanishes if $q^{n+c}<\abs a\leq q^{2n}$.
Making the change of variables $x=\xi(1+\varpi^n u)$, we obtain
\[ I=q^{-n} \int_{\mathfrak o_F}  \psi( a \xi^d(1+\varpi^n u)^d )\,\mathrm du.\]
Without loss of generality, we can also assume $\xi=1$, replacing
$ a $ by~$ a \xi^d$.
Now,
\[  a (1+\varpi^n u)^d= a   +\binom d1  a \varpi^n u
+\binom d2 a  \varpi^{2n}u^2
+ \dots +\binom dd a  \varpi^{dn}u^d. \]
Assume, as in the statement of the lemma,
that $q^{n+c}<\abs a\leq q^{2n}$.
Then, $ a \varpi^{2n}$ belongs to~$\mathfrak o_F$,
and all terms from the third one on are elements of~$\mathfrak o_F$,
so that
\[ \psi( a (1+\varpi^n u)^d)=\psi( a  ) \psi( d  a \varpi^n u)\]
for any $u\in\mathfrak o_F$. In that case,
\[ I=q^{-n} \psi( a ) \int_{\mathfrak o_F} \psi(d a \varpi^n u)\,\mathrm du. \]
This is the integral of an additive character of~$\mathfrak o_F$,
so vanishes if and only if this character is non-trivial.
By definition of the different, this happens precisely
if $d a \varpi^n\not\in\mathfrak d_F^{-1}$, which is true
since 
$\abs a  >q^n/\abs{d\mathfrak d_F}=q^{n+c}$.
Consequently, $I=0$, as claimed.
\end{proof}

\begin{prop}\label{lemm.weyl-s}
Let $F$ be a local field and $\Phi$ a 
Schwartz function on~$F$. Let $d$ be a positive integer.
For  any complex number~$s$ such that $\sigma =\Re(s)>0$, let
\[ \kappa(s)=\min\big(\frac 12,\frac{\sigma }d\big). \]
Then, as as $\abs a\ra \infty$ and $\sigma >0$, we have
\[\mathscr I(\Phi, a, d, s)  = 
\int_F \abs x_F^{s-1} \psi(a x^d) \Phi(x)\,\mathrm dx 
\ll \zeta_F(\sigma ) \min\big(1,\abs a_F^{-\kappa(s)}\big).\]
The bound is uniform when~$\Phi$ belongs to a fixed bounded subset
of the space of Schwartz functions and $\sigma >0$ is bounded from above.
\end{prop}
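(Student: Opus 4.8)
The plan is to decompose $F$ into dyadic shells around the origin, treat the shell near $0$ using the weight $|x|^{s-1}$ and the shells away from $0$ using the oscillation of the phase $x\mapsto ax^d$, and then sum a geometric series. For each $k$ put $A_k=\{x\in F:|x|\asymp\rho_k\}$ with $\rho_k=2^{-k}$ if $F$ is archimedean and $\rho_k=q^{-k}$ otherwise; since $\Phi$ is compactly supported only $\rho_k\lesssim1$ occur, and for $\Phi$ ranging over a fixed bounded subset of $\mathscr S(F)$ the common support, the common modulus of continuity (ultrametric case) and the uniform derivative bounds (archimedean case) ensure that all constants below are uniform. The basic metric input is the inequality $\int_{|x|\le R}|x|^{\Re(s)-1}\,dx\le R^{\Re(s)}\zeta_F(\Re(s))$, valid uniformly for $0<\Re(s)\le M$ (a direct computation in each of the three cases, using the normalizations fixed above); consequently $\int_{|x|\le R}|x|^{\Re(s)-1}|\Phi(x)|\,dx\le\|\Phi\|_\infty R^{\Re(s)}\zeta_F(\Re(s))$. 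Taking $R$ to be the radius of the support of $\Phi$ gives at once $\mathscr I(\Phi,a,d,s)\ll\zeta_F(\Re(s))$, the ``$1$'' in the $\min$; taking $R=|a|^{-1/d}$ shows that the part of $\mathscr I$ supported in $\{|x|\le|a|^{-1/d}\}$ is $\ll\zeta_F(\Re(s))\,|a|^{-\Re(s)/d}$, which is $\ll\zeta_F(\Re(s))\,|a|^{-\kappa(s)}$. It remains to bound the shells $A_k$ with $\rho_k>|a|^{-1/d}$, on each of which the phase derivative has absolute value $\gg|a|\rho_k^{d-1}\gg|a|^{1/d}\to\infty$.

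On such a shell, rescaling $x=\rho_k y$ with $|y|\asymp1$ writes the $A_k$-contribution as $\rho_k^{s}\int_{|y|\asymp1}|y|^{s-1}\psi(b_k y^d)\Phi(\rho_k y)\,dy$, where $|b_k|=|a|\rho_k^{d}\ge1$: an oscillatory integral over a fixed compact set with a smooth amplitude whose derivatives are controlled. In the archimedean case I would apply van der Corput's second-derivative estimate (the second derivative of $y\mapsto b_k y^d$ is $\asymp|b_k|$ on $|y|\asymp1$), obtaining an $A_k$-bound $\ll|b_k|^{-1/2}\rho_k^{\Re(s)}$; combined with the trivial bound $\ll\rho_k^{\Re(s)}$ and summed over $\rho_k>|a|^{-1/d}$, the resulting geometric series is dominated by one of its endpoints according to the sign of $\Re(s)-d/2$ and produces $\ll|a|^{-\min(1/2,\Re(s)/d)}$. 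In the ultrametric case the amplitude $u\mapsto\Phi(\rho_k u)$ is locally constant of a fixed finite level; partitioning $\mathfrak o_F^*$ into cosets on which it is constant and whose level may be taken as fine as desired, Lemma~\ref{lemm:prec} (with $a$ replaced by $b_k$) makes every coset integral, hence the whole $A_k$-contribution, vanish as soon as $|b_k|$ exceeds a bound depending only on $F$, $d$ and the level. Thus all but $O(1)$ of the shells drop out — namely those on which $|b_k|$ is bounded — and on the remaining $O(1)$ shells the coset integrals are trivially $O(1)$, which (since $|b_k|\asymp|a|$ there) contributes $\ll|a|^{-1/2}$. Together with the near-origin estimate this yields $|\mathscr I(\Phi,a,d,s)|\ll\zeta_F(\Re(s))\,|a|^{-\min(1/2,\Re(s)/d)}$, and with the ``$1$'' from the first paragraph, $\ll\zeta_F(\Re(s))\min(1,|a|^{-\kappa(s)})$, with the asserted uniformity.

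The main obstacle is the ultrametric analysis: one must push Lemma~\ref{lemm:prec} through the dyadic decomposition carefully, tracking how the fixed modulus of continuity of $\Phi$ caps the fineness of the admissible coset decomposition on the shells lying in the support of $\Phi$, so that precisely those finitely many shells escape the vanishing mechanism and are what forces the exponent down to $1/2$ rather than the naive $\Re(s)/d$. A secondary but unavoidable nuisance is the uniformity bookkeeping: one must check that all implied constants stay bounded as $\Re(s)\to0^+$ — which is exactly what the factor $\zeta_F(\Re(s))$ absorbs, via the uniform inequality for $\int_{|x|\le R}|x|^{\Re(s)-1}\,dx$ and the uniform behaviour $1/(1-q^{-\Re(s)})\asymp\zeta_F(\Re(s))$ of the geometric sums — and as $\Re(s)\to M$, uniformly over the bounded family of test functions and over both types of local field, and in particular that the borderline $\Re(s)=d/2$, where the weight-dominated and oscillation-dominated regimes balance, is handled without a spurious logarithmic loss.
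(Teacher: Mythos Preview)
Your overall architecture---split at $|x|\approx|a|^{-1/d}$, trivial bound near~$0$, oscillatory bound away from~$0$, sum the shells---is exactly the paper's strategy, and for the ultrametric case your use of Lemma~\ref{lemm:prec} on cosets is precisely what the paper packages as Lemma~\ref{lemme2}. But two points need repair.

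\medskip

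\textbf{Ultrametric accounting.} Your last sentence on the non-archimedean case is garbled. The shells that \emph{survive} Lemma~\ref{lemm:prec} are those with $|b_k|=|a|\rho_k^{d}$ below a fixed threshold~$C$ (depending on~$d$, the different, and the level~$n(\Phi)$); on such a shell $\rho_k\le (C/|a|)^{1/d}$, so its contribution $\rho_k^{\Re(s)}\cdot O(1)$ is $\ll|a|^{-\Re(s)/d}$, not $|a|^{-1/2}$. The parenthetical ``since $|b_k|\asymp|a|$ there'' is backwards: on the surviving shells $|b_k|$ is \emph{bounded}, while $|b_k|\asymp|a|$ occurs near $\rho_k\approx 1$, which is exactly where the vanishing mechanism works. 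With this corrected you in fact recover the paper's sharper exponent $\Re(s)/d$ in the ultrametric case (cf.\ the Remark following the statement); the~$1/2$ is needed only for~$\C$.

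\medskip

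\textbf{The complex place.} Van der Corput's second-derivative lemma is a one-dimensional estimate. For $F=\C$ your shell integral is over a two-real-dimensional annulus with phase $\Re(b_k y^d)$, and this phase is degenerate along the directions where $\cos(d\theta+\arg b_k)$ vanishes; you cannot simply assert a uniform $|b_k|^{-1/2}$ bound. The paper handles this by passing to polar coordinates, applying the \emph{real} estimate in the radial variable with effective parameter $|b_k|\cos(d\theta+\arg b_k)$, and then integrating over~$\theta$; it is precisely the near-vanishing of the cosine on a set of angular measure~$\asymp\omega^{-1}$ that forces the exponent down to $\min(1,2\Re(s)/d)$ in the usual absolute value, i.e.\ to $\min(1/2,\Re(s)/d)$ in the normalised one. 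Your proposal does not address this, and without it the complex case is a genuine gap. (For $F=\R$ the paper uses a single integration by parts rather than van der Corput, which again yields the sharper exponent $\Re(s)/d$; your second-derivative route is fine there for $d\ge2$, but note that for $d=1$ the second derivative vanishes and you must fall back on the non-stationary-phase bound.)
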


\begin{rema}
In fact, except when $F=\C$,
we prove the proposition with $\kappa(s)$ replaced by $\min(1,\sigma /d)$. 
\end{rema}

\begin{proof}
To simplify, we put $I(a)=\mathscr I(\Phi,a,d,s)$.
We may assume that $\Phi$ is real-valued.
By a change of variables, we also assume that $\Phi$ is zero outside of the unit ball.

\medskip

\emph{The case $F=\R$.} 
We have $\psi(x)=\exp(-2\pi \mathrm ix)$.
We introduce a parameter~$\eps>0$ and split the integral:
\[
I(a) = \int_{-\eps}^{\eps} \abs x^{s-1} \psi(ax^d) \Phi(x)\,\mathrm dx
 + \int_{\abs x\geq\eps} \abs x^{s-1} \psi(ax^d) \Phi(x)\,\mathrm dx
.\]
The first integral is bounded from above as
\[ \abs{ \int_{-\eps}^{\eps} \abs x^{s-1} \psi(ax^d) \Phi(x)\,\mathrm dx}
\leq \eps^{\sigma } \norm{\Phi}_\infty \zeta_\R(\sigma ). \]
Integration by parts in the second integral yields
\begin{align*} \int_{\eps}^\infty x^{s-1}\psi(ax^d)\Phi(x)\,\mathrm dx
& = \int_{\eps}^\infty x^{s-d}\Phi(x)\,  x^{d-1}\psi(ax^d) \,\mathrm dx \\
& = \left[ \frac{-1}{2\pi\mathrm i d a}  x^{s-d}\Phi(x) \psi(ax^d) \right]_{\eps}^\infty \\
 + \frac1{2\pi\mathrm i d a} & \int_{\eps}^\infty x^{s-d-1}  \left( (s-d)\Phi(x)+x\Phi'(x)\right)
  \psi(ax^d)\mathrm dx.
\end{align*}
Consequently, its absolute value is bounded from above by
\begin{multline*}
 \frac1{2\pi d \abs a} \left(\eps^{\sigma -d}+1\right) \norm\Phi_\infty
         + \frac1{2\pi d \abs a} \int_{\eps}^1 x^{\sigma -d-1} \left( \abs{\sigma -d}\abs\Phi+ x\abs{\Phi'}\right)\,
\mathrm dx 
\\
\ll (2\pi d\abs a)^{-1} \left( \eps^{\sigma-d} \norm\Phi_\infty + \norm\Phi_\infty
                  \eps^{\sigma-d} (\norm\Phi_\infty +\norm{\Phi'}_\infty) \right). \end{multline*}
We have a similar upper-bound for the integral from~$-\infty$ to~$-\eps$.

Fix $\eps$ so that $\eps^d \abs a=1$. Adding the obtained estimates, we have
\begin{align*}
 \abs{I(a)} & \ll
       \left(\abs{a}^{-1}+\abs{a}^{-\sigma /d}\right) 
             \zeta_\R(\sigma ) 
             \left( \norm\Phi_\infty + \norm{\Phi'}_\infty \right) \\
& \ll \abs a^{-\kappa(s)} 
             \zeta_\R(\sigma ) 
             \left( \norm\Phi_\infty + \norm{\Phi'}_\infty \right) 
\end{align*}
where the constant understood under~$\ll$ is absolute.

\medskip

\emph{The case $F=\C$.}
Note that in this case, the modulus~$\abs\cdot_{\C}$
is the square of the usual absolute value, which is
used to bound $I(a)$ from above.
Recall that
\[
\psi(u)=\exp(-2\mathrm i\pi \Re(u))\quad \text{ for  } \quad u\in\C;
\]
we write $a=\omega\exp(\mathrm i\alpha)$ with
$\omega=\abs a$ and $\alpha\in\R$. Similarly, put
$z=r\exp(\mathrm i\theta)$, so that $\abs z_\C=r^2$ and
\[ \psi(az^d) = \exp\big(-2\mathrm i\pi \omega r^d \cos(d\theta+\alpha)\big). \]
Using polar coordinates, we have
\[ I(a)=\int_0^{2\pi} \mathrm d\theta \int_0^\infty \exp(2\mathrm i\pi\omega r^d \cos(d\theta+\alpha)) \Phi(r\exp(\mathrm i\theta))\, r^{2s-1}\,\mathrm dr. \]
We write $I(a;\theta)$ for the inner integral; 
the result we proved for $F=\R$  (with~$s$ replaced by~$2s$)
implies that
\[ \abs{I(a;\theta)}  \ll   \omega^{-2\sigma /d} \abs{\cos(d\theta+\alpha)}^{-2\sigma /d}  \zeta_\R(2\sigma) ;\]
moreover, the trivial inequality $\abs{I(a;\theta)}\ll \zeta_\R(2\sigma)$ holds.
We now integrate the better of these upper bounds over~$\theta\in[0;2\pi]$.
The angles~$\theta$ such that $\abs{\cos(d\theta+\alpha)}\leq 1/\omega$ 
form a union of intervals of lengths~$\approx 1/\omega$; 
the integral over these will be $\ll 1/\omega$. 
When $\omega\ra\infty$, 
the integral over the remaining angles grows as 
$\omega^{-2\sigma /d} \int_{1/\omega}^1 u^{-2\sigma /d}\,\mathrm du$, that is:
\[ \omega^{-2\sigma /d} \max(1,\omega^{-1+2\sigma /d}) = \max(\omega^{-2\sigma /d}, \omega^{-1}). \]
Finally, since $\zeta_\R(2\sigma)=1/\sigma = \zeta_\C(\sigma)/2\pi$,
we get 
\[ \abs{I(a)} \ll \zeta_\C(\sigma)\abs  a^{-\min(1,2\sigma /d)}=\zeta_\C(\sigma) \abs a_\C^{-\kappa(s)}, \]
as claimed.

\medskip

\emph{The case when $F$ is non-archimedean.}
Let $\varpi$ be a generator
of the maximal ideal of~$\mathfrak o_F$ and let $q=\abs{\varpi}^{-1}
=\Card(\mathfrak o_F/(\varpi))$.
As above, we assume that $\Phi$ is real-valued and that its support
is contained in~$\mathfrak o_F$. Let $n(\Phi)$ be
the least positive integer~$n$ such that $\Phi$ is constant 
on residue classes modulo~$\varpi^n$.
For any nonnegative integer~$k$, let $\Phi_k$ be the Schwartz
function on~$\mathfrak o_F$ defined by $\Phi_k(u)=\Phi(\varpi^ku)$;
one has $n(\Phi_k)=\max(n(\Phi)-k,1)$.
For some nonnegative integer~$K$ (to be chosen later), we write
\begin{align*}
 I(a) & =\int_F\abs x^{s-1}\psi(ax^d)\Phi(x)\,\mathrm dx \\
& =\sum_{k=0}^{K-1}  \int_{(\varpi^k)\setminus(\varpi^{k+1})}\! \!\!\!
\abs x^{s-1}\psi(ax^d)\Phi(x)\,\mathrm dx
+\!\int_{(\varpi^K )} \abs x^{s-1}\psi(ax^d)\Phi(x)\,\mathrm dx\\
& =\sum_{k=0}^{K-1} q^{-sk}
\int_{\mathfrak o_F\setminus(\varpi)}
 \psi(a\varpi^{kd} x^d) \Phi_k(x)\,\mathrm dx \\
&\quad\quad\quad\quad \quad\quad\quad\quad\quad\quad\quad\quad 
+ q^{-sK} \int_{\mathfrak o_F}\abs x^{s-1} \psi(a\varpi^{Kd} x^d) \Phi_K(x)\,\mathrm dx.\\
\end{align*}
Let $m$ be such that $\abs a=q^m$.
By Lemma~\ref{lemme2} above, the term corresponding to the index~$k$
vanishes if 
\[ \abs {a\varpi^{kd}} \geq \max(q^{2c+2},q^{n(\Phi_k)+c+1}), \]
that is if 
\[ m-kd \geq \max(2c+2,n(\Phi)-k+c+1,c+2)=\max(2c+2,n(\Phi)+c+1-k). \]
Since $d\geq 1$, the right hand side decreases slower than left hand side,
and this holds for all $k\in\{0,\dots,K-1\}$ if it holds for~$k=K-1$,
that is if
\[ m-(K-1)d\geq 2c+2\quad\text{and}\quad m-(K-1)d\geq n(\Phi)+c+1-(K-1), \]
in other words
\[ (K-1) d \leq m-2c-2 \quad\text{and}\quad (K-1) (d-1)\leq m-n(\Phi)-c-1.\]
We choose $K$ to be the largest integer satisfying these two
inequalities, namely
\[ K= 1+\min\left( \left\lfloor \frac{m-2c-2}d\right\rfloor,
          \left\lfloor \frac{m-n(\Phi)-c-1}{d-1}\right\rfloor \right). \]
This integer is positive when $m\geq\min(2c+2,n(\Phi)+c+1)$.
In that case,
\[ I(a)=q^{-sK} \int_{\mathfrak o_F}\psi(a\varpi^{Kd}x^d)\abs x^{s-1}\Phi_K(x)\,\mathrm dx \]
so that
\[ \abs{I(a)} \leq q^{-K\sigma } \norm{\Phi}_\infty \int_{\mathfrak o_F} \abs x^{\sigma -1}\,\mathrm dx
\leq q^{-K\sigma } \norm{\Phi}_\infty \zeta_F(\sigma ), \]
hence
\[\abs{I(a)} 
\leq \zeta_F(\sigma ) \norm{\Phi}_\infty \max\left( q^{2(c+1)\sigma/d} \abs a^{-\sigma/d}, q^{(n(\Phi)+c+1)\sigma/(d-1)} \abs a^{-\sigma/(d-1)}\right). \]
As a consequence, when $\abs a\geq 1$, we find
\[ \abs{I(a)} \leq \zeta_F(\sigma )\norm{\Phi}_\infty 
\max( q^{2(c+1)\sigma/d} , q^{(n(\Phi)+c+1)\sigma/(d-1)}) 
\abs{a}^{-\sigma/d}. \]
Since $\abs{I(a)}\leq\zeta_F(\sigma )\norm{\Phi}_\infty$
for any $a\in F$, we conclude that
\begin{equation}
 \abs{I(a)} \leq\zeta_F(\sigma )\norm{\Phi}_\infty
 \max( q^{2(c+1)\sigma/d} , q^{(n(\Phi)+c+1)\sigma/(d-1)}) 
 \max(1,\abs a)^{- \sigma/d}. \end{equation}
\end{proof}


We will need the following 
higher-dimensional generalization of Proposition~\ref{lemm.weyl-s}.
\begin{prop}\label{lemm.weylII-s}
Let $F$ be a local field and $\Phi$ a 
Schwartz function on~$F^n$. Let $n\geq 1$ and let $d_1,\dots,d_n$ be
positive integers. For $s_1,\dots,s_n\in \C$ set 
\[ \kappa(s)=\min\big(1/2,\Re(s_1)/d_1, \dots, \Re(s_n)/d_n \big).\] 
Assume that $\kappa(s)>0$. 
Then, as $\abs a\ra \infty$,
\begin{eqnarray*}
\int_{F^n} \abs{x_1}^{s_1-1} \abs{x_2}^{s_2-1}\dots\abs{x_n}^{s_n-1} 
\psi(a x_1^{d_1}\dots x_n^{d_n}) \Phi(x)\,\mathrm dx \ll 
\\
\min(1,\abs a^{-\kappa(s)})
\prod_{j=1}^n\zeta_F(\Re(s_j)). 
\end{eqnarray*}
This bound is uniform when~$\Phi$ ranges over a bounded subset
of the space of Schwartz functions on~$F^n$ and all $\Re(s_j)\in\R_+^*$ are
bounded from above. 
\end{prop}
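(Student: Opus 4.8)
The plan is to reduce the $n$-dimensional oscillatory integral to the one-dimensional estimate of Proposition~\ref{lemm.weyl-s} by integrating over the variables one at a time, treating the product $x_1^{d_1}\cdots x_n^{d_n}$ as a monomial in a single distinguished variable while holding the others fixed. Concretely, I would first isolate the variable~$x_1$ (or, to be safe, the variable $x_j$ achieving the minimum defining $\kappa(s)$, though by symmetry of the argument it does not matter which). Write the integral as
\[
\int_{F^{n-1}} \Big( \prod_{j=2}^n \abs{x_j}^{s_j-1} \Big)
\Big( \int_F \abs{x_1}^{s_1-1}\psi\big( (a x_2^{d_2}\cdots x_n^{d_n}) x_1^{d_1}\big)\Phi(x_1,\dots,x_n)\,\mathrm dx_1 \Big)
\prod_{j=2}^n \mathrm dx_j,
\]
so that the inner integral is exactly $\mathscr I(\Phi_{x'},a',d_1,s_1)$ with $a' = a x_2^{d_2}\cdots x_n^{d_n}$ and $\Phi_{x'}=\Phi(\cdot,x_2,\dots,x_n)$ a Schwartz function of~$x_1$ depending on the parameters $x' = (x_2,\dots,x_n)$, ranging in a bounded subset of $\mathscr S(F)$ as $x'$ stays in a fixed compact set (which we may assume since $\Phi$ has compact support). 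By Proposition~\ref{lemm.weyl-s} the inner integral is $\ll \zeta_F(\Re(s_1))\min(1,\abs{a'}^{-\kappa_1})$ with $\kappa_1 = \min(1/2,\Re(s_1)/d_1)$.

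Substituting this bound and taking absolute values, the problem becomes to estimate
\[
\zeta_F(\Re(s_1)) \int_{F^{n-1}} \Big( \prod_{j=2}^n \abs{x_j}^{\Re(s_j)-1}\Big)
\min\big(1,\abs a^{-\kappa_1}\abs{x_2}^{-\kappa_1 d_2}\cdots \abs{x_n}^{-\kappa_1 d_n}\big)
\,\Psi(x')\, \prod_{j=2}^n\mathrm dx_j,
\]
where $\Psi$ is a compactly supported bound for the sup-norm data of $\Phi$. The key observation is that on the support, each $\abs{x_j}$ is bounded, so $\abs{x_j}^{-\kappa_1 d_j}\geq$ a positive constant, and hence the $\min$ is still $\ll \min(1,\abs a^{-\kappa_1}\cdot(\text{powers of }\abs{x_j}))$; more usefully, I would split the region of integration according to whether $\abs{x_2}^{d_2}\cdots\abs{x_n}^{d_n}\gtrless \abs a^{-1/2}$ or against suitable thresholds, using the trivial bound $\min\le 1$ where the $x_j$ are tiny and the decay bound $\min \le \abs a^{-\kappa_1}\prod\abs{x_j}^{-\kappa_1 d_j}$ elsewhere. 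In the region where the decay bound applies, the integral in $x_j$ is $\int_{\abs{x_j}\le C}\abs{x_j}^{\Re(s_j)-1-\kappa_1 d_j}\,\mathrm dx_j$, which converges precisely when $\Re(s_j) > \kappa_1 d_j$; this is \emph{not} guaranteed, so instead of pushing the full exponent $\kappa_1$ through, I would only extract an exponent $\kappa_1' \le \kappa_1$ small enough that $\Re(s_j)-\kappa_1' d_j > 0$ for every remaining~$j$ --- and $\kappa_1' = \kappa(s)$ works, since $\kappa(s)\le \Re(s_j)/d_j$ for all $j$, so $\Re(s_j) - \kappa(s) d_j \ge 0$ (a strict inequality after a harmless shrinking, or handled directly since $\zeta_F$ absorbs the boundary logarithm in the archimedean case and the geometric sum in the non-archimedean case). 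This yields $\int_{\abs{x_j}\le C}\abs{x_j}^{\Re(s_j)-1-\kappa(s)d_j}\,\mathrm dx_j \ll \zeta_F(\Re(s_j))$, and collecting the factor $\abs a^{-\kappa(s)}$ from outside gives the claimed bound $\ll \min(1,\abs a^{-\kappa(s)})\prod_{j=1}^n\zeta_F(\Re(s_j))$, with the trivial bound $\ll \prod\zeta_F(\Re(s_j))$ handling small $\abs a$. Uniformity in $\Phi$ over a bounded set and in the $\Re(s_j)$ bounded above follows because every invocation of Proposition~\ref{lemm.weyl-s} is already uniform in that sense, and the remaining constants depend only on $n$, the $d_j$, the size of the common compact support, and upper bounds for the $\Re(s_j)$.

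The main obstacle, as indicated, is the \emph{bookkeeping of exponents}: one cannot simply propagate the full one-variable decay exponent $\kappa_1=\min(1/2,\Re(s_1)/d_1)$ through the remaining integrations, because doing so would demand $\Re(s_j)>\kappa_1 d_j$, which can fail. The resolution is to immediately throttle the extracted exponent down to the global $\kappa(s)=\min(1/2,\Re(s_1)/d_1,\dots,\Re(s_n)/d_n)$, at which point every residual one-variable integral $\int\abs{x_j}^{\Re(s_j)-1-\kappa(s)d_j}\mathrm dx_j$ is (essentially) convergent with value controlled by $\zeta_F(\Re(s_j))$; care is needed at the boundary case $\kappa(s)d_j=\Re(s_j)$, where the integral of $\abs{x_j}^{-1}$ diverges logarithmically --- this is circumvented either by noting that the minimum is attained at a single index $j_0$ and handling $j_0$ separately (it is the distinguished variable anyway if we chose wisely), or by a standard $\eps$-room argument combined with the observation that $\zeta_F(\Re(s_j))$ already carries the requisite pole. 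A secondary, purely technical point is verifying that $x' \mapsto \Phi(\cdot,x')$ traces out a bounded subset of $\mathscr S(F)$ as $x'$ ranges over the (compact) projection of $\operatorname{supp}\Phi$; this is immediate from the definition of bounded subsets of $\mathscr S(F)$ recalled above, since all the relevant derivatives of $\Phi$ are themselves Schwartz, hence bounded, and in the ultrametric case local constancy of $\Phi$ on balls of radius $\delta$ in $F^n$ gives local constancy of each slice on balls of radius $\delta$ in $F$.
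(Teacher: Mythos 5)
Your opening move --- Fubini plus the one--variable Proposition~\ref{lemm.weyl-s} applied to a distinguished variable, with the uniformity clause of that proposition justifying the dependence of the slice $\Phi(\cdot,x')$ and of the effective parameter $a'=ax_2^{d_2}\cdots x_n^{d_n}$ on the remaining variables --- is sound, and your check that the slices form a bounded subset of~$\mathscr S(F)$ is correct. The gap is in the step where the argument must close: bounding (with $\sigma_j=\Re(s_j)$ and $\kappa_1=\min(1/2,\sigma_1/d_1)$)
\[ \int \prod_{j\ge 2}\abs{x_j}^{\sigma_j-1}\,\min\Bigl(1,\abs a^{-\kappa_1}\prod_{j\ge 2}\abs{x_j}^{-\kappa_1 d_j}\Bigr)\,\mathrm dx' \]
by $\min(1,\abs a^{-\kappa(s)})\prod_{j\ge2}\zeta_F(\sigma_j)$ \emph{uniformly}. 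Your throttling device $\min(1,M)\le M^{\kappa(s)/\kappa_1}$ does extract the exponent $\abs a^{-\kappa(s)}$, but the residual integrals $\int_{\abs{x_j}\le C}\abs{x_j}^{\sigma_j-1-\kappa(s)d_j}\,\mathrm dx_j$ are of size $(\sigma_j-\kappa(s)d_j)^{-1}$, which is not $\ll\zeta_F(\sigma_j)$: for $n=2$, $d_1=d_2=1$, $\sigma_1=0.3$, $\sigma_2=0.3+\eta$, one has $\kappa(s)=0.3$ and your bound degenerates like $1/\eta$ while the right-hand side of the Proposition stays bounded. So the claimed uniformity fails (and at $\eta=0$ convergence itself fails); picking the distinguished variable to be ``the'' minimizer does not help when the minimum is attained, or nearly attained, at several indices, and the $\eps$-room variant proves only the weaker exponent $\kappa(s)-\eps$. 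The alternative you sketch --- splitting the $(n-1)$-dimensional domain ``against suitable thresholds'' --- is left unspecified, and the one threshold you name ($\prod\abs{x_j}^{d_j}\gtrless\abs a^{-1/2}$) loses half the exponent on the region where one remaining variable is of order~$1$ and another is tiny.

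The paper sidesteps all of this by reversing the order of integration: the inductive hypothesis is applied to the \emph{inner} block $(x_2,\dots,x_n)$ with oscillation parameter $ax_1^{d_1}$, so that at every stage only one variable remains to be split, at the single threshold $\abs{x_1}=\abs a^{-1/d_1}$. The piece $\abs{x_1}\le\abs a^{-1/d_1}$ contributes $\abs a^{-\sigma_1/d_1}\zeta_F(\sigma_1)$ outright, and the complementary piece is organized as the divided difference $\bigl(\abs a^{-\kappa(s')}-\abs a^{-\sigma_1/d_1}\bigr)/(\sigma_1-d_1\kappa(s'))$, which a mean-value argument bounds by $\abs a^{-c}$ with $c$ between $\kappa(s')$ and $\sigma_1/d_1$, hence $c\ge\kappa(s)$, with no $(\sigma_1-d_1\kappa(s'))^{-1}$ blow-up as the two exponents approach each other. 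If you wish to keep your order of integration, you must process the remaining $n-1$ variables by the same one-at-a-time splitting, which amounts to rediscovering that induction.
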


\begin{proof}
We may assume that $\Phi$ is real-valued and that its support is 
contained in the unit polydisk.  For $j\in\{1,\dots,n\}$, set $\sigma_j=\Re(s_j)$.
Let $I(a)$ be this integral. If $\abs a\leq 1$, we bound
the integral from above by the integral of its absolute value,
replacing ~$\psi$ by~$1$. Let us assume
that $\abs a\geq 1$.
For $n=1$, the claim follows from Proposition~\ref{lemm.weyl-s}.
By induction, we assume the result is known for
~$<n$ variables. Writing $s=(s_1,s')$, we obtain
\[ I(a)\ll \prod_{j=2}^n \zeta_F(\sigma_j)
           \int_{\abs{x_1}\leq 1} \abs {x_1}^{\sigma_1-1} 
       \max(1,\abs{ax_1^{d_1}}^{-\kappa(s')})\,\mathrm d x_1. \]
We split this integral according to whether or not $\abs{x_1}\leq\abs a^{-1/d_1}$. 
Put $\sigma=\sigma_1$, $d=d_1$, and $\kappa=\kappa(s')$. We have
\[ \int_{\abs {x}\leq \abs a^{-1/d}} \abs x^{\sigma-1}\,\mathrm dx 
 = \abs a^{-\sigma/d} \zeta_F(\sigma).\]
The second integral equals
\[ \abs a^{-\kappa} \int_{\abs a^{-1/d}\leq \abs x\leq 1 } 
               \abs x^{\sigma-d\kappa-1} \,\mathrm dx. \]
If $F=\R$ or~$\C$, we use polar coordinates and obtain,  up to the measure of~$\mathfrak o_F^*$,
\[ \abs a^{-\kappa} \int_{\abs a^{-1/d}}^1  r^{\sigma-d\kappa -1}\,\mathrm dr
= \frac{\abs a^{-\kappa}-\abs a^{-\sigma/d}}{\sigma-d\kappa}
= \frac 1d \abs a^{-c}, \]
for some $c\in (\kappa,\sigma/d)$. In particular,
\[ c\geq \min ( \kappa(s'), \frac{\sigma_1}{d_1})
         = \min\big( \frac{\sigma_1}{d_1},\dots,\frac{\sigma_n}{d_n}\big)
=\kappa(s). \]
When $F$ is non-archimedean, an analogous inequality holds, with
the real integral replaced by a geometric series.
Combining the inequalities, we have
\[ I(a) \ll \abs a^{-\sigma_1/d_1} \prod_{j=1}^n \zeta_F(\sigma_j)   
       + \abs a^{-\kappa(s)}  \prod_{j=1}^n \zeta_F(\sigma_j)
\ll \prod_{j=1}^n \zeta_F(\sigma_j)   \abs a^{-\kappa(s)}, \]
as claimed.
\end{proof}

\subsection{Igusa integrals with rapidly oscillating phase}

\begin{prop}
\label{lemm.irregular}
Let $\Phi\colon F\times \C
\ra\C$ a function such that the functions $s\mapsto \Phi(x,s)$
are holomorphic for any $x\in F$. Assume that
the functions $x\mapsto\Phi(x,s)$
belong to a bounded subset of the space of smooth compactly supported functions
when $\Re(s)$ belongs to a fixed compact subset of~$\R$.

Let $d$ be a positive integer.
For any $a\in F^*$, there exists an holomorphic function $s\mapsto \eta_a(s)$
defined for $\Re(s)>-1$ such that
\[ \eta_a(s) = \int_{F} \abs{x}^{s-1} \psi(a/x^d) \Phi(x,s)\, \mathrm dx \]
for $\Re(s)>0$.
Moreover, when $\Re(s)$ belongs to a compact subset of~$(-1,+\infty)$,
it satisfies a uniform upper-bound of the form
\[ \abs{\eta_a(s)}\ll {\abs a^{-1/d}}. \]
\end{prop}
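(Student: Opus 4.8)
The plan is to reduce the statement about the integral $\int_F \abs{x}^{s-1}\psi(a/x^d)\Phi(x,s)\,\mathrm dx$ to the oscillatory estimate of Proposition~\ref{lemm.weyl-s} by the substitution $y=1/x$. First I would split the integral at $\abs x=1$. On the region $\abs x\geq 1$ (equivalently $\abs y\leq 1$), the phase $\psi(ay^d)$ is not dangerous, the factor $\abs x^{s-1}=\abs y^{1-s}$ has a positive power of $\abs y$ when $\Re(s)<1$, and the integrand is a Schwartz function in $y$; in fact on this piece the integral converges and is holomorphic for $\Re(s)$ in any bounded set, including the strip $-1<\Re(s)$, so no continuation is needed there. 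The substance is the region $\abs x\leq 1$, i.e.\ $\abs y\geq 1$, where after the change of variables $x\mapsto 1/y$ (which sends $\mathrm dx$ to $\abs y^{-2}\,\mathrm dy$, up to the measure of~$\mathfrak o_F^*$ in the non-archimedean case) one gets an integral of the shape
\[
\int_{\abs y\geq 1} \abs y^{1-s}\psi(ay^d)\Phi(1/y,s)\abs y^{-2}\,\mathrm dy
= \int_{\abs y\geq 1} \abs y^{-(s+1)}\psi(ay^d)\,\widetilde\Phi(y,s)\,\mathrm dy,
\]
where $\widetilde\Phi(y,s)=\Phi(1/y,s)$ is now supported on $\abs y\geq \varepsilon_0$ for some $\varepsilon_0>0$ (since $\Phi(\cdot,s)$ has compact support), hence vanishes near $0$, and ranges over a bounded family of Schwartz functions in $y$ as $\Re(s)$ runs over a compact set.

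The key point is that this is now a \emph{non-stationary phase} integral: the integrand vanishes in a neighborhood of $y=0$, so there is no issue of meromorphic continuation — the integral, call it $\eta_a(s)$ together with the harmless piece above, is already an entire function of $s$ (the factor $\abs y^{-(s+1)}$ is holomorphic and bounded on $\abs y\geq\varepsilon_0$, uniformly for $\Re(s)$ bounded below). So the definition of $\eta_a(s)$ for $\Re(s)>-1$ is immediate, and agreement with the original integral for $\Re(s)>0$ follows from Fubini/absolute convergence there, together with the substitution being measure-preserving up to the normalizing constants. Next I would obtain the decay in $\abs a$. One cannot quote Proposition~\ref{lemm.weyl-s} verbatim because of the extra weight $\abs y^{-(s+1)}$ rather than $\abs x^{s-1}$ with $\Re(s)>0$, but the proof of that proposition applies essentially unchanged in the non-stationary case: in the real case one integrates by parts once in $y$ (the boundary terms at $\infty$ vanish by Schwartz decay, the boundary term at $\abs y=\varepsilon_0$ is killed by $\Phi$ vanishing there), picking up a factor $(2\pi\mathrm i d a y^{d-1})^{-1}$ and leaving an integral of the same type; this gives $\ll \abs a^{-1}$, better than the claimed $\abs a^{-1/d}$. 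The complex case follows by the polar-coordinates argument of Proposition~\ref{lemm.weyl-s}, and the non-archimedean case follows directly from Lemma~\ref{lemme2}, since the support of $\widetilde\Phi$ in $\abs y\geq\varepsilon_0$ means $n(\widetilde\Phi)$ is bounded and for $\abs a$ large enough every residue-class contribution vanishes, so in fact $\eta_a(s)=0$ for $\abs a$ sufficiently large. In every case one gets a bound $\ll\abs a^{-1/d}$ (indeed $\ll \abs a^{-1}$), uniform for $\widetilde\Phi$ — hence $\Phi$ — in a bounded family and $\Re(s)$ in a compact subset of $(-1,\infty)$.

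The main obstacle is bookkeeping rather than conceptual: one must track that the change of variables $y=1/x$ is compatible with the chosen Haar measures (the $\abs y^{-2}$ Jacobian, and, in the non-archimedean setting, restricting to $\mathfrak o_F^*$-cosets and summing), and that the weight $\abs x^{s-1}$ really does become a \emph{bounded} holomorphic factor after the transformation on the relevant region — this is where the hypothesis $\Re(s)>-1$ enters, via $-(s+1)$ having real part bounded above by something negative only matters for convergence at $\infty$, which is supplied by Schwartz decay, while holomorphy and the uniform bound on $\abs y\geq\varepsilon_0$ are automatic. A secondary point is to confirm that differentiating under the integral sign in $s$ is legitimate, which follows from the assumed holomorphy of $s\mapsto\Phi(x,s)$ together with the uniform boundedness hypothesis and dominated convergence. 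Once these are in place, the two displayed conclusions of the proposition are immediate.
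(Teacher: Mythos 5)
There is a genuine gap, and it sits exactly at the point that is the substance of the proposition. After the inversion $y=1/x$ on the region $\abs x\le 1$, the function $\widetilde\Phi(y,s)=\Phi(1/y,s)$ is \emph{not} a Schwartz function of~$y$: its support is $\{\abs y\ge 1/R\}$ where $R$ bounds the support of $\Phi(\cdot,s)$, which is unbounded, and $\widetilde\Phi(y,s)\to\Phi(0,s)$ as $\abs y\to\infty$, so there is no decay at infinity unless $\Phi(0,s)=0$ (which is precisely the uninteresting case). Consequently your two key assertions fail: the transformed integral $\int_{\abs y\ge 1}\abs y^{-(s+1)}\psi(ay^d)\widetilde\Phi(y,s)\,\mathrm dy$ is \emph{not} absolutely convergent for $\Re(s)\le 0$ (the integrand is $\sim\abs{\Phi(0,s)}\,\abs y^{-\Re(s)-1}$ at infinity), so it is not ``already an entire function of~$s$'' and the continuation to $\Re(s)>-1$ genuinely requires exploiting the oscillation $\psi(ay^d)$ at infinity; and in the integration by parts the boundary term at $\infty$ is not ``killed by Schwartz decay''. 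The difficulty was never at $y=0$; the inversion moves it from $x=0$ to $y=\infty$, and declaring the result ``non-stationary phase'' does not remove it.

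The paper's proof shows what is actually needed: a Littlewood--Paley (dyadic, or $\varpi$-adic) partition of unity $\sum_n\theta(\varpi^nx)=1$ is inserted \emph{before} inverting, so that each piece $\eta_{a,n}(s)$, after the substitution $x=1/u$, involves a genuine Schwartz function supported in a fixed annulus $q^{-1}<\abs u<q$, to which Proposition~\ref{lemm.weyl-s} applies with effective oscillation parameter $aq^{nd}$. The resulting bound $\abs{\eta_{a,n}(s)}\ll(\abs a q^{nd})^{-1/d}=q^{-n}\abs a^{-1/d}$ supplies an extra factor $q^{-n}$ beyond the prefactor $q^{-ns}$, and it is exactly this gain from the phase that makes $\sum_n q^{-n(\Re(s)+1)}$ converge, extending holomorphy from $\Re(s)>0$ to $\Re(s)>-1$ and yielding the uniform bound $\ll\abs a^{-1/d}$. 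Your real-case integration by parts could in principle be repaired (the derivative $\widetilde\Phi'(y)=-y^{-2}\Phi'(1/y)$ does decay, and one must track the boundary contribution of $\Phi(0,s)$ at infinity), but as written the argument asserts convergence and holomorphy on the strength of a compact-support property that the transformed function does not have.
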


\begin{proof}
To give an unified proof, we introduce a version of ``Littlewood--Paley''
decomposition. If $F$ is non-archimedean, let $\theta(x)=1$ if $\abs{x}=1$ and $0$ else;
we then have 
\[
\sum_{n\in\Z} \theta(\varpi^n x)=1\quad \text{for all} \quad x\in F^*,
\]
where $\varpi$ is any uniformizing element of~$F$, with $q=\abs\varpi^{-1}$.
If $F$ is archimedean, let $\theta$ be any smooth nonnegative function 
such that:
\begin{itemize}
\item it is supported in the collar $1/2<\abs{x}<2$;
\item it equals $1$ in a neighborhood of the collar $\abs{x}=1$;
\item the sum $\sum_{n\in\Z} \theta(2^n x)$ is a equal to $1$
everywhere (except for \mbox{$x=0$}).
\end{itemize}
Such functions exist; for instance, take any function $\theta_1$ satisfying the
first two assumptions and which is positive on 
the collar $\frac 23<\abs x<\frac32 $.
Let $\Theta_1(x)=\sum_{n\in\Z }\theta_1(2^n x)$;
it is positive everywhere and satisfies $\Theta(2x)=\Theta(x)$.
Let $\theta(x)=\theta_1(x)/\Theta(x)$. It follows that
for $x\neq 0$, 
$$
1=\sum_{n\in\Z} \theta(2^n x).
$$ 
By assumption, if $\abs{x}\leq 1$,
$\theta(2^n x)=0$ for $n\leq -1$, hence 
$\sum_{n\geq 0} \theta(2^nx)$ equals~$1$ on the unit ball
and is also compactly supported.
We let $\varpi=q=2$ in this case.

Now, one has
\[
\eta_a(s)  =  \int_{F} \abs{x}^{s-1} \psi(a/x^d) \Phi(x,s)
  \sum_{n\in\Z}     \theta(\varpi ^nx)\, \mathrm dx
= \sum _{n\in\Z} q^{-ns} \eta_{a,n}(s), \]
where
\[ \eta_{a,n}(s)= 
   \int_{2^{-1}<\abs x<2} \abs{x}^{s-1} \psi(a\varpi^{dn} /x^d)
     \Phi(\varpi^{-n}x,s) \theta(x) \, \mathrm dx.
\]
Since the support of $\Phi(\cdot,s)$ 
is contained in a fixed compact set of~$F$, there exists an integer~$n_0$ such that 
the individual integrals $\eta_{a,n}(s)$ are $0$ for $n\leq -n_0$.
Using the change of variables $x=1/u$, one finds
\begin{align*}
 \eta_{a,n}(s) & = \int_{2^{-1}<\abs x<2} \abs{x}^{s-1} \psi(a\varpi^{dn} /x^d)
     \Phi(\varpi^{-n}x,s) \theta(x) \, \mathrm dx \\
& = \int_{2^{-1}<\abs u<2} \abs{u}^{-s-1} \psi(a\varpi^{nd} u^d)
     \Phi(\varpi^{-n}/u,s)\theta(1/u)\, \mathrm du.
\end{align*}
By Proposition~\ref{lemm.weyl-s}, applied to the Schwartz function
$$
u\mapsto \abs u^{-s-1}\Phi(\varpi^{-n}/u,s)\theta(1/u),
$$
there exists a real number~$c$
such that
\[ \abs{\eta_{a,n}(s)} 
       \leq c (\abs{a\varpi ^{nd}})^{-1/d}  \leq  c q^{-n} \abs{a}^{-1/d}. \]
(In fact, in the ultrametric case, there exists an integer~$n_1$
such that these integrals are zero for $n\geq n_1$.)
It follows that the series defining $\eta_a(s)$ is bounded term by term by
\[ c  \sum_{n\geq -n_0} q^{-n(s+1)} \abs{a}^{-1/d}
  = c \abs{a}^{-1/d} q^{n_0(s+1)} \frac{1}{1-q^{-(s+1)}} . \]
Thus, the series $\eta_a(s)=\sum \eta_{a,n}(s)$ converges 
normally for $\Re(s+1)>0$, locally uniformly in $a\in F^*$
and locally uniformly in $s$, proving the holomorphy in $s$.
The lemma is thus proved.
\end{proof}

\section{Compactifications of additive groups}
\label{sec.additif}

In this chapter, we use methods of harmonic analysis 
to derive asymptotic formulas for the number of 
integral points of bounded height on partial
(equivariant) compactifications of additive groups.

\subsection{Setup and notation}

\subsubsection{Algebraic number theory}
Let $F$ be a number field.
Let $\Val(F)$ be the set of places of~$F$. For $v\in\Val(F)$,
let $F_v$ be the completion of~$F$ at the place~$v$, 
and $\AD_F$ the adele ring of~$F$.
We also fix a finite set~$S\subset\Val(F)$ containing the archimedean places.
We write $\zeta_{F,v}$ for the local factor of Dedekind's zeta function
at a place~$v\in \Val(F)$, and $\zeta^S_F$ for the 
Euler product over places~$v\not\in S$. This product converges
for $\Re(s)>1$ and has a meromorphic continuation to the
whole complex plane, with a single pole at $s=1$; its
residue at $s=1$ is denoted by~$\zeta_F^{S,*}(1)$.
Finally, $\AD_F^S$ is the ring of adeles outside the places belonging to~$S$.

\subsubsection{Algebraic geometry}\label{subsubsec.add-setup1}
Let $G$ be the group scheme~$\ga^n$,
and let $X$ be a smooth projective equivariant compactification of~$G$
over a number field~$F$.
The geometry of such compactifications has been investigated
in~\cite{hassett-t99}. We recall the key facts.
The boundary $X\setminus  G$ decomposes as a union
of $F$-irreducible divisors :
$X\setminus G = \bigcup_{\alpha\in\mathcal A} D_\alpha$,
forming a basis of the group~$\Pic(X)$ of equivalence classes of divisors,
and a basis of the monoid~$\Lambda_\eff(X)$ 
of classes of effective divisors in~$\Pic(X)$. 

Since $\Pic(G)=0$, line bundles on~$X$ have a $G$-linearization,
which is unique up to a scalar since $G$ carries no non-constant invertible functions.
Thus, any  line bundle possesses a meromorphic global section, unique modulo scalars,
whose divisor does not meet~$G$, hence is a linear combination of the~$D_\alpha$.
Therefore, we will freely identify line bundles on~$X$ with divisors contained
in the boundary, and with their classes in the Picard group.

Note that we do not assume 
that the divisors~$D_\alpha$ are geometrically irreducible.
Let $\bmA$ be the set of irreducible components of~$(X\setminus G)_{\bar F}$;
this is a finite set with an action of the Galois group~$\Gamma_F=\Gal(\bar F/F)$
whose set of orbits identifies with~$\mathscr A$. 
More generally, for any extension~$E$ of~$F$ together with
an embedding of~$\bar F$ in~$\bar E$, the set of orbits
of~$\bmA$ under the natural action of~$\Gal(\bar E/E)$
is identified with the set of irreducible components of~$(X\setminus G)_E$.
As above, the classes of these irreducible components form a
basis of the Picard group~$\Pic(X_E)$, as well as a basis of
its effective cone~$\Lambda_\eff(X_E)$.

For each $\alpha\in\mathcal A$, let $F_\alpha$ be the algebraic
closure of $F$ in the function field of $D_\alpha$. It is a finite
extension of $F$. 
After choosing a particular geometrically 
irreducible component of~$D_{\alpha,\bar F}$
(\ie, a specific element in the orbit in~$\bmA$ corresponding to~$\alpha$),
we may view~$F_\alpha$ as a subfield of~$\bar F$; we write $\Gamma_\alpha$
for the Galois group~$\Gal(\bar F/F_\alpha)$.
The representation of the Galois group~$\Gamma_F$ on~$\Pic(X_{\bar F})$ is the direct sum 
of the permutation modules $\Ind_{F_\alpha}^F[\mathbf 1]$ obtained by inducing
the trivial representation from~$\Gal(\bar F/F_\alpha)$ to~$\Gal(\bar F/F)$.

Let $K_X$ be the canonical class of~$X$, \ie, the class of the divisor
of any meromorphic differential form of top degree. In fact,
up to multiplication by a scalar, there is a unique $ G$-invariant
meromorphic differential form~$\omega_X$ on~$X$; its restriction to~$ G$
is proportional  to the form~$\mathrm dx_1\wedge\dots\wedge\mathrm dx_n$.
The anticanonical class~$K_X^{-1}$ is effective; indeed,
writing $\sum_{\alpha\in\mathscr A}\rho_\alpha D_\alpha$ 
for \emph{minus} the divisor of~$\omega_X$,
we have $\rho_\alpha\geq 0$ for any~$\alpha$; in fact,
$\rho_\alpha\geq 2$ (\cite{hassett-t99}, Theorem~2.7).

We also recall that $\mathrm H^i(X,\mathscr O_X)=0$ for $i>0$.
Indeed, $X$ is birational to the projective space~$\P^n$ 
and these cohomology groups
are birational invariants of smooth proper varieties in characteristic zero.

\subsubsection{Adelic metrics and heights}
\label{sect:height}
Endow each line bundle $\mathscr O(D_\alpha)$ with a smooth adelic metric,
as in Section 2.2.3 of~\cite{chambert-loir-tschinkel2009a}. 
The line bundles $\mathscr O(D_\alpha)$ have a canonical
section which we denote by~$\mathsec f_\alpha$. 

For $\mathbf s\in\C^{\mathscr A}\simeq\Pic(X)\otimes\C$
and $\mathbf x =(\mathbf x_v)_v\in G(\AD_F)$,
we let
\[ H(\mathbf x;\mathbf s)
      = \prod_{\alpha\in\mathscr A}\prod_v \norm{\mathsec
f_\alpha}_v(\mathbf x_v)^{-s_\alpha}.
\]
When $\mathbf s$ corresponds to a very ample class~$\lambda$ in $\Pic(X)$,
the restriction of $H(\cdot;\mathbf s)$ to $ G(F)$
is the standard (exponential) height  relative to the projective embedding
of~$X$ defined by~$\lambda$.  In particular, Northcott's theorem
asserts that for any real number~$B$, the set of $x\in G(F)$ such
that $H(\mathbf x;\mathbf s)\leq B$ is finite.
When all components~$s_\alpha$ of~$\mathbf s$ are positive,
the corresponding line bundle~$\lambda$ belongs to the interior
of the effective cone; by Prop.~4.3 of~\cite{chambert-loir-t2002},
this  finiteness property still holds. 

\subsubsection{Partial compactifications}
A partial compactification of~$G$ is a 
smooth quasi-projective scheme~$U$, containing~$G$ as an open subset,
endowed with an action of~$G$ which extends the translation action on~$G$.
We will always assume, as we may, that $U$ is the complement to a reduced divisor~$D$ in a smooth projective
equivariant compactification~$X$ of~$G$ as above. The divisor~$D$ will
be called the boundary divisor of~$U$. 
We let also $\mathscr A_D$ to be the subset of~$\mathscr A$ such that
\[ D=\sum_{\alpha\in\mathscr A_D} D_\alpha. \]

The log-canonical class of~$U$ in~$\Pic(X)$ is the class of~$K_X+D$, the log-anticanonical class is its opposite. 
Since $\rho_\alpha\geq 2$ for all $\alpha\in\mathscr A$,
$-(K_X+D)$ belongs to the interior of the effective cone of~$X$,
so is big.
We have introduced in~\cite{chambert-loir-tschinkel2009a},
Definition~2.2, a virtual $\Gamma_F$-module~$\EP(U)$,
the difference of the Galois modules
$\mathrm H^0(U_{\bar F},\mathscr O^*)/\bar F^*$
and $\Pic(U)/\mathrm{torsion}$ (both abelian groups are free of finite rank).

\begin{lemm}\label{lemm.P(U)-additif}
The virtual representation~$\EP(U)$
is given by
\[ - \sum_{\alpha\in\mathcal A\setminus \mathcal A_D} \Ind_{F_\alpha}^F[\mathbf 1]. \]
where~$[\mathbf 1]$ is the abelian group~$\Z$ together with the trivial action of~$\Gamma_F$,
and $\Ind_{F_\alpha}^F$ denotes the induction functor
to~$\Gamma_F$
from its subgroup of finite index~$\Gamma_{\alpha}$.
\end{lemm}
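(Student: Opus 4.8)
The plan is to compute $\EP(U)$ as the difference of the two Galois modules $\mathrm H^0(U_{\bar F},\mathscr O^*)/\bar F^*$ and $\Pic(U_{\bar F})$ (modulo torsion), using the excision/localization sequence relating $X$ and $U = X\setminus D$ together with the two facts about $X$ recalled in \S\ref{subsubsec.add-setup1}: that the boundary components $D_\alpha$ (over $\bar F$, the components indexed by $\bmA$) form a $\Z$-basis of $\Pic(X_{\bar F})$, and that $X_{\bar F}$ carries no nonconstant invertible regular functions (since $\mathrm H^0(X,\mathscr O_X^*) = \bar F^*$ as $X$ is proper, integral, and $\Pic(G_{\bar F})=0$ together with $\mathrm H^0(G_{\bar F},\mathscr O^*)=\bar F^*$).

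First I would work over $\bar F$ and write down the exact sequence of $\Gamma_F$-modules
\[
0 \longrightarrow \mathrm H^0(X_{\bar F},\mathscr O^*)/\bar F^* \longrightarrow \mathrm H^0(U_{\bar F},\mathscr O^*)/\bar F^* \longrightarrow \bigoplus_{\alpha\in\bmA\cap D}\Z\cdot D_\alpha \longrightarrow \Pic(X_{\bar F}) \longrightarrow \Pic(U_{\bar F}) \longrightarrow 0,
\]
where the third arrow sends a rational function to its divisor (supported on the $D_\alpha\subseteq D$, since on $U$ it is invertible) and the fourth arrow is restriction of line bundles; the sum over the boundary components not in $D$ does not enter because those divisors survive in $U$. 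The leftmost term vanishes by the paragraph above, and because the classes of \emph{all} boundary components $D_\alpha$, $\alpha\in\bmA$, are linearly independent in $\Pic(X_{\bar F})$, the map from $\bigoplus_{\alpha\in D}\Z\,D_\alpha$ into $\Pic(X_{\bar F})$ is injective. Hence $\mathrm H^0(U_{\bar F},\mathscr O^*)/\bar F^* = 0$ and, from the remaining short exact sequence, $\Pic(U_{\bar F})$ is the quotient of $\Pic(X_{\bar F}) = \bigoplus_{\alpha\in\bmA}\Z\,D_\alpha$ by the submodule spanned by $\{D_\alpha : \alpha\in\bmA\cap D\}$, so $\Pic(U_{\bar F}) \cong \bigoplus_{\alpha\in\bmA\setminus D}\Z\,D_\alpha$ as a $\Gamma_F$-module, and in particular is torsion-free.

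It remains to identify this Galois module with $\bigoplus_{\alpha\in\mathcal A\setminus\mathcal A_D}\Ind_{F_\alpha}^F[\mathbf 1]$: the set $\bmA\setminus D$ of geometric components outside the boundary is a $\Gamma_F$-set, its orbits are exactly indexed by $\alpha\in\mathcal A\setminus\mathcal A_D$, and the stabilizer of the chosen geometric component inside the orbit $\alpha$ is $\Gamma_\alpha = \Gal(\bar F/F_\alpha)$; the permutation module on such an orbit is by definition $\Ind_{F_\alpha}^F[\mathbf 1]$. This is precisely the decomposition of $\Pic(X_{\bar F})$ as $\bigoplus_{\alpha\in\mathcal A}\Ind_{F_\alpha}^F[\mathbf 1]$ recorded in \S\ref{subsubsec.add-setup1}, restricted to the components not contributing to $D$. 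Therefore $\EP(U) = [\mathrm H^0(U_{\bar F},\mathscr O^*)/\bar F^*] - [\Pic(U_{\bar F})/\mathrm{torsion}] = 0 - \sum_{\alpha\in\mathcal A\setminus\mathcal A_D}\Ind_{F_\alpha}^F[\mathbf 1]$, as asserted.

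The only genuinely nontrivial input is the injectivity of $\bigoplus_{\alpha\in D}\Z\,D_\alpha\hookrightarrow\Pic(X_{\bar F})$ and the vanishing $\mathrm H^0(U_{\bar F},\mathscr O^*)=\bar F^*$; both follow from the structural results of Hassett–Tschinkel quoted in \S\ref{subsubsec.add-setup1} (the boundary components form a basis of $\Pic(X_{\bar F})$, hence are linearly independent, and there are no nonconstant units on $X_{\bar F}$), so the argument is essentially a bookkeeping exercise with the localization sequence once those are in hand. I would take care only to phrase everything $\Gamma_F$-equivariantly and to note that $\Gamma_F$ acts through a finite quotient so that "virtual representation" makes sense; no further subtlety arises.
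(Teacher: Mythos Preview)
Your proof is correct and follows essentially the same approach as the paper: compute the two pieces of $\EP(U)$ separately and identify the resulting permutation module. The only minor difference is that the paper obtains $\mathrm H^0(U_{\bar F},\mathscr O^*)=\bar F^*$ more directly from the inclusion $G\subset U$ (since $\mathbb G_a^n$ already has no nonconstant units), rather than deducing it from injectivity in the localization sequence; for $\Pic(U_{\bar F})$ the paper simply cites its earlier Proposition~1.1, whose proof is exactly the localization-sequence argument you wrote out.
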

Note that this representation is trivial if and only if $U=G$.
\begin{proof}
One has $\mathrm H^0(U_{\bar F},\gm)={\bar F}^*$, because $U$ contains~$G$
over which the result is already true.
Moreover, the classes of the divisors $D_\alpha\cap U$,
for $\alpha\not\in\mathcal A_D$, form a basis of $\Pic(U_{\bar F})$;
see Proposition~1.1 of~\cite{chambert-loir-t2002} when $D=\emptyset$,
but the proof holds for any equivariant embedding of~$ G$.
\end{proof}

\subsubsection{Clemens complexes}
We have introduced in~\cite{chambert-loir-tschinkel2009a} various simplicial complexes to encode
the combinatorial properties of the irreducible components
of the boundary divisors~$D_\alpha$ and their intersections.
Let $(V,Z)$ be a pair consisting of a smooth variety over a field~$F$
and of a divisor~$Z$ such that $Z_{\bar F}$ has strict normal crossings,
\ie, is a sum of smooth irreducible components which meet transversally.

Let $\bmA$ be the set of irreducible components of~$Z_{\bar F}$,
together with its action of~$\Gamma_F$; 
for $\alpha\in\bmA$,
let $Z_\alpha$ be the corresponding component.
For any subset~$A$ of~$\bmA$, let 
\[ Z_A=\bigcap_{\alpha\in\mathscr A}Z_\alpha, \quad
Z_A^\circ=Z_A\setminus \left( \bigcup_{\beta\not\in A} Z_\beta\right). \]
The sets $Z_A$ are closed in~$V_{\bar F}$,
the sets~$Z_A^\circ$ form a partition of~$V$ in locally closed subsets.
In particular, $Z_{\emptyset}^\circ=(V\setminus Z)_{\bar F}$. 
Unless they are empty, the sets~$Z_A$ and~$Z_A^\circ$
 are defined over~$F$ if and only if~$A$
is globally invariant under~$\Gamma_F$.

The geometric Clemens complex $\Cl_{\bar F}(Z)$ of the pair~$(V,Z)$
has for vertices the elements of~$\bmA$; more generally, its
faces are the irreducible components of the closed subsets~$Z_A$, 
for all non-empty subsets $A\subset\bmA$.
An $n$-dimensional face corresponds to a component of codimension $n+1$ in~$V$. 

The geometric Clemens complex carries a 
natural simplicial action of the group~$\Gamma_F$.
The rational Clemens complex, $\Cl_F(Z)$, of the pair~$(V,Z)$
has for faces the $\Gamma_F$-invariant faces of~$\Cl_{\bar F}(Z)$.
A similar complex~$\Cl_E(Z)$ can be defined for any extension~$E$ of~$F$;
we will apply this when $E=F_v$ is the completion of~$F$ at a place~$v$
of~$F$.

For any place~$v$ of~$F$, the $v$-analytic Clemens complex $\Clan_{F_v}(Z)$
is then defined as the subcomplex of~$\Cl_{F_v}(Z)$ 
whose faces correspond to irreducible components of 
intersections of irreducible components of~$Z_{F_v}$, 
which contain $F_v$-rational points.

\medskip

We will apply these considerations when $V=X$ and $Z=D=X\setminus U$,
where $X$ is a smooth projective equivariant compactification of~$G$, 
and $U\subset X$ is a partial compactification. 
We require throughout this paper that over~$\bar F$ 
the divisor~$X\setminus G$ has strict normal crossings.

\subsubsection{Measures}
Let us fix a gauge form~$\mathrm d\mathbf x$ on~$G$, defined over~$F$.
For any place~$v$ of~$F$, its absolute value is a Haar measure
on~$G(F_v)$, still denoted~$\mathrm d\mathbf x$ (or~$\mathrm d\mathbf x_v$).
The product of these Haar measures is a Haar measure on~$G(\AD_F)$.
Moreover, $G(F)$ is a discrete cocompact subgroup of covolume~$1$
(see~\cite{tate67} for the case $G=\ga$; the general case follows
from it).

The fixed adelic metrization of the line bundles~$\mathscr O(D_\alpha)$
induces an adelic metrization on the canonical and log-canonical line bundles.
As in Section~4 of~\cite{chambert-loir-tschinkel2009a},
this gives rise, for any place~$v$ of~$F$, to measures
$\tau_{X,v}$ on the $F_v$-analytic manifold~$X(F_v)$ and 
its restriction $\tau_{U,v}$ to the open submanifold $U(F_v)$ of~$X(F_v)$.
We also introduced
the measure 
\[ \tau_{(X,D),v}=\norm{\mathsec f_D}_v^{-1}\tau_{U,v} \]
on $U(F_v)$, where $\mathsec f_D$ is the canonical section
of~$\mathscr O_X(D)$. 

By  results in Section~4 of~\cite{chambert-loir-tschinkel2009a}, 
the product of the local measures 
$$
\mathrm L_v(1,\EP(U)) \tau_{U,v}, \quad \quad \text{ for } \quad v\not\in S,
$$
converges to a measure on the adelic space $U(\AD_F^S)$ outside~$S$.
We then define
\[ \tau_U^S = \mathrm L_*^S(1,\EP(U))^{-1} \prod_{v\not\in S} \mathrm L_v(1,\EP(U)) \tau_{U,v}. \]

Let $v$ be a place in~$S$ 
and fix a decomposition group $\Gamma_v\subset\Gamma_F$ at~$v$.

Let $\mathscr A_v=\bmA/\Gamma_v$  and $\mathscr A_{D,v}$ be the set of
orbits of the group~$\Gamma_v$ acting on~$\bmA$ and $\bmA_D$. 
They are equal to the sets of $F_v$-irreducible components of~$(X\setminus G)_{F_v}$ and of $D_{F_v}$, respectively.
For each~$\alpha\in\mathscr A$,
the divisor~$(D_\alpha)_{F_v}$ decomposes as a sum of irreducible components
$D_{\alpha,w}$ indexed by the direct factors~$F_{\alpha,w}$
of the algebra $F_\alpha\otimes_F F_v$. The canonical section~$\mathsec f_\alpha$
of~$\mathscr O(D_\alpha)$ decomposes accordingly as a product
$\prod \mathsec f_{(\alpha,w)}$.
We shall identify $\mathscr A_v$ with the set of such pairs $(\alpha,w)$,
and $\mathscr A_{D,v}$ with those such that $\alpha\in\mathscr A_D$.

Let $A$ be a face of the analytic Clemens complex $\Clan_{F_v}(X,D)$,
that is a subset of~$\mathscr A_{D,v}$ such that
the intersection~$D_A$ of the divisors $D_{\alpha,w}$, for $(\alpha,w)\in A$, 
has  a common $F_v$-rational point. 
Iterating the construction given in Section~2.1.13
of~\cite{chambert-loir-tschinkel2009a},
we have ``residue measures'' 
on the submanifold~$D_A(F_v)$ of~$X(F_v)$, 
normalized by incorporating the product $\prod_{\alpha\in A} \mathrm c_{F_{\alpha},w}$
associated to the local fields~$F_{\alpha,w}$, for $(\alpha,w)\in A$.
The resulting measures are denoted by~$\tau_{D_A,v}$.

\subsubsection{Integral points}
\label{sect:integral-points}
Let $S$ be a finite set of places of~$F$ containing the archimedean places.
We are interested in ``$S$-integral points of $U$''.
The precise definition depends on the choice of a model of $U$ over the
ring~$\mathfrak o_{F,S}$ of $S$-integers in~$F$; namely a quasi-projective
scheme over~$\Spec(\mathfrak o_{F,S})$ whose restriction to the generic fiber
is identified with~$U$. 
Given such a model,
the $S$-integral points of~$U$ are the elements of~$\mathscr U(\mathfrak o_{F,S})$,
in other words those rational points of~$U(F)$ which
extend to a section of the structure morphism 
from~$\mathscr U$ to~$\Spec(\mathfrak o_{F,S})$.
Fix such a model~$\mathscr U$.

For any finite place~$v$ of~$F$ such that $v\not\in S$,
let $\mathfrak u_v=\mathscr U(\mathfrak o_{F,v})$ and let $\delta_v$ be the characteristic function
of the subset $\mathfrak u_v\subset X(F_v)$.
Consequently, 
\emph{a point $x\in X(F)$ is an $S$-integral point of~$U$
if and only if one has $x\in \mathfrak u_v$ for any place~$v$ of~$F$ such that $v\not\in S$},
equivalently, if and only if $\prod_{v\not\in S}\delta_v(x)=1$.
We put $\delta_v\equiv 1$ when $v\in S$.

By the definition of an adelic metric, there exists a finite set of places~$T$,
a flat projective model $\mathscr X$ over $\mathfrak o_{F,T}$
satisfying the following properties:
\begin{itemize}
\item
$\mathscr X$
is a smooth equivariant compactification of
the $\mathfrak o_{F,T}$-group scheme~$G$;
\item
for any~$\alpha\in\mathscr A$, the closure~$\mathscr D_\alpha$ of $D_\alpha$ 
is a divisor on $\mathscr X$;
\item 
the boundary $\mathscr X\setminus G$ is the union
of these divisors $\mathscr D_\alpha$;
\item
for any~$\alpha$,
the section $\mathsec f_\alpha$ extends to a global section
of the line bundle $\mathscr O(\mathscr D_\alpha)$ on $\mathscr X$
whose divisor is precisely $\mathscr D_\alpha$.
\end{itemize}
Since
any isomorphism between two $\mathfrak o_{F,S}$-schemes of finite presentation
extends uniquely to an isomorphism over  an open subset of~$\Spec\mathfrak o_{F,S}$,
we may assume that after restriction to~$\Spec(\mathfrak o_{F,T})$, $\mathscr U$ is the complement
in~$\mathscr X$ to the Zariski closure~$\mathscr D=\sum_{\alpha\in \mathscr A_D}\mathscr D_\alpha$
of~$D$ in~$\mathscr X$.
For all places~$v\not\in T\cup S$,
one thus has $\mathfrak u_v=\mathscr U(\mathfrak o_v)$.

\subsubsection{Height zeta function}
We proceed to study of the distribution of $S$-integral
points of~$U$ with respect to heights. In fact, we only study those
integral points which belong to $ G(F)$; moreover, we 
consider the heights with respect to all line bundles in the Picard group
at the same time.
The \emph{height zeta function} is
defined as
\[ \Zeta (\mathbf s) = \sum_{\mathbf x\in G(F)\cap\mathscr U(\mathfrak o_{F,S})} 
	    H(\mathbf x;\mathbf s)^{-1},
\]
whenever it converges.
It follows from Proposition~4.5 in~\cite{chambert-loir-t2002}
that there exists a non-empty open subset $\Omega\subset\Pic(X)_\R$
such that $\Zeta(\mathbf s)$ converges absolutely to a bounded
holomorphic function in the tube domain $\Tube(\Omega)=\Omega+\mathrm i\Pic(X)_\R$.

Using the functions~$\delta_v$ 
defined above, it follows that
\[ \Zeta(\mathbf s) = \sum_{\mathbf x\in G(F)}
 \prod_{v\in\Val(F) \setminus S} \big(  \delta_v(\mathbf x_v)  \prod_{\alpha\in\mathscr A} 
\norm{\mathsf f_\alpha(\mathbf x_v)}_v^{s_\alpha}\big) 
\times
\prod_{v\in S} \big(\prod_{\alpha\in\mathscr A}
\norm{\mathsf f_\alpha(\mathbf x_v)}_v^{s_\alpha}\big).
\]

\subsubsection{Fourier transforms}
We write $\langle\cdot,\cdot\rangle$ for the bilinear pairing on~$G$ 
\[ 
\langle (x_1,\dots,x_n),(y_1,\dots,y_n)\rangle=\sum x_i y_i. 
\]
Then, the maps
\[  G(F_v)\times  G(F_v)\ra\C^* , \quad
  ( \mathbf a, \mathbf x)\mapsto \psi_v(\langle \mathbf a,\mathbf x\rangle)
\]
and
\[ G(\AD_F)\times G(\AD_F)\ra\C^*, \quad (\mathbf a,\mathbf x)\mapsto \psi(\langle \mathbf a,\mathbf x\rangle) \]
are perfect pairings.

For each place~$v$ of $F$ and $\mathbf a\in G(F_v)$, define
\[ \hat H_v( \mathbf a;\mathbf s)
     = \begin{cases}\displaystyle
\int_{ G(F_v)} \delta_v(\mathbf x)
	 \prod_{\alpha\in\mathscr A}   \norm{\mathsec f_\alpha(\mathbf x)}_v^{s_\alpha}
	   \psi_v(\langle  \mathbf a,\mathbf x\rangle) \, \mathrm d\mathbf x
& \text{if $v\not\in S$} \\
\displaystyle \int_{ G(F_v)} 
	\prod_{\alpha\in\mathscr A}    \norm{\mathsec f_\alpha(\mathbf x)}_v^{s_\alpha}
	   \psi_v(\langle  \mathbf a,\mathbf x\rangle) \, \mathrm d\mathbf x
& \text{otherwise}, \end{cases}
\]
where the integrals are taken with respect to the chosen 
Haar measure on~$G(F_v)$.
Moreover, for any $\mathbf a=(\mathbf a_v)_v\in G(\AD_F)$, we define
\[ \hat H(\mathbf a;\mathbf s)
 = \prod_{v\in\Val(F)} \hat H_v(\mathbf a_v;\mathbf s). \]

Formally, we can write down the Poisson summation formula  
for the locally compact group~$G(\AD_F)$
and its discrete cocompact subgroup~$G(F)$. Since $G(F)$ has covolume~$1$,
\begin{equation}
\label{eqn:poisson} 
\Zeta(\mathbf s) = \sum_{ \mathbf a\in G(F)}
      \hat H(\mathbf a;\mathbf s). 
\end{equation}
Below, we will establish a series of analytic
estimates guaranteeing that the Poisson summation formula can be applied
and we will show that its right hand side provides a meromorphic continuation
of the height zeta function.

\subsection{Fourier transforms (trivial character)}

In our paper~\cite{chambert-loir-tschinkel2009a}, we have established
the analytic properties of the Fourier transform at the trivial character,
\ie, of the local integrals
\[ \hat H_v(0;\mathbf s) = \int_{G(F_v)} \delta_v(\mathbf x) \prod_{\alpha \in\mathscr A} \norm{\mathsec f_\alpha(\mathbf x)}_v^{s_\alpha}\,\mathrm d\mathbf x \]
for $v\in\Val(F)$, and of their ``Euler product''
\[ \hat H(0;\mathbf s) = \prod_{v\in\Val(F)} \hat H_v(0;\mathbf s). \]
We now summarize these results.

If $a$ and~$b$ are real numbers, we define
$\Tube_{>a}$, \resp  $\Tube_{(a,b)}$, as the set of $s\in\C$
such that $a<\Re(s)$, \resp $a < \Re(s) <b$. We write
$\Tube_{>a}^{\mathscr A}$ for the set of families of elements
of~$\Tube_{>a}$ indexed by the set~$\mathscr A$, etc.

\subsubsection{Absolute convergence of the local integrals}

We begin by stating the domain of absolute convergence of the 
local integrals~$\hat H_v(0;\mathbf s)$, as well as
their meromorphic continuation.
We recall that $\rho=(\rho_\alpha)_{\alpha\in\mathscr A}$ is the vector of positive
integers such that $\sum_{\alpha\in\mathscr A}\rho_\alpha D_\alpha$ is an anticanonical
divisor.
\begin{lemm}\label{lemm.local-meromorphic}
Let $v$ be a place of~$F$.
The integral defining
$$
\hat H_v(0;(s_\alpha+\rho_\alpha-1)_{\alpha\in \mathscr A})
$$
converges for $s\in\Tube_{>0}^{\mathscr A}$ to
a holomorphic function on $\Tube_{>0}^{\mathscr A}$.
The holomorphic function 
\[ \mathbf s\mapsto \prod_{(\alpha,w)\in\mathscr A_v} \zeta_{F_\alpha,v}(s_\alpha)^{-1}
\hat H_v(0;(s_\alpha+\rho_\alpha-1)) \]
extends to a holomorphic function on~$\Tube_{>-1/2}^{\mathscr A}$.
\end{lemm}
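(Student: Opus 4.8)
The plan is to localize the integral $\hat H_v(0;\mathbf s)$ on a cover of $X(F_v)$ adapted to the boundary stratification, reducing to a product of one-variable local Igusa integrals that are controlled by the local zeta functions $\zeta_{F_{\alpha,w},v}$. Concretely, since $X\setminus G$ has strict normal crossings over $\bar F$, we may choose, for each point of $X(F_v)$, local analytic coordinates $(y_1,\dots,y_n)$ on a small polydisk such that the boundary divisors passing through the point are given by $\{y_i=0\}$ for $i$ in some subset, with multiplicities in $\mathbf s$ shifted by $\rho_\alpha-1$; compactness of $X(F_v)$ gives a finite subcover. On each chart the metrized section $\norm{\mathsec f_\alpha}_v$ equals $\abs{y_i}_v^{(\text{mult})}$ times a nowhere-vanishing smooth function, and the Haar measure $\mathrm d\mathbf x$ pulls back to a smooth nonvanishing density times $\prod\mathrm dy_i$. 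Thus $\hat H_v(0;(s_\alpha+\rho_\alpha-1))$ becomes a finite sum of integrals of the shape $\int \prod_i \abs{y_i}_v^{s_{\alpha(i)}-1}\,\Psi(y)\,\mathrm dy$ with $\Psi$ smooth and compactly supported — exactly the framework of local Tate integrals from Section~\ref{subsec:local-tate} in each variable separately (for $v\notin S$ one also has the factor $\delta_v$, but off a finite set of places this is harmless and the same model applies; the statement is for a single arbitrary $v$).

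First I would establish absolute convergence for $s\in\Tube_{>0}^{\mathscr A}$: on each chart, the integrand is dominated by $\prod_i\abs{y_i}_v^{\Re(s_{\alpha(i)})-1}$ times a bounded function of compact support, and $\int_{\abs{y}\le 1}\abs y_v^{r-1}\,\mathrm dy$ converges for $r>0$; summing over the finite cover gives convergence and, by the standard Morera/dominated-convergence argument, holomorphy on $\Tube_{>0}^{\mathscr A}$. Second, for the meromorphic continuation, I would note that on each chart the one-variable factor $\int\abs{y_i}_v^{s_\alpha-1}\Psi(\cdots)\,\mathrm dy_i$ is, up to the smooth cutoff, a local $\zeta$-integral $\zeta(\Phi,\abs\cdot^{s_\alpha})$ for a suitable $\Phi\in\mathscr T(F_{\alpha,w})$, and Proposition~\ref{prop:tate-local} (Tate's local functional equation) shows that after dividing by $\zeta_{F_\alpha,v}(s_\alpha)=\zeta_{F_{\alpha,w},v}(s_\alpha)$ it extends holomorphically past $\Re(s_\alpha)=0$. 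More directly, one writes $\int_{\abs{y_i}\le1}\abs{y_i}^{s_\alpha-1}\Psi\,\mathrm dy_i = \Psi(0)\zeta_{F_\alpha,v}(s_\alpha) + \int_{\abs{y_i}\le1}\abs{y_i}^{s_\alpha-1}(\Psi(y_i)-\Psi(0))\,\mathrm dy_i$, and the remainder converges (hence is holomorphic) for $\Re(s_\alpha)>-1$ because $\Psi(y_i)-\Psi(0)=O(\abs{y_i}_v)$. Carrying this out in each coordinate direction simultaneously, and summing over charts, shows that $\mathbf s\mapsto\prod_{(\alpha,w)\in\mathscr A_v}\zeta_{F_\alpha,v}(s_\alpha)^{-1}\hat H_v(0;(s_\alpha+\rho_\alpha-1))$ is holomorphic on $\Tube_{>-1}^{\mathscr A}$.

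The main obstacle is bookkeeping rather than analysis: one must check that on overlaps of charts the contributions glue consistently (most cleanly by fixing a smooth partition of unity on $X(F_v)$ subordinate to the cover and absorbing it into the smooth factor $\Psi$), and that the exponents appearing really are $s_\alpha$ with the declared shift $\rho_\alpha-1$ — this is where the identity $-\div(\omega_X)=\sum\rho_\alpha D_\alpha$ from~\ref{subsubsec.add-setup1} enters, comparing the pullback of the gauge form $\mathrm d\mathbf x$ to $\prod\mathrm dy_i$. One also has to observe that the normalization of the local Haar measures and of $\zeta_{F_\alpha,v}$ is consistent with the local different factors $\Nm(\mathfrak d_{F_\alpha})^{-1/2}$, but these are holomorphic nonvanishing constants and do not affect the location of poles. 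All of these points are either routine or were handled in~\cite{chambert-loir-tschinkel2009a}, so the proof is essentially a citation plus the one-variable computation above.
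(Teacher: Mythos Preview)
Your proposal is correct and follows essentially the same route as the paper: the paper's proof simply records the identity $\mathrm d\mathbf x=\prod_{\alpha}\norm{\mathsec f_\alpha}_v^{-\rho_\alpha}\,\mathrm d\tau_X$ (which is exactly your observation about the shift by~$\rho_\alpha-1$ coming from $-\div(\omega_X)=\sum\rho_\alpha D_\alpha$) and then cites Lemma~4.1 and Proposition~4.2 of~\cite{chambert-loir-tschinkel2009a} for the convergence and the meromorphic continuation, respectively. What you have written is a faithful sketch of what those cited results establish---the chart-by-chart reduction to one-variable Igusa integrals and the subtraction trick $\Psi(y_i)-\Psi(0)=O(\abs{y_i}_v)$---so your argument and the paper's differ only in that you unpack the citation rather than invoke it.
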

\begin{proof}
With the notation of
Lemma~4.1 in~\cite{chambert-loir-tschinkel2009a},
we have 
\[ \mathrm d\mathbf x=\mathrm d\tau_{(X,D),v} = 
\prod_{\alpha\in\mathscr A} \norm{\mathsec f_\alpha(\mathbf x)}_v^{-\rho_\alpha}
\,\mathrm d\tau_X. \]
Consequently, the convergence assertion,
follows from this Lemma  
by taking for~$\Phi$ the function~$\delta_v$ introduced above.
The meromorphic continuation is then a particular case
of Proposition~4.2 of that paper.
\end{proof}

\subsubsection{Places in~$S$}

We fix a place $v\in \Val(F)$ such that $v\in S$.
For $\lambda\in\R_{>0}^{\mathscr A}$, let 
\[ a(\lambda,\rho) = \max_{\alpha\in\mathscr A} \frac{\rho_\alpha-1}{\lambda_\alpha} \]
and let $\mathscr A(\lambda,\rho)$ be the set of all $\alpha\in\mathscr A$
where the maximum is achieved. By Lemma~\ref{lemm.local-meromorphic},
the integral defining $\hat H_v(0;s\lambda)$ 
converges for any complex number~$s$ such that $\Re(s)>a(\lambda,\rho)$,
defines a holomorphic function of~$s$ 
in the tube domain~$\Tube_{>a(\lambda,\rho)}$,
and has a meromorphic continuation to a tube~$\Tube_{>a(\lambda,\rho)-\delta}$,
for some positive real number~$\delta>1/\max(\lambda_\alpha)$, 
with a pole of order at most~$\#\mathscr A_v$ at $s=a(\lambda,\rho)$.

In order to state a precise answer, let
us introduce the simplicial complex
$\Clan_{F_v,(\lambda,\rho)}(X\setminus G)$ 
obtained from~$\Clan_{F_v}(X\setminus G)$ by removing
all faces containing a vertex~$(\alpha,w)\in\mathscr A_v$ such
that $\rho_\alpha-1< a(\lambda,\rho) \lambda_\alpha$.

\begin{prop}
There exist a positive real number~$\delta$, and for each face~$A$
of~$\Clan_{F_v,(\lambda,\rho)}(X\setminus G)$ 
of maximal dimension, a holomorphic
function~$\phi_{A}$ defined on~$\Tube_{>a(\lambda,\rho)-\delta}$
with polynomial growth in vertical strips
such that
\[ \phi_{A}(a(\lambda,\rho)) 
 =  \int_{D_A(F_v)} \prod_{(\alpha,w)\not\in A} \norm{\mathsec f_{(\alpha,w)}(\mathbf x)}_v^{a(\lambda,\rho)\lambda_\alpha-1} \mathrm d\tau_{D_A,v}(\mathbf x)
 \]
and such that for any $s\in\Tube_{>a(\lambda,\rho)}$, one has
\[
  \hat H_v(0;s\lambda  ) 
 = \sum_A  \phi_{A}(s) \prod_{(\alpha,w)\in A} \zeta_{F_{\alpha,w}}(\lambda_\alpha (s-a(\lambda,\rho)) ) , \]
where the sum ranges over the faces~$A$ of~$\Clan_{F_v,(\lambda,\rho)}(D)$
of maximal dimension.
In particular, the order of the pole of~$\hat H_v(0;\lambda s)$ at $s=a(\lambda,\rho)$ is given by
\[ b_v(\lambda,\rho) = 1+\dim \Clan_{F_v,(\lambda,\rho)}(D). \]
\end{prop}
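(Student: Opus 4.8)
The plan is to reduce the evaluation of~$\hat H_v(0;s\lambda)$ to a finite sum of local integrals over analytic charts of the compact manifold~$X(F_v)$, in each of which it factors as a product of one-dimensional Tate integrals times a smooth density that is holomorphic in~$s$, and then to combine the one-variable theory of Section~\ref{sect:dim1} with the study of geometric Igusa integrals carried out in~\cite{chambert-loir-tschinkel2009a}. Concretely, using $\mathrm d\mathbf x=\prod_{\alpha}\norm{\mathsec f_\alpha}_v^{-\rho_\alpha}\,\tau_{X,v}$, as in the proof of Lemma~\ref{lemm.local-meromorphic}, one first writes $\hat H_v(0;s\lambda)=\int_{X(F_v)}\prod_{\alpha}\norm{\mathsec f_\alpha(\mathbf x)}_v^{\lambda_\alpha s-\rho_\alpha}\,\mathrm d\tau_{X,v}(\mathbf x)$, and then chooses a finite partition of unity subordinate to a covering by polydisk charts adapted to the strict normal crossings divisor~$(X\setminus G)_{F_v}$. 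On the chart meeting the components indexed by $B\subset\mathscr A_v$ one has $\norm{\mathsec f_{(\alpha,w)}}_v=\abs{y_{(\alpha,w)}}\,u_{(\alpha,w)}$ for $(\alpha,w)\in B$, with $u_{(\alpha,w)}$ a smooth unit, while $\norm{\mathsec f_\beta}_v$ is a smooth unit for $\beta\notin B$; hence the chart contributes $\int\prod_{(\alpha,w)\in B}\abs{y_{(\alpha,w)}}^{\lambda_\alpha s-\rho_\alpha}\,g_B(y,s)\,\mathrm dy$, where $s\mapsto g_B(y,s)$ is holomorphic and, for $\Re(s)$ in a fixed compact set, $g_B(\cdot,s)$ ranges over a bounded family of Schwartz functions (it gathers the smooth density of~$\tau_{X,v}$, the units~$u_{(\alpha,w)}$, and the smooth factors $\norm{\mathsec f_\beta}_v^{\lambda_\beta s-\rho_\beta}$).

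Next I would integrate the coordinates $y_{(\alpha,w)}$, $(\alpha,w)\in B$, one at a time, expanding the remaining factor about each vanishing locus: by Proposition~\ref{prop:tate-local} and the explicit formula for~$\zeta_{F_{\alpha,w}}$, this produces for each $(\alpha,w)\in B$ a factor $\zeta_{F_{\alpha,w}}(\lambda_\alpha s-\rho_\alpha+1)$ together with a remainder holomorphic slightly further to the left. This factor is regular at $s=a(\lambda,\rho)$ unless $\rho_\alpha-1=a(\lambda,\rho)\lambda_\alpha$, \ie unless $(\alpha,w)$ survives in~$\Clan_{F_v,(\lambda,\rho)}(X\setminus G)$, in which case $\lambda_\alpha s-\rho_\alpha+1=\lambda_\alpha(s-a(\lambda,\rho))$ and $\zeta_{F_{\alpha,w}}$ has a simple pole there. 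Grouping the chart contributions by the face~$A$ of surviving components that they meet, and absorbing the remainders and the contributions of non-maximal faces into those of the maximal ones, yields the claimed identity $\hat H_v(0;s\lambda)=\sum_A\phi_A(s)\prod_{(\alpha,w)\in A}\zeta_{F_{\alpha,w}}(\lambda_\alpha(s-a(\lambda,\rho)))$; here $\delta>0$ is taken below the gap between $a(\lambda,\rho)$ and the next value $(\rho_\beta-1)/\lambda_\beta$, so that on~$\Tube_{>a(\lambda,\rho)-\delta}$ the exponents attached to the non-surviving boundary components stay strictly larger than~$-1$ and the transverse smooth integrals defining the~$\phi_A$ converge, and their polynomial growth in vertical strips follows from the uniform estimates of~\cite{chambert-loir-tschinkel2009a} (all poles on the vertical arithmetic progressions being carried by the explicit factors $\zeta_{F_{\alpha,w}}$).

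For the leading term, I would fix a maximal face~$A$ and integrate out its coordinates: this produces the residue constants~$\mathrm c_{F_{\alpha,w}}$ at $s=a(\lambda,\rho)$ and leaves a smooth integral over charts of the submanifold~$D_A(F_v)$. The normalization of the residue measures~$\tau_{D_A,v}$ in~\cite{chambert-loir-tschinkel2009a}, which incorporates the constants~$\mathrm c_{F_{\alpha,w}}$ and the accompanying boundary twists, is precisely what is needed for its value at $s=a(\lambda,\rho)$ to equal $\int_{D_A(F_v)}\prod_{(\alpha,w)\notin A}\norm{\mathsec f_{(\alpha,w)}(\mathbf x)}_v^{a(\lambda,\rho)\lambda_\alpha-1}\,\mathrm d\tau_{D_A,v}(\mathbf x)$. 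Since each $\zeta_{F_{\alpha,w}}(\lambda_\alpha(s-a(\lambda,\rho)))$ has a simple pole at $s=a(\lambda,\rho)$ while~$\phi_A$ is holomorphic there, the term indexed by~$A$ has a pole of order $\#A=\dim A+1$; taking the maximum over the maximal faces gives pole order $\leq 1+\dim\Clan_{F_v,(\lambda,\rho)}(D)=b_v(\lambda,\rho)$, and this order is attained with no cancellation, since for a top-dimensional face~$A$ the manifold~$D_A(F_v)$ is nonempty by the very definition of the analytic Clemens complex, so that $\phi_A(a(\lambda,\rho))$ is the integral of a strictly positive function against a positive measure.

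I expect the main obstacle to be the bookkeeping in the last step: verifying that the iterated one-dimensional residues, with their normalizing constants~$\mathrm c_{F_{\alpha,w}}$ and the shifts they impose on the exponents of the surviving boundary sections, reassemble exactly into the residue measures~$\tau_{D_A,v}$, and that the grouping of chart contributions by faces is compatible with the gluing of those measures over~$D_A(F_v)$. This is precisely the content of the geometric Igusa integral formalism of~\cite{chambert-loir-tschinkel2009a}, so the statement should follow from the results of that paper together with the one-variable theory recalled above; everything else --- convergence, holomorphy on the enlarged tube, polynomial growth, and the order of the pole --- is then routine.
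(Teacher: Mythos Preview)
Your proposal is correct and aligns with the paper's approach: the paper's proof consists of a single sentence deferring to Proposition~\ref{prop.igusa-limit} of~\cite{chambert-loir-tschinkel2009a}, and what you have written is essentially a sketch of how that proposition is proved there --- partition of unity in charts adapted to the normal crossings divisor, iterated one-variable Tate integrals producing the factors~$\zeta_{F_{\alpha,w}}$, and identification of the leading coefficient with the residue measure~$\tau_{D_A,v}$. You even say as much in your final paragraph, so there is no divergence to report.
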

\begin{proof}
This is a special case of~\cite{chambert-loir-tschinkel2009a},
Proposition~4.3.
\end{proof}

 \subsubsection{Places outside~$S$; Denef's formula}

When $v\not\in S$, we first refine
the statement of Lemma~\ref{lemm.local-meromorphic}, by
taking into account
the compactly supported function~$\delta_v$ on~$U(F_v)$.
The same proof shows that the inequalities $\Re(s_\alpha)>\rho_\alpha-1$, 
for $\alpha\in\mathscr A_D$, ensure the
absolute convergence of the integral $\hat H_v(0;\mathbf s)$.

Moreover, for places of good reduction, \ie, places~$v\not\in T\cup S$, 
we may apply Denef's formula 
(Proposition~4.5 of~\cite{chambert-loir-tschinkel2009a}) and obtain
the explicit formula
\begin{equation}
\hat H_v(0;(s_\alpha)) = (q_v^{-1}\mu_v(\mathfrak o_v))^{\dim X}
 \sum_{A\subset (\bmA \setminus\bmA_D)/\Gamma_{v}}
  \# \mathscr D_A^0(k_v) \prod_{\alpha\in A} \frac{q_v^{f_\alpha}-1}{q_v^{f_\alpha (s_\alpha-\rho_\alpha)}-1}. \end{equation}
(An $\alpha\in A$ corresponds to an $F_v$-irreducible component
of~$(X\setminus G)_{F_v}$ which is not contained in~$D_{F_v}$.
By the good reduction hypothesis, this component is split over an unramified extension of~$F_v$, of degree~$f_\alpha$.)

 \subsubsection{Euler product}
We have explained in~\cite{chambert-loir-tschinkel2009a},
Section~4.3.3, how to derive
from this explicit formula the analytic behavior
of the infinite product of~$\hat H_v(0,\mathbf s)$ over all places~$v\not\in S$.
Following the proof of 
Proposition~4.10 in that paper,
we obtain:
\begin{prop}
The infinite product 
\[ \hat H^S(0;\mathbf s)= \prod_{v\not\in S} \hat H_v(0;\mathbf s) \]
converges absolutely for any $\mathbf s\in\C^{\mathscr A}$ such that
$\Re(s_\alpha)>\rho_\alpha+1$ for all $\alpha\in\mathscr A\setminus\mathscr A_D$.
Moreover, there exists a holomorphic function~$\phi^S$ 
defined on the set of~$\mathbf s\in\C^{\mathscr A}$ 
such that $\Re(s_\alpha)>\rho_\alpha+1/2$ for $\alpha\in\mathscr A\setminus\mathscr A_D$ 
which has polynomial growth in vertical strips and satisfies
\[ \hat H^S(0;\mathbf s) = \phi^S(\mathbf s) \prod_{\alpha\not\in\mathscr A_D} \zeta_{F_\alpha}^S(s_\alpha-\rho_\alpha). \]
Moreover, for any point~$\mathbf s\in\C^{\mathscr A}$ such that
$s_\alpha=\rho_\alpha+1$ for all $\alpha\not\in\mathscr A_D$, one has
\[ \phi^S(\mathbf s) = \prod_{\alpha\not\in\mathscr A_D} \zeta^{S,*}_{F_\alpha}(1) \int_{U(\AD_F^S)} \prod_{v\not\in S}\delta_v(\mathbf x)\,
 \prod_{\alpha\not\in\mathscr A_D} \prod_{v\not\in S}
      \norm{\mathsec f_\alpha(\mathbf x)}_v^{s_\alpha-1}\,
  \mathrm d\tau_U(\mathbf x). \]
\end{prop}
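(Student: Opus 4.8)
The plan is to follow, in the present additive setting, the proof of Proposition~\ref{prop.igusa-adelic-merom} of~\cite{chambert-loir-tschinkel2009a}: split the Euler product over the places of good reduction and over the finitely many remaining ones, and inside each good local factor isolate the part that produces the poles.

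For $v\notin T\cup S$, Denef's formula expresses $\hat H_v(0;\mathbf s)$ as a finite sum, over the faces~$A$ of the geometric Clemens complex of $X\setminus G$ whose vertices avoid~$\bmA_D$, of $(q_v^{-1}\mu_v(\mathfrak o_v))^{\dim X}\,\#\mathscr D^0_A(k_v)\prod_{\alpha\in A}(q_v^{f_\alpha}-1)/(q_v^{f_\alpha(s_\alpha-\rho_\alpha)}-1)$. The empty face contributes $(q_v^{-1}\mu_v(\mathfrak o_v))^{\dim X}\#G(k_v)$, which equals~$1$ for all but finitely many~$v$; and by the Lang--Weil estimates $\#\mathscr D^0_{(\alpha,w)}(k_v)=q_v^{\dim X-1}+O(q_v^{\dim X-3/2})$ precisely when the component $D_{(\alpha,w)}$ is geometrically irreducible over~$F_v$, i.e. $f_{\alpha,w}=1$, and is $O(q_v^{\dim X-2})$ otherwise, while $\#\mathscr D^0_A(k_v)=O(q_v^{\dim X-\#A})$ for $\#A\ge1$. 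I would therefore write
\[ \hat H_v(0;\mathbf s)=\Big(\prod_{\substack{(\alpha,w)\in\mathscr A_v\\ \alpha\notin\mathscr A_D}}\zeta_{F_{\alpha,w},v}(s_\alpha-\rho_\alpha)\Big)E_v(\mathbf s), \]
the displayed product over the~$(\alpha,w)$ above a given~$\alpha$ being the local factor $\zeta_{F_\alpha,v}(s_\alpha-\rho_\alpha)$; because the factors $1-q_v^{-f_{\alpha,w}(s_\alpha-\rho_\alpha)}$ cancel the only poles of~$\hat H_v(0;\mathbf s)$ produced by Denef's formula, $E_v$ is holomorphic in~$\mathbf s$.

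The technical heart is the uniform estimate $E_v(\mathbf s)=1+O(q_v^{-1-\delta})$ for $\mathbf s$ in a vertical strip with $\Re(s_\alpha)\ge\rho_\alpha+1/2+\delta$ for all $\alpha\notin\mathscr A_D$ (the coordinates indexed by $\mathscr A_D$ being irrelevant at good~$v$). This is the step I expect to be the main obstacle: having used the empty- and one-element faces to rebuild the zeta factors, one must bound everything that is left — the Lang--Weil remainders on those edges, the faces with $\#A\ge2$, and above all the faces attached to vertices with $f_{\alpha,w}>1$ — by $q_v^{-1-\delta}$ on that half-space, and it is exactly this that pins down the threshold $\Re(s_\alpha)>\rho_\alpha+1/2$ and that forces one to carry uniformity in~$\Im(\mathbf s)$ so as to get the polynomial growth. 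Granting it, $\prod_{v\notin T\cup S}E_v$ converges locally uniformly on $\{\Re(s_\alpha)>\rho_\alpha+1/2\}$ to a holomorphic function of polynomial growth in vertical strips, while for each $\alpha\notin\mathscr A_D$ one has $\prod_{v\notin T\cup S}\zeta_{F_\alpha,v}(s_\alpha-\rho_\alpha)=\zeta_{F_\alpha}^{T\cup S}(s_\alpha-\rho_\alpha)$.

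Finally I would fold in the finitely many factors $\hat H_v(0;\mathbf s)$ with $v\in T\setminus S$: each is holomorphic for $\Re(s_\alpha)>\rho_\alpha-1$ by Lemma~\ref{lemm.local-meromorphic} applied with the compactly supported~$\delta_v$, and bounded on vertical strips since~$v$ is finite. Multiplying these in and restoring the missing local factors of the $\zeta_{F_\alpha}$ at $v\in T\setminus S$, so that $\zeta_{F_\alpha}^{T\cup S}$ becomes $\zeta_{F_\alpha}^S$, gives the factorisation $\hat H^S(0;\mathbf s)=\phi^S(\mathbf s)\prod_{\alpha\notin\mathscr A_D}\zeta_{F_\alpha}^S(s_\alpha-\rho_\alpha)$ with $\phi^S$ holomorphic and of polynomial growth on $\{\Re(s_\alpha)>\rho_\alpha+1/2\}$; the absolute convergence of $\hat H^S(0;\mathbf s)$ for $\Re(s_\alpha)>\rho_\alpha+1$ then follows from that of the factors $\zeta_{F_\alpha}^S(s_\alpha-\rho_\alpha)$. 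To evaluate $\phi^S$ at the point~$\mathbf s$ with $s_\alpha=\rho_\alpha+1$ for $\alpha\notin\mathscr A_D$, where each $\zeta_{F_\alpha}^S(s_\alpha-\rho_\alpha)$ has a simple pole with leading coefficient $\zeta_{F_\alpha}^{S,*}(1)$ while $\phi^S$ is regular, I pass to the limit in each variable: since $\hat H^S(0;\mathbf s)=\prod_{v\notin S}\hat H_v(0;\mathbf s)$, $\prod_{\alpha\notin\mathscr A_D}\zeta_{F_\alpha}^S(s_\alpha-\rho_\alpha)=\prod_{v\notin S}\prod_{\alpha\notin\mathscr A_D}\zeta_{F_\alpha,v}(s_\alpha-\rho_\alpha)$, and $\mathrm L_v(1,\EP(U))=\prod_{\alpha\notin\mathscr A_D}\zeta_{F_\alpha,v}(1)^{-1}$ by Lemma~\ref{lemm.P(U)-additif}, the value $\phi^S$ at that point equals the convergent Euler product $\prod_{v\notin S}\big(\hat H_v(0;\mathbf s)\,\mathrm L_v(1,\EP(U))\big)$; rewriting each $\hat H_v(0;\mathbf s)$ as the local integral of $\delta_v\prod_{\alpha\notin\mathscr A_D}\norm{\mathsec f_\alpha}_v^{s_\alpha-1}$ against~$\tau_{U,v}$ through the relation between the gauge-form measure and~$\tau_{U,v}$ recorded in~\cite{chambert-loir-tschinkel2009a}, and using the definition $\tau_U^S=\mathrm L_*(1,\EP(U))^{-1}\prod_{v\notin S}\mathrm L_v(1,\EP(U))\tau_{U,v}$ together with the convergence stated in Theorem~\ref{prop.convergence} of that paper, one arrives at the asserted expression, the product of residues $\prod_{\alpha\notin\mathscr A_D}\zeta_{F_\alpha}^{S,*}(1)$ being produced by the normalisation constant $\mathrm L_*(1,\EP(U))$. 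As the excerpt indicates, one may alternatively simply invoke Proposition~\ref{prop.igusa-adelic-merom} of~\cite{chambert-loir-tschinkel2009a}.
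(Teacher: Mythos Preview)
Your proposal is correct and follows exactly the approach the paper indicates: the paper's own proof is simply the sentence ``Following the proof of Proposition~\ref{prop.igusa-adelic-merom} in~\cite{chambert-loir-tschinkel2009a}, we obtain\dots'', and you have reconstructed that argument (Denef's formula at good places, Lang--Weil to control the error~$E_v$, absorption of the bad places $v\in T\setminus S$, and the limit computation for the value of~$\phi^S$). Your identification of the uniform estimate $E_v(\mathbf s)=1+O(q_v^{-1-\delta})$ as the technical heart, and of the role of the non-split components ($f_{\alpha,w}>1$) in fixing the threshold $\Re(s_\alpha)>\rho_\alpha+1/2$, is on the mark.
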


\subsection{The Fourier transforms at non-trivial characters}

  \subsubsection{Vanishing of the Fourier transforms outside of a lattice}

We proceed with an elementary, but important, remark --- 
we assume that the written integrals converge absolutely.
\begin{lemm}
For each finite place~$v$, there exists a compact open subgroup $\mathfrak d_{X,v}\subset G(F_v)$
such that $\hat H_v(\mathbf a;\mathbf s)=0$ for $\mathbf a\not\in\mathfrak d_{X,v}$.
Moreover, $\mathfrak d_{X,v}=G(\mathfrak o_v)$ for almost all finite places~$v$.

There exists a lattice~$\mathfrak d_X\subset G(F)$ such that $\hat H(\mathbf a;\mathbf s)=0$
for $\mathbf a\not\in \mathfrak d_X$.
\end{lemm}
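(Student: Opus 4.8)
The statement is a straightforward consequence of the local structure of the Fourier integrals $\hat H_v(\mathbf a;\mathbf s)$ at finite places, combined with an integrality argument at the places of good reduction. First I would treat a single finite place $v$. The integrand in $\hat H_v(\mathbf a;\mathbf s)$ is the product of $\delta_v(\mathbf x)$ (supported on the compact set $\mathfrak u_v\subset X(F_v)$, hence on a compact subset of $G(F_v)$ once we intersect with $G$), the metric factors $\prod_\alpha\norm{\mathsf f_\alpha(\mathbf x)}_v^{s_\alpha}$, and the additive character $\psi_v(\langle\mathbf a,\mathbf x\rangle)$. Since the smooth adelic metrics are locally constant at finite places and $\delta_v$ is the characteristic function of a compact open set, the function $\mathbf x\mapsto \delta_v(\mathbf x)\prod_\alpha\norm{\mathsf f_\alpha(\mathbf x)}_v^{s_\alpha}$ is locally constant with compact support on $G(F_v)$; call its support $K_v$ and let $\delta>0$ be such that this function is constant on cosets of $\varpi_v^N\mathfrak o_v^n$ (i.e.\ on balls of radius $\delta$). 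Writing the integral as a finite sum over such cosets inside $K_v$, each summand is a constant times $\int_{\mathbf y+\varpi_v^N\mathfrak o_v^n}\psi_v(\langle\mathbf a,\mathbf x\rangle)\,\mathrm d\mathbf x$, which is the integral of an additive character over a translate of a compact open subgroup, hence vanishes unless $\psi_v(\langle\mathbf a,\varpi_v^N\mathbf x\rangle)\equiv 1$ on $\mathfrak o_v^n$. The latter condition — by definition of the different $\mathfrak d_{F_v}$ and the fact that $\psi_v$ has conductor $\mathfrak d_{F_v}^{-1}$ — cuts out a compact open subgroup $\mathfrak d_{X,v}\subset G(F_v)$ depending only on $N$ (hence only on $v$ and the metrics, not on $\mathbf s$), outside of which every summand, hence $\hat H_v(\mathbf a;\mathbf s)$, vanishes. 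This is exactly the first assertion.

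\emph{Good reduction.} For the ``almost all'' claim, fix the set $T$ of places where the integral model $\mathscr X$, the divisors $\mathscr D_\alpha$, the sections $\mathsf f_\alpha$, the gauge form $\mathrm d\mathbf x$, and the chosen metrics are all defined and well-behaved (this $T$ was constructed in Section~\ref{sect:integral-points}). For $v\notin T\cup S$ one has $\mathfrak u_v=\mathscr U(\mathfrak o_v)$, the metrics $\norm{\mathsf f_\alpha}_v$ are the model metrics (so $\norm{\mathsf f_\alpha(\mathbf x)}_v=1$ for $\mathbf x\in G(\mathfrak o_v)$, i.e.\ on the integral points of the open orbit), $\mathrm d\mathbf x$ gives $G(\mathfrak o_v)$ volume a power of $\Nm(\mathfrak d_{F_v})^{-1/2}=1$, and $\psi_v$ is trivial on $\mathfrak o_v^n$ but nontrivial on $\varpi_v^{-1}\mathfrak o_v^n$. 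The computation above then shows $\mathfrak d_{X,v}=G(\mathfrak o_v)$, since $N$ can be taken to be $0$ on the open orbit part and one checks (using Denef-type stratification of $\mathscr U(\mathfrak o_v)$, or more simply that the contribution of the boundary strata to $\hat H_v$ is already an explicit rational function supported on $G(\mathfrak o_v)$ once $\Re(s_\alpha)$ is large) that nothing outside $G(\mathfrak o_v)$ contributes. This yields the second sentence.

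\emph{Globalization.} Finally, set $\mathfrak d_X=G(F)\cap\prod_v \mathfrak d_{X,v}$, where for the (finitely many) archimedean places and places in $S$ we put $\mathfrak d_{X,v}=G(F_v)$, and for finite $v\notin S$ we use $\mathfrak d_{X,v}$ as above, equal to $G(\mathfrak o_v)$ for $v\notin T\cup S$. Since $\prod_v\mathfrak d_{X,v}$ is a compact open subgroup of $G(\Afin)$ times $G(F\otimes\R)$, its intersection with the discrete cocompact subgroup $G(F)$ is a lattice in $G(F\otimes\R)$, hence a lattice $\mathfrak d_X$ in $G(F)$ in the stated sense. If $\mathbf a\in G(F)$ is not in $\mathfrak d_X$, then $\mathbf a\notin\mathfrak d_{X,v}$ for some finite $v\notin S$, so the factor $\hat H_v(\mathbf a_v;\mathbf s)=0$ and therefore the product $\hat H(\mathbf a;\mathbf s)=\prod_v\hat H_v(\mathbf a_v;\mathbf s)$ vanishes. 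This completes the proof.

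\emph{Main obstacle.} The only genuinely delicate point is the good-reduction claim $\mathfrak d_{X,v}=G(\mathfrak o_v)$ for $v\notin T\cup S$: one must check that the boundary strata (where the metrics are nontrivial and $\delta_v$ still supported) do not enlarge the set of $\mathbf a$ for which the integral is nonzero — this is where the normalization of $\psi_v$ (conductor exactly $\mathfrak o_v$ at good places) and the transversality of the integral model of $D$ enter, essentially via the same coset-integral vanishing as in Lemma~\ref{lemm:prec} applied on each stratum. Everything else is bookkeeping of compactness and local constancy.
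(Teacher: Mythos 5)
Your argument follows the same route as the paper's: the local integrand is invariant under translation by a compact open subgroup $K_v\subset G(F_v)$ (equal to $G(\mathfrak o_v)$ for almost all $v$), hence the Fourier transform is supported on the dual compact open subgroup $\mathfrak d_{X,v}$, and intersecting $\prod_v\mathfrak d_{X,v}$ with the discrete cocompact subgroup $G(F)$ yields the lattice. The paper disposes of the local invariance in one line by quoting Proposition~4.2 of \cite{chambert-loir-t2002}, so your write-up is essentially an expansion of the same proof.

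One step is false as stated, though fixable: the function $\mathbf x\mapsto\delta_v(\mathbf x)\prod_\alpha\norm{\mathsf f_\alpha(\mathbf x)}_v^{s_\alpha}$ does \emph{not} have compact support in $G(F_v)$ in general. The set $\mathfrak u_v=\mathscr U(\mathfrak o_v)$ is indeed compact in the proper space $X(F_v)$, but its intersection with the open orbit $G(F_v)$ is unbounded whenever $\mathscr A_D\neq\mathscr A$, since it contains points arbitrarily close to the divisors $D_\alpha$ with $\alpha\notin\mathscr A_D$; the inference ``compact in $X(F_v)$, hence compact in $G(F_v)$ after intersecting with $G$'' is exactly backwards. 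Consequently your coset decomposition is an infinite sum, not a finite one. This does not derail the proof: in the domain of absolute convergence one may still integrate coset by coset, or, more cleanly, substitute $\mathbf x\mapsto\mathbf x+\mathbf k$ for $\mathbf k\in K_v$ to get $\hat H_v(\mathbf a;\mathbf s)=\psi_v(\langle\mathbf a,\mathbf k\rangle)\,\hat H_v(\mathbf a;\mathbf s)$, which forces vanishing as soon as $\psi_v(\langle\mathbf a,\cdot\rangle)$ is nontrivial on $K_v$. With that correction (and noting that the same invariance argument already handles the boundary strata at good-reduction places, so the ``delicate point'' you flag is not actually delicate), your good-reduction and globalization paragraphs match the paper.
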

\begin{proof}
For each finite place~$v$, the function $\mathbf x\mapsto H_v(\mathbf x;\mathbf s)$ 
on~$G(F_v)$  is invariant under the action of an open subgroup of~$G(F_v)$,
which equals $G(\mathfrak o_v)$ for almost all~$v$. (See~\cite{chambert-loir-t2002}, Proposition~4.2.)
Consequently, the Fourier transform vanishes at any character whose restriction
to this open subgroup is non-trivial. This establishes the claim.
\end{proof}

\subsubsection{Archimedean places: integration by parts}\label{sec:non-trivial-varch}
In order to be able to prove the convergence of the right
hand side of the Poisson formula (Eq.~\eqref{eqn:poisson})
we need to improve the decay at infinity of the Fourier transforms
at archimedean places of~$F$. 

Let $v$ be such a place.
As in~\cite{chambert-loir-t2002}, we use integration by parts
with respect to vector fields on~$X$ which extend the
invariant vector fields on~$G$.
According to Prop.~2.1 in~\loccit,  any invariant
vector field on~$G$ extends uniquely 
to a regular vector field~$\partial^X$ on~$X$;
 moreover,  Prop.~2.2 there asserts that 
for any local equation~$z_\alpha$ of
a boundary component~$D_\alpha$, $z_\alpha^{-1}\partial^X z_\alpha$
is regular along~$D_\alpha$.
As in Prop.~8.4 of this paper,
we can perform repeated integration
by parts and write, for $\mathbf s$ in the domain of absolute convergence:
\begin{equation}
\label{eq:fourier-varch}
 \hat H_v(\mathbf a;\mathbf s)= (1+\norm {\mathbf a}_v)^{-N}\int_{G(F_v)}
   H_v(\mathbf x;\mathbf s)^{-1} \psi_v(\langle\mathbf a;\mathbf x\rangle)
   h_N(\mathbf a;\mathbf x;\mathbf s)\,\mathrm d\mathbf x\end{equation}
where $h_N$ is a smooth function on~$X(F_v)$ which admits
a uniform upper-bound of the form
\[ \abs{h_N(\mathbf a;\mathbf x;\mathbf s)}
 \ll (1+\norm {\mathbf s})^N .\]

The important observation to make is that 
these integration by parts preserve the form of the Fourier integrals
when they are written in local coordinates.
Consequently, we can apply the techniques
developed in Section~\ref{sec.non-trivial-S}
to the integral expression~\eqref{eq:fourier-varch}
of the Fourier transform.
This shows that  the upper-bounds for the meromorphic continuation
established in that Section can be improved
by a factor 
$$
(1+\norm {\mathbf s})^N/(1+\norm {\mathbf a}_v)^N,
$$ 
where $N$ is an arbitrary
positive integer.

 \subsubsection{Places outside~$S$}
Let $f_{\mathbf a}$ be the rational function on $X$ corresponding to the 
linear form $\langle \mathbf a,\cdot\rangle$ on~$G$. Its divisor takes the form
\[ \div(f_{\mathbf a})= E(f_{\mathbf a})-\sum_{\alpha\in \mathscr A}d_\alpha(\mathbf a) D_{\alpha},  \]
with $d_\alpha(\mathbf a)\ge 0$ for all $\alpha \in \mathscr A$,
and  $E(f_{\mathbf a})$ is the Zariski closure in~$X$ of the hyperplane
with equation $\langle\mathbf a,\cdot\rangle=0$ in~$ G$.
(See~\cite{chambert-loir-t2002}, Lemma~1.4.)
For any~$\mathbf a$, let $\mathscr A_0^D(\mathbf a)$ be
the set of~$\alpha\in\mathscr A\setminus\mathscr A_D$ such
that $d_\alpha(\mathbf a)=0$. 
We also define $T(\mathbf a)$
to be the union of~$S$, $T$ and of the set of finite places~$v\not\in S\cup T$
such that $\mathbf a$ reduces to~$0$ modulo~$v$.

\begin{prop}\label{prop.estimatewithcharacters}
There exists a constant $C(\eps)$ independent of
$\mathbf a\in \mathfrak d_X$
such that for any $v\not\in T(\mathbf a)$
and any~$\mathbf s\in\C^{\mathscr A}$ 
such that $\Re(s_\alpha)>\rho_\alpha-\frac12+\eps$ for any~$\alpha$,
\[  \abs{ 1 - \hat H_v( {\mathbf a} ; \mathbf s )
 \prod_{\alpha\in \mathcal A_0^D(\mathbf a)} 
(1-q_v^{-f_\alpha (1+s_\alpha-\rho_\alpha)})  }
     \leq C(\eps) q_v^{-1-\eps}. \]
\end{prop}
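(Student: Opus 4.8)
The plan is to compute $\hat H_v(\mathbf a;\mathbf s)$ by the same orbit decomposition used for the trivial character in Denef's formula, keeping track of the extra phase factor $\psi_v(\langle\mathbf a,\mathbf x\rangle)$, and then to show that this phase only affects the strata where $f_{\mathbf a}$ has a zero or pole along the boundary. First I would work on the integral model $\mathscr X$ over $\mathfrak o_v$ (valid since $v\notin T(\mathbf a)$, in particular $v\notin T$), stratify $X(F_v)=G(\mathfrak o_v)$-worth of points by reduction, and recall that since $v\notin T(\mathbf a)$ the linear form $\langle\mathbf a,\cdot\rangle$ has $v$-integral coefficients not all divisible by~$\varpi_v$, so $f_{\mathbf a}$ reduces to a nonzero linear form mod~$v$. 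The measure identity $\mathrm d\mathbf x=\prod_\alpha\norm{\mathsec f_\alpha}_v^{-\rho_\alpha}\,\mathrm d\tau_{X,v}$ lets me rewrite the Fourier integral as an integral over $\mathscr X(\mathfrak o_v)$ against $\prod_\alpha\norm{\mathsec f_\alpha}_v^{s_\alpha-\rho_\alpha}\psi_v(f_{\mathbf a})$, and I partition $\mathscr X(\mathfrak o_v)$ according to which boundary components $\mathscr D_\alpha$ the reduction lies on, i.e. over the $\Gamma_v$-orbits $A\subset(\bmA\setminus\bmA_D)/\Gamma_v$ (the components of $D$ contribute nothing because $\delta_v$ kills them, exactly as in the trivial-character computation).

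The key point is the local analysis on each stratum. On the open stratum $\mathscr D_\emptyset^0(k_v)$-worth of points — i.e. the part reducing into $G$ mod~$v$ — the function $\langle\mathbf a,\mathbf x\rangle$ takes values in $\mathfrak o_v$ (integrality) and the phase $\psi_v$ of an $\mathfrak o_v$-valued quantity is trivial, so this piece contributes exactly what it does for $\mathbf a=0$, namely the "main" term $(q_v^{-1}\mu_v(\mathfrak o_v))^{\dim X}\#\mathscr D^0_\emptyset(k_v)$ up to the usual normalization; similarly each stratum $A$ with $A\subset\mathscr A_0^D(\mathbf a)$ (where $d_\alpha(\mathbf a)=0$, so $f_{\mathbf a}$ is a unit along $D_\alpha$) contributes its Denef term with the same Euler factor $(q_v^{f_\alpha}-1)/(q_v^{f_\alpha(s_\alpha-\rho_\alpha)}-1)$ and — crucially — on such a stratum the extra variable $x_\alpha$ along which we integrate $\abs{x_\alpha}^{s_\alpha-\rho_\alpha}$ appears in the phase only through $f_{\mathbf a}$, which does not vanish there, so $\psi_v$ is locally constant and the integral is unchanged. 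On every remaining stratum, either $f_{\mathbf a}$ has a pole along some $D_\alpha\notin\mathscr A_0^D(\mathbf a)$ — and then the oscillatory integral of the form $\int\abs{x}^{s-1}\psi_v(u/x^d)\,\mathrm dx$ controlled by Proposition~\ref{lemm.irregular} produces a bound $\ll\abs{\text{unit}\cdot\varpi_v^{\,?}}^{-1/d}$, hence a saving — or the reduction lies in $E(f_{\mathbf a})$, which is a proper subvariety of dimension $\dim X-1$, so the count of such $k_v$-points is $O(q_v^{\dim X-1})$, again a saving of a factor $q_v^{-1}$ relative to the measure normalization $q_v^{-\dim X}$.

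Putting these together: $\hat H_v(\mathbf a;\mathbf s)$ equals its "expected" value, which is the product over $\alpha\in\mathscr A_0^D(\mathbf a)$ of the geometric factors $(1-q_v^{-f_\alpha(1+s_\alpha-\rho_\alpha)})^{-1}$ times a main term that is $1+O(q_v^{-1})$ (the $\dim X=0$ or $\#\mathscr D^0_\emptyset(k_v)=q_v^{\dim X}+O(q_v^{\dim X-1})$ expansion of the Weil bound / Lang–Weil estimate on the split torus part), plus error terms each bounded by $C(\eps)q_v^{-1-\eps}$ uniformly for $\Re(s_\alpha)>\rho_\alpha-\tfrac12+\eps$ — the $\eps$ is needed precisely to convert a factor $q_v^{-1/2}$ coming from stationary-phase or Weil-type square-root cancellation into $q_v^{-1-\eps}$ when combined with the $q_v^{\Re(s_\alpha-\rho_\alpha)}\ge q_v^{-1/2+\eps}$ available from the half-plane. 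Multiplying through by $\prod_{\alpha\in\mathscr A_0^D(\mathbf a)}(1-q_v^{-f_\alpha(1+s_\alpha-\rho_\alpha)})$ and subtracting from~$1$ gives the claimed inequality, the constant $C(\eps)$ absorbing the (uniformly bounded, independent of $\mathbf a$ and~$v$) number of strata, the Weil-bound implied constants, and $\prod(1+q_v^{-\Re(f_\alpha(1+s_\alpha-\rho_\alpha))})$.

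\textbf{Main obstacle.} The delicate step is the uniformity: showing the stationary-phase/cancellation saving on the "bad" strata is genuinely $q_v^{-1-\eps}$ and not merely $q_v^{-1/2}$, uniformly in $\mathbf a$ (whose $v$-adic size is a priori unbounded once we leave $T(\mathbf a)$) and uniformly in $\mathbf s$ in the stated half-plane. This is where one must invoke the quantitative oscillatory estimates (Lemma~\ref{lemm:prec}, Lemma~\ref{lemme2}, Proposition~\ref{lemm.irregular}) carefully in several boundary variables at once, tracking that the exponent $-1/d$ in the decay, combined with $\Re(s_\alpha-\rho_\alpha)>-\tfrac12+\eps$, always beats $q_v^{-1}$; the reduction $v\notin T(\mathbf a)$ is exactly what guarantees $f_{\mathbf a}$ does not degenerate mod~$v$, so that the "phase" $u$ appearing in those lemmas is a $v$-adic unit and the decay is governed purely by the pole order $d=d_\alpha(\mathbf a)$ along the boundary, not by the size of~$\mathbf a$.
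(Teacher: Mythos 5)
Your proposal follows essentially the same route as the paper, whose proof simply splits the integral over the residue classes in $\mathscr U(k_v)$ and computes each piece as in \cite{chambert-loir-t2002}, Prop.~10.2 and \S11: trivial phase on the strata where $d_\alpha(\mathbf a)=0$, oscillatory cancellation on the strata where $f_{\mathbf a}$ has a pole, Weil-type point counts, and the half-plane condition $\Re(s_\alpha)>\rho_\alpha-\frac12+\eps$ to turn the square-root saving into $q_v^{-1-\eps}$. One small correction: on the bad strata the saving does not come from the $\abs a^{-1/d}$ decay of Proposition~\ref{lemm.irregular} (which is vacuous here, since for $v\not\in T(\mathbf a)$ the relevant coefficient is a $v$-adic unit), but rather --- as you do say at the end --- from the exact vanishing of the residue-class integrals at depth $\geq 2$ (Lemma~\ref{lemm:prec}) combined with Weil bounds on the depth-one exponential sums.
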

\begin{proof}
When $X=U$, \ie, when $U$ is projective and $D=\emptyset$,
this has been proved in~\cite{chambert-loir-t2002},
see Proposition~10.2 and \S11 there. 
A straightforward adaptation of that proof establishes
the general case: by the definition of~$\delta_v$, 
we split the integral as a sum of integrals over the 
residue classes in~$\mathscr U(k_v)$ and each of these integrals
is computed in~\cite{chambert-loir-t2002},
leading to the asserted formula.
\end{proof}

As a consequence, we obtain the following meromorphic continuation
of the infinite product over places~$v\not\in S$ of $\hat H_v(\mathbf a;\mathbf s)$.
\begin{coro}\label{coro.otherchars}
For any $\eps >0$ and $\mathbf a\in \mathfrak d_X \setminus \{ 0\}$
there exists a holomorphic bounded function $\phi(\mathbf a;\cdot)$
on $\Tube_{>-1/2+\eps}^{\mathscr A}$ such
that for any $\mathbf s\in \Tube_{>0}^{\mathscr A}$,
\[ \hat H^S({\mathbf a};\mathbf s+\rho)=\prod_{v\not\in S}
 \hat H_v( {\mathbf a} ; \mathbf s+\rho ) = 
\phi({\mathbf a} ; \mathbf s )\prod_{\alpha\in \mathcal A_0^D(\mathbf a)}
     \zeta_{F_\alpha}(1+s_\alpha). \]
Moreover, there exist a real number~$C(\eps)$ such that
one has the uniform estimate
\[ \abs{\phi({\mathbf a};\mathbf s)} \leq C(\eps)
     (1+\norm{\mathbf a}_\infty)^{\eps}. \]
\end{coro}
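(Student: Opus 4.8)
The plan is to deduce Corollary~\ref{coro.otherchars} from Proposition~\ref{prop.estimatewithcharacters} by the standard device of writing the Euler product as a product of ``zeta-type'' factors times a correction factor, and then controlling the correction factor uniformly in~$\mathbf a$. First I would split the product $\prod_{v\not\in S}\hat H_v(\mathbf a;\mathbf s+\rho)$ into the places in $T(\mathbf a)\setminus S$ and the places outside $T(\mathbf a)$. The places in $T(\mathbf a)\setminus S$ are finite in number (and their number is $\ll \log\norm{\mathbf a}_\infty$ by a divisor-counting estimate, since $\mathbf a\not\equiv 0$ modulo $v$ for $v$ outside a set depending on the height of $\mathbf a$); each local factor $\hat H_v(\mathbf a;\mathbf s+\rho)$ is bounded and holomorphic on $\Tube_{>-1/2+\eps}^{\mathscr A}$ by Lemma~\ref{lemm.local-meromorphic} (applied at $\mathbf s+\rho$, which lands in $\Tube_{>-1/2}^{\mathscr A}$), with a bound that can be taken of the shape $1+Cq_v^{-1+\eps}$ or so; taking the product over this finite set contributes a factor bounded by $(1+\norm{\mathbf a}_\infty)^{O(\eps)}$.

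For the tail over $v\not\in T(\mathbf a)$, Proposition~\ref{prop.estimatewithcharacters} gives, for $\Re(s_\alpha)>-\tfrac12+\eps$ (after shifting by $\rho_\alpha$),
\[ \hat H_v(\mathbf a;\mathbf s+\rho)\prod_{\alpha\in\mathscr A_0^D(\mathbf a)}(1-q_v^{-f_\alpha(1+s_\alpha)}) = 1 + O(q_v^{-1-\eps}), \]
uniformly in $\mathbf a$. Hence
\[ \prod_{v\not\in T(\mathbf a)} \hat H_v(\mathbf a;\mathbf s+\rho)
= \Big(\prod_{v\not\in T(\mathbf a)} \prod_{\alpha\in\mathscr A_0^D(\mathbf a)}(1-q_v^{-f_\alpha(1+s_\alpha)})^{-1}\Big)
\cdot \prod_{v\not\in T(\mathbf a)}\big(1+O(q_v^{-1-\eps})\big), \]
where the second product converges absolutely and locally uniformly on $\Tube_{>-1/2+\eps}^{\mathscr A}$ to a bounded holomorphic function, with a bound \emph{independent} of $\mathbf a$ (this is where the uniformity in Proposition~\ref{prop.estimatewithcharacters} is essential). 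The first product is, up to finitely many Euler factors at places in $T(\mathbf a)\setminus S$ and up to the local factors $\zeta_{F_\alpha,v}(1+s_\alpha)$ being grouped over the places $w\mid v$ of $F_\alpha$, exactly $\prod_{\alpha\in\mathscr A_0^D(\mathbf a)}\zeta_{F_\alpha}(1+s_\alpha)$ with finitely many bad Euler factors removed. Multiplying back the (boundedly many, each boundedly large) removed Euler factors and the contribution of $T(\mathbf a)\setminus S$ computed above, I define $\phi(\mathbf a;\mathbf s)$ to be $\hat H^S(\mathbf a;\mathbf s+\rho)$ divided by $\prod_{\alpha\in\mathscr A_0^D(\mathbf a)}\zeta_{F_\alpha}(1+s_\alpha)$; the analysis shows it extends holomorphically and boundedly to $\Tube_{>-1/2+\eps}^{\mathscr A}$.

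The uniform estimate $\abs{\phi(\mathbf a;\mathbf s)}\le C(\eps)(1+\norm{\mathbf a}_\infty)^\eps$ is the main point, and it has two sources of $\mathbf a$-dependence to control. One is the number of ``extra'' Euler factors, namely $\Card(T(\mathbf a)\setminus(S\cup T))$: these are the finite places $v$ at which $\mathbf a$ reduces to $0$, i.e. $v$ divides all coordinates of $\mathbf a$, so their product of residue characteristics divides the content of $\mathbf a$ and hence $\prod_{v\in T(\mathbf a)\setminus(S\cup T)} q_v \ll \norm{\mathbf a}_\infty^{n}$ roughly, giving $\Card(T(\mathbf a)\setminus(S\cup T))\ll \log\norm{\mathbf a}_\infty/\log\log\norm{\mathbf a}_\infty$; each local factor at such $v$ is bounded by $1+O(q_v^{-1/2+\eps})$, so their product is bounded by $\exp(O(\sum q_v^{-1/2+\eps}))$ which in the worst case is $\ll\norm{\mathbf a}_\infty^{O(\eps)}$ — re-absorbing this into $\eps$ (or rather, proving the sharper bound $\prod(1+O(q_v^{-1/2+\eps}))\ll_\eps \norm{\mathbf a}_\infty^{\eps'}$ for any $\eps'>0$ by choosing the implicit exponent small) gives the claimed shape. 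The second source, the set $\mathscr A_0^D(\mathbf a)$ and the identity of the fields $F_\alpha$, varies over only finitely many possibilities as $\mathbf a$ ranges over $\mathfrak d_X\setminus\{0\}$, so the bounds can be taken uniform over these. The main obstacle is precisely this bookkeeping: pinning down $T(\mathbf a)$, showing the product of correction factors over $T(\mathbf a)\setminus S$ grows no faster than $(1+\norm{\mathbf a}_\infty)^\eps$, and checking that the ``good'' tail product is bounded uniformly in $\mathbf a$ — all of which rest on the uniformity in the constant $C(\eps)$ of Proposition~\ref{prop.estimatewithcharacters}. This is a routine but careful adaptation of the analogous argument (\S11) in~\cite{chambert-loir-t2002}.
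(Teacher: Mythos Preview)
Your proposal is correct and matches the paper's intended argument (which simply states the corollary as a consequence of Proposition~\ref{prop.estimatewithcharacters}, invoking \S11 of~\cite{chambert-loir-t2002} without further detail): split off the finitely many places of~$T(\mathbf a)\setminus S$, apply Proposition~\ref{prop.estimatewithcharacters} to the tail to get an absolutely convergent product of factors~$1+O(q_v^{-1-\eps})$, and control the finite product via a divisor-count bound on~$\Card(T(\mathbf a)\setminus(S\cup T))$. The only imprecisions are that your appeal to Lemma~\ref{lemm.local-meromorphic} for places in~$T(\mathbf a)\setminus S$ should go through the trivial bound $\abs{\hat H_v(\mathbf a;\cdot)}\leq\hat H_v(0;\Re(\cdot))$ (that lemma is stated for~$\mathbf a=0$), and the local bound at good~$v\in T(\mathbf a)$ coming from Denef's formula is~$1+O(q_v^{-1/2-\eps})$ rather than~$1+O(q_v^{-1+\eps})$ --- but you effectively correct this in your final paragraph, and since $\sum_{v\in T(\mathbf a)}q_v^{-1/2}=o(\log\norm{\mathbf a}_\infty)$ the conclusion~$(1+\norm{\mathbf a}_\infty)^{\eps}$ is unaffected.
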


\subsection{The Fourier transforms at non-trivial characters (places in~$S$)}
\label{sec.non-trivial-S}
 
 
Here we study the Fourier transforms for a place~$v\in S$,
when the character~$\mathbf a$ is non-trivial.  We will prove
uniform estimates in~$\mathbf a$. When no confusion
can arise, we will remove the index~$v$ from the notation.
Written in local charts of the compactification, our 
integrals take the form
\[ \int_{\mathscr U_A} \prod_{\alpha\in A} \abs{x_\alpha}^{\lambda_\alpha s-\rho_\alpha} \psi_v(f_{\mathbf a}(\mathbf x))  \theta_A(\mathbf x;\mathbf s)\,
\prod_{\alpha\in A}\mathrm dx_\alpha\, \mathrm d\mathbf y, \]
where $\mathbf x=((x_\alpha)_{\alpha\in A},\mathbf y)$ is a system of local coordinates
for~$\mathbf x$ in a neighborhood of a point of $D_A^\circ(F_v)$,
and $\theta_A$ a smooth function with compact support in~$\mathbf x$,
which is holomorphic and has polynomial growth in vertical strips
with respect to the parameter~$\mathbf s$.
See Section~3.3 of~\cite{chambert-loir-tschinkel2009a} for more details.
Note that the stratum~$A$ of the $F_v$-analytic Clemens complex of~$X\setminus G$
consists of elements $\tilde\alpha=(\alpha,w)\in\mathscr A_v$,
the local coordinate~$x_{\tilde\alpha}$ is an element of the local field~$F_{\alpha,w}$; these coordinates are completed by the family~$\mathbf y=(y_\beta)$.

In the following exposition, we assume throughout that all
irreducible components of~$X\setminus G$ are geometrically irreducible.
Then, the local fields $F_{\alpha,w}$ all become~$F_v$, etc.,
simplifying the notation.
(See~\cite{chambert-loir-t2002} for an example of the required adaptation.)

The analytic properties of this integral are determined by
the  divisor of the rational function~$f_{\mathbf a}$. 
The analysis is simpler if this divisor has strict normal crossings;
we can reduce to this case using embedded resolution of singularities.
To obtain uniform estimates, we do this in families.

 The functions $\mathbf a\mapsto d_\alpha(\mathbf a)$ on~$G\setminus\{0\}$
 are constructible,
 upper semi-continuous (they do not increase under specialization),
 and descend to the projective space~$\mathbf P^{n-1}$ of
 lines in~$ G$.

\begin{lemm}[Application of resolution of singularities]
There exists a decomposition~$\mathbf P^{n-1}=\coprod_i P_i$ of~$\mathbf P^{n-1}$ in locally closed
subsets, and, for each $i$,
a map $Y_i\ra P_i\times X$ which is a composition of blowups
whose  centers lie over a nowhere dense subset 
of~$P_i\times (U\setminus G)$ and
are smooth over~$P_i$, such that 
the divisor of the rational functions~$f_{\mathbf a}$ on~$Y_i$
is a relative divisor with normal crossings on~$P_i$.
\end{lemm}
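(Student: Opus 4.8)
The statement is a relative version of Hironaka's embedded resolution, applied to the family of rational functions $f_{\mathbf a}$ parametrized by $\mathbf P^{n-1}$. The plan is to reduce to the classical, absolute statement of embedded resolution over a field of characteristic zero, applied fibrewise, and then to use generic flatness/constructibility to organize the resolution data into a constructible stratification of the base $\mathbf P^{n-1}$ on each piece of which the resolution can be taken to vary algebraically. First I would set up the universal family: over the generic point $\eta$ of $\mathbf P^{n-1}$ the function field $F(\mathbf P^{n-1})$ carries the ``generic'' linear form $\langle\mathbf a,\cdot\rangle$, and $f_{\mathbf a}$ is a rational function on $X_{F(\mathbf P^{n-1})}$ whose divisor, by Lemma~1.4 of~\cite{chambert-loir-t2002}, is supported on $E(f_{\mathbf a})$ together with boundary components. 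Apply embedded resolution of singularities over the field $F(\mathbf P^{n-1})$ to make the total transform of this divisor a strict normal crossings divisor; this produces a composition of blowups $Y_\eta\to X_{F(\mathbf P^{n-1})}$ with smooth centers supported over the boundary (away from $G$, since over $G$ the function $f_{\mathbf a}$ is a nonvanishing linear form on an affine space, hence already has smooth — in fact empty — divisor after removing the hyperplane; one keeps the centers nowhere dense in $P_i\times(U\setminus G)$).

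**Spreading out.** The second step is to spread this generic resolution out: since $Y_\eta$ and all the intermediate blowups are finitely presented over $F(\mathbf P^{n-1})=\varinjlim \mathscr O(P)$ for $P$ ranging over nonempty affine open subsets of $\mathbf P^{n-1}$, standard limit arguments (\cite[EGA IV]{} — but I will just invoke ``generic flatness and constructibility'') show there is a dense open $P_0\subset \mathbf P^{n-1}$ over which the whole tower of blowups $Y_0\to P_0\times X$ is defined, with centers smooth over $P_0$, nowhere dense in $P_0\times(U\setminus G)$, and such that the divisor of $f_{\mathbf a}$ on $Y_0$ is a relative normal crossings divisor over $P_0$ — this last property (relative snc) being itself a constructible condition on the base, so after possibly shrinking $P_0$ it holds identically. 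Then I would pass to the complement $\mathbf P^{n-1}\setminus P_0$, which is a closed subscheme of strictly smaller dimension, stratify it into its irreducible components (with reduced structure), and run the same argument on each; since the functions $\mathbf a\mapsto d_\alpha(\mathbf a)$ are constructible and lower under specialization, on each stratum the combinatorics of $\div(f_{\mathbf a})$ is constant, so the generic resolution over the generic point of that stratum genuinely controls the family. Noetherian induction on $\dim \mathbf P^{n-1}$ terminates the process and yields the finite decomposition $\mathbf P^{n-1}=\coprod_i P_i$ together with the maps $Y_i\to P_i\times X$.

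**The main obstacle.** The one genuinely substantive point — everything else being bookkeeping with constructible sets and limit arguments — is ensuring that the centers of the blowups can be kept \emph{smooth over the base $P_i$} and \emph{nowhere dense in the boundary}, rather than merely smooth as abstract schemes. This is where one uses that embedded resolution in characteristic zero is functorial (canonical, in the sense of Bierstone--Milman or W\l odarczyk): the resolution commutes with smooth base change, hence in particular with the smooth morphism $P_i\times X\to X$ in the directions where the family is ``constant'', which forces the centers to be $P_i$-smooth on each stratum where the relevant invariants are locally constant; and since $f_{\mathbf a}$ is regular and nowhere zero on $G$ (it is a nonzero linear form), no blowup is needed over $G$, so the centers stay inside the boundary $U\setminus G$ and, being lower-dimensional in the fibres, are nowhere dense there. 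I expect the write-up to consist mostly of citing functorial embedded resolution for the smoothness-over-$P_i$ claim and citing constructibility of ``relative snc'' for the stratification, with the Noetherian induction gluing the pieces; the delicate balance is just making sure the ``nowhere dense over $P_i\times(U\setminus G)$'' condition is preserved at every blowup, which follows because at each stage the new center lies in the strict transform of the (lower-dimensional) locus where the divisor is not yet snc.
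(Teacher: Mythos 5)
Your proposal is correct and follows essentially the same route as the paper: resolve the pair over the function field of $\mathbf P^{n-1}$, spread the tower of blowups out over a dense open subset where the centers remain smooth over the base, note that nothing happens over $G$ or over the generic points of the boundary, and conclude by noetherian induction on the complement. The only inessential difference is your appeal to functorial (canonical) resolution to guarantee smoothness of the centers over the base; the paper gets this for free by taking the Zariski closures of the generic centers and shrinking to the dense open locus where they are smooth, which suffices in characteristic zero.
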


\begin{proof}
First apply Hironaka's theorem 
and perform an embedded resolution of singularities of the 
pair~$(X,\abs{\div(f_{\mathbf a})})$
over the function field of~$\mathbf P^{n-1}$.  Then 
spread out this composition of blow-ups with smooth centers 
by considering the composition
of the blow-ups of their Zariski closures in~$\mathbf P^{n-1}\times X$.
Let $P_1$ be the largest open subset of~$\mathbf P^{n-1}$ over which
all of these centers are smooth; it is dense.
We also observe that the map $Y_1\ra P_1\times X$ 
is an isomorphism over~$G$, 
as well as over the generic points of~$X\setminus G$, because
the divisor of~$f_{\mathbf a}$ is smooth there. 

We repeat this procedure for each irreducible
component of~$\mathbf P^{n-1}\setminus P_1$. 
This process ends by noetherian induction.
\end{proof}

Let us return to our integrals. We fix one of the strata, say~$P$,
and derive uniform estimates for $[\mathbf a]$ in~$P$.
Note that all coefficients~$d_\alpha(\mathbf a)$ are constant on~$P$;
they  will be denoted~$d_\alpha$. We write $\pi\colon Y\ra P\times X$
for the resolution of singularities introduced in the Lemma.
Considering the normalization~$\bar P$ of the closure of~$P$ in~$\mathbf P^{n-1}$,
and the successive blow-ups along the Zariski closure of the
successive centers (which may not be smooth on~$\bar P$ anymore), we extend it 
to a map $\bar\pi\colon\bar Y\ra\bar P\times X$.
We change variables and write our integral as an integral on
the fiber~$\bar Y_{[\mathbf a]}$ over the point~$[\mathbf a]\in P$.
This is relatively innocuous as long as $[\mathbf a]$ lies in a compact
subset of~$P(F_v)$; when $[\mathbf a]$ approaches the
boundary $\partial P=\bar P\setminus P$, 
the upper-bounds we obtain increase, but at most polynomially in the distance to
the boundary.

After resolution of singularities, we can write
the divisor $\div(f_{\mathbf a})$ as 
\[ - \sum_{\alpha\in A}d_\alpha D_\alpha + \sum_{\beta\in B} e_\beta E_{\beta} ,\]
where  
the divisors~$D_\alpha$ are the strict transforms of the one with
the same name, and the divisors~$E_{\beta}$ are either the closure in~$Y$ 
of the hyperplane~$G\cap \div(f_{\mathbf a})$ of~$G$ (for $\beta=0\in B$),
or exceptional divisors of the resolution of singularities (for all other values of~$\beta$). 
The integers~$e_\beta$ are unknown a priori, but
do not depend on~$[\mathbf a]\in P$.
Moreover, the divisor of the Jacobian  of~$\pi$ has the form
\[ \sum_{\beta\in B} \iota_\beta E_{\beta}, \]
where $\iota_\beta>0$ unless $\beta=0$.
For each $\alpha\in A$, let $d_{\alpha,\beta}$ be nonnegative integers such that
\[ \pi^* D_\alpha = D_\alpha+ \sum_{\beta\in B} d_{\alpha,\beta} E_\beta. \]
For each $\beta\in B$ we set
\[ \lambda_\beta(s) = \sum_{\alpha\in A} d_{\alpha,\beta} (\lambda_\alpha s-\rho_\alpha)
                  + \iota_{\beta}. \]
We observe that $\lambda_0\equiv 0$, but that $\lambda_\beta$
is positive at $\sigma=\max(\rho_\alpha/\lambda_\alpha)$, otherwise.

Let us consider local coordinates 
$((x_\alpha)_{\alpha\in A},(y_\beta)_{\beta\in B},(z_\gamma)_{\gamma\in C})$
 around~$\mathbf x$, where for each~$\alpha$,
$x_\alpha$ is a local equation of~$D_\alpha$,
for each~$\beta\in B$, $y_\beta$ is a local equation of~$E_\beta$. 
By construction, the function 
\[ u_{\mathbf a}\colon x\mapsto \prod_{\alpha\in A} x_\alpha^{d_\alpha}\prod_{\beta\in B} y_\beta^{-e_{\beta}} f_{\mathbf a} \]
is regular in codimension~$1$, hence regular, on~$\bar P\times \mathscr U_A$, 
and it does not vanish on~$P\times \mathscr U_A$.
Moreover, $u_{\mathbf a}$ is homogeneous of degree~$1$ in~$\mathbf a$.
Consequently, and maybe up to shrinking~$\mathscr U_A$ a little bit, there exists
a real number~$\kappa$ such that $u_{\mathbf a}$  
admits uniform lower-  and upper-bounds
of the form
\[ \norm{\mathbf a} d([\mathbf a],\partial P)^{\kappa} \ll \abs{u_{\mathbf a}(\mathbf x)} 
\ll \norm {\mathbf a} , \]
for $\mathbf x\in \mathscr U_A$ and $\mathbf a\in P$.

Our integrals
now take the form
\begin{eqnarray*}
\int_{\mathscr U_A} \prod_{\alpha\in A} \abs{x_\alpha}^{\lambda_\alpha s-\rho_\alpha} 
  \prod_{\beta\in B} \abs{y_\beta}^{\lambda_\beta(s)} 
  \psi_v(u_{\mathbf a} \prod x_{\alpha}^{-d_\alpha} \prod y_\beta^{e_\beta}) \theta_A(\mathbf x;\mathbf s)\,
\\
\prod_{\alpha\in A}\mathrm dx_\alpha\, \prod_{\beta\in B}\mathrm dy_\beta
\, \prod_{\gamma\in C} \mathrm dz_\gamma. 
\end{eqnarray*}

If some of the $d_\alpha$ are zero, these variables do not
appear in the argument of the character~$\psi$. 
We first integrate with respect to these
variables, applying the method used for the trivial character 
to obtain a meromorphic continuation with poles at most
as in the product $\prod_{d_\alpha=0} \zeta_{F_\alpha}(\lambda_\alpha s-\rho_\alpha+1)$.
Let 
\[ \sigma=\max_{d_\alpha=0} (\rho_\alpha-1)/\lambda_\alpha \]
be the abscissa of the largest pole of this product.

We now  explain why having some positive $d_\alpha$
implies \emph{holomorphic} continuation with respect 
to the corresponding variables~$s_\alpha$.
The discussion distinguishes two cases.
\begin{enumerate}
\item 
\emph{All $e_\beta$ are~$\leq 0$.}

In this case, 
we first integrate with respect to a variable~$x_{\alpha_0}$ such 
that $d_{\alpha_0}>0$ and use Lemma~\ref{lemm.irregular}.
This shows that our integral is equal to another integral of the form
\[
\int
  \prod_{\substack{\alpha\in A\\ \alpha\neq\alpha_0}}
              \abs{x_{\alpha}}^{\lambda_{\alpha}s-\rho_\alpha}
  \prod_{\beta\in B} \abs{y_\beta}^{\lambda_\beta(s)}
  F(\mathbf x';\mathbf a;\mathbf s)
\prod_{\substack{\alpha\in A\\\alpha\neq\alpha_0}}
     \mathrm dx_\alpha\, \prod_{\beta\in B}\mathrm dy_\beta 
        \, \prod_{\gamma\in C} \mathrm dz_\gamma .\]
Here, $F$ is some Schwartz function
of the parameters
$$
\mathbf x'
=((x_\alpha)_{\alpha\neq\alpha_0}; (y_\beta)_{\beta\in B};
(z_\gamma)_{\gamma\in C}),
$$ 
which is continuous in~$\mathbf a$
and~$\mathbf s$ and such that  $\abs{F(\mathbf x';\mathbf a;\mathbf s)}$ is bounded by
\[  
 \ll\left( \norm{ {\mathbf a} }
 \prod_{\substack{\alpha\in A \\ \alpha\neq\alpha_0}} \abs{x_\alpha}^{-d_\alpha} \prod_{\beta\in B} \abs{y_\beta}^{e_\beta}
\right)^{-1/{d_{\alpha_0}}} \!\!\!\!\!\! \!\!\!\!\!\! \!\!\!
 \\ \norm{\mathbf a}^{-1/d_{\alpha_0}}
   \prod_{\alpha\neq\alpha_0} \abs{x_\alpha}^{d_\alpha/d_{\alpha_0}}
   \prod_{\beta} \abs{y_\beta}^{e_\beta/d_{\alpha_0}}.  
\]
Consequently, 
the exponent of all variables $x_\alpha$, for $\alpha\neq\alpha_0$,
has been increased
by the positive quantity $d_{\alpha}/d_{\alpha_0}$,
leading to absolute convergence in a neighborhood of~$\sigma$.

\item
\emph{Some $e_\beta$ is positive.}

The strategy here is to first integrate
with respect to these variables $y_\beta$ such that~$e_\beta > 0$. 
Let us apply to the inner integral $G(\mathbf x;\mathbf a;\mathbf s)$
the bound for oscillatory integrals 
that we established in Proposition~\ref{lemm.weylII-s};
we obtain an upper-bound of the form 
\[ \norm{\mathbf a}^{-\kappa} \prod_\alpha \abs{x_\alpha}^{\kappa d_\alpha}, \]
where $\kappa$ is a positive real number.
This implies that our initial integral can be computed as
\[ \int \prod_{\alpha\in A} \abs{x_\alpha}^{\lambda_\alpha s-\rho_\alpha}
 G(\mathbf x;\mathbf a;\mathbf s) \,\prod_{\alpha\in A}\mathrm dx_\alpha. \]
The gain~$\kappa d_\alpha$ in the exponent of~$\abs{x_\alpha}$
is enough to insure absolute convergence in a neighborhood
of~$\sigma$, as claimed.
\end{enumerate}
In both cases, there exists a positive real number~$\delta$
such that the remaining integral converges absolutely, and uniformly, 
to a holomorphic function on the half-plane $\Re(s)>\sigma-\delta$,
with polynomial decay in terms of~$\norm{\mathbf a}$.
As a consequence, we obtain:

\begin{prop}\label{prop.non-trivial-S}
Let $\mathbf a$ be a non-trivial character and $v\in S$. 
Then there exist holomorphic functions  $\phi_{A,v}$, defined
for $\Re(s)>\sigma-\delta$, such that
\[ \hat H_v(\mathbf a;s\lambda) = \sum_{\substack{A\in\Clan_{F_v}(X\setminus G)
\\ d_\alpha(\mathbf a)=0\, \forall\alpha\in A \\ \text{$A$ maximal}}}
\phi_{A,v}(\mathbf a;s) \prod_{\tilde\alpha=(\alpha,w)\in A} \zeta_{F_{\tilde\alpha}}( \lambda_\alpha s-\rho_\alpha+1).
\]
Moreover, $\phi_{A,v}$ satisfy upper-bounds
of the form
\[ \abs{\phi_{A,v}(\mathbf a;s)} \ll  
  \frac{ (1+\abs s)^\kappa }{ d_v([\mathbf a],\partial P_{\mathbf a})^\kappa  \norm{\mathbf a}_v^{\kappa'}}, \]
where $P_{\mathbf a}$ is the stratum of~$\mathbf P^{n-1}$
containing the line~$[\mathbf a]$, $d_v(([\mathbf a],\partial P_{\mathbf a})$ 
the $v$-adic distance of~$[\mathbf a]$ to the boundary
$\partial P_{\mathbf a}=\overline{P_{\mathbf a}}\setminus P_{\mathbf a}$,
and $\kappa,\kappa'$ positive absolute constants.
\end{prop}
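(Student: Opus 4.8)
The plan is to follow the reduction already sketched above: write $\hat H_v(\mathbf a;s\lambda)$ as a finite sum, indexed by the faces $A$ of $\Clan_{F_v}(X\setminus G)$ and by local charts near points of $D_A^\circ(F_v)$, of Igusa-type integrals with oscillating phase $\psi_v(f_{\mathbf a})$; then apply the resolution of singularities from the preceding Lemma, carried out in families over the locally closed stratification $\mathbf P^{n-1}=\coprod_i P_i$, to replace $\div(f_{\mathbf a})$ by a relative normal crossings divisor; and finally integrate the pulled-back integral one variable at a time, treating separately the coordinates that occur in the phase and those that do not.

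First I would fix a stratum $P=P_i$, a face $A$ and a chart, pull back along $\bar\pi\colon\bar Y\to\bar P\times X$ and restrict to the fibre over $[\mathbf a]$, obtaining an integral of the shape
\[ \int_{\mathscr U_A} \prod_{\alpha\in A}\abs{x_\alpha}^{\lambda_\alpha s-\rho_\alpha}\prod_{\beta\in B}\abs{y_\beta}^{\lambda_\beta(s)}\, \psi_v\Bigl(u_{\mathbf a}\prod_{\alpha}x_\alpha^{-d_\alpha}\prod_\beta y_\beta^{e_\beta}\Bigr)\,\theta_A(\mathbf x;\mathbf s)\,\prod_\alpha\mathrm dx_\alpha\,\prod_\beta\mathrm dy_\beta\,\prod_\gamma\mathrm dz_\gamma, \]
where $u_{\mathbf a}$ is the nonvanishing unit with two-sided bound $\norm{\mathbf a}\,d([\mathbf a],\partial P)^{\kappa}\ll\abs{u_{\mathbf a}}\ll\norm{\mathbf a}$. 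The variables $z_\gamma$ and the $x_\alpha$ with $d_\alpha=0$ do not appear inside $\psi_v$; integrating over them by the method already used for the trivial character produces a meromorphic continuation whose only poles in $\Re(s)\le\sigma$ come from $\prod_{d_\alpha=0}\zeta_{F_\alpha}(\lambda_\alpha s-\rho_\alpha+1)$, where $\sigma=\max_{d_\alpha=0}(\rho_\alpha-1)/\lambda_\alpha$. This step already yields the $\zeta$-factors and the indexing by maximal faces $A$ with $d_\alpha(\mathbf a)=0$ for all $\alpha\in A$.

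The heart of the argument is to show that, as soon as some $d_\alpha>0$ or some $e_\beta>0$, integration in the corresponding oscillating variable \emph{improves} the exponents of the remaining variables, so that the leftover integral converges absolutely to a holomorphic function on a strictly larger half-plane $\Re(s)>\sigma-\delta$, with polynomial decay in $\norm{\mathbf a}_v$. I would distinguish the two cases exactly as indicated. If all $e_\beta\le 0$, integrating in an $x_\alpha$ with $d_\alpha>0$ is a $\psi_v(u/x^{d_\alpha})$-type integral, and Lemma~\ref{lemm.irregular} supplies a bound $\ll\abs{u_{\mathbf a}}^{-1/d_\alpha}$ which, after tracking the change of variables, raises the exponent of each remaining $x_{\alpha'}$ by $d_{\alpha'}/d_\alpha$ and contributes a factor $\norm{\mathbf a}_v^{-1/d_\alpha}$ up to a negative power of $d([\mathbf a],\partial P)$. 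If some $e_\beta>0$, integrating first in those $y_\beta$ is a monomial oscillatory integral to which Proposition~\ref{lemm.weylII-s} applies, giving a gain $\norm{\mathbf a}_v^{-\kappa}\prod_\alpha\abs{x_\alpha}^{\kappa d_\alpha}$. In both cases one checks, using $\lambda_\beta>0$ at $\sigma$ for $\beta\neq 0$ and $\rho_\alpha\ge 2$, that the improved exponents push the abscissa of absolute convergence strictly below $\sigma$, and that the bound so obtained has the claimed form $(1+\abs s)^\kappa/\bigl(d_v([\mathbf a],\partial P_{\mathbf a})^\kappa\,\norm{\mathbf a}_v^{\kappa'}\bigr)$. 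Summing over the finitely many strata $P_i$, faces $A$, and charts then gives the proposition.

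The main obstacle I expect is the uniformity bookkeeping in this last step. One must verify that the gains supplied by Lemma~\ref{lemm.irregular} and Proposition~\ref{lemm.weylII-s} are uniform in $\mathbf s$ on vertical strips, in the Schwartz data $\theta_A(\cdot;\mathbf s)$, and polynomially in the distance of $[\mathbf a]$ to $\partial P$; and, crucially, that after integrating out \emph{one} oscillating variable the resulting integrand still belongs to a bounded family of Schwartz functions, so that the estimate can be iterated over all the oscillating coordinates. The interplay between the several exceptional divisors $E_\beta$ (with $e_\beta$ of either sign) and the strict transforms $D_\alpha$ is where the estimates are most delicate, and where the order in which the variables are integrated has to be chosen with care.
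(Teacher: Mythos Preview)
Your proposal is correct and follows essentially the same route as the paper: the stratification of $\mathbf P^{n-1}$ and resolution in families, the local chart decomposition, the separation of variables with $d_\alpha=0$ (handled as for the trivial character) from those entering the phase, and the two-case split according to whether all $e_\beta\le 0$ (Lemma~\ref{lemm.irregular}) or some $e_\beta>0$ (Proposition~\ref{lemm.weylII-s}). One small simplification relative to what you anticipate: no genuine iteration over the oscillating coordinates is needed---in each case a \emph{single} application of the relevant estimate (to one $x_\alpha$ with $d_\alpha>0$, or to all $y_\beta$ with $e_\beta>0$ simultaneously via the multivariable bound) already improves the exponents of \emph{all} remaining $x_{\alpha'}$ by $d_{\alpha'}/d_\alpha$ (resp.\ $\kappa d_{\alpha'}$), after which the leftover integral is bounded in absolute value and converges for $\Re(s)>\sigma-\delta$.
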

Note that in this statement, 
the subsets~$A$ over which the decomposition of~$\hat H_v$ runs
are the maximal faces of the sub-complex of the analytic
Clemens complex (depending on~$\mathbf a$)
given by the vanishing of the coefficients~$d_\alpha(\mathbf a)$.
These faces need not all have the same dimension.

\subsection{Application of the Poisson summation formula}

  \subsubsection{Meromorphic continuation of the height zeta function}
From now on, we consider the case where the height is attached
to the log-anticanonical line bundle~$-(K_X+D)$
with $D=X\setminus U$. Let 
$$
\lambda=\rho-\sum_{\alpha\in\mathscr A_D} D_\alpha
$$
be the corresponding class. In that case, 
the abscissa of convergence of the height zeta function is $\sigma=1$.

Recall that we have decomposed the projective space of 
non-trivial characters~$\mathbf a$ (modulo scalars)
as a disjoint union of locally closed strata~$(P_i)$ and that
on each stratum we obtained a 
uniform meromorphic continuation of $\hat H(\mathbf a;\mathbf s)$.
The height zeta function 
can then be expressed as
\[ 
\mathrm Z(s)= \mathrm Z_0(s) + \sum_P \mathrm Z_P(s)
\] 
where  $\mathrm Z_0(s)=\hat H(0;s\lambda)$ is the term of the right hand side
of the Poisson formula~\eqref{eqn:poisson} corresponding to the trivial character, 
the summation is over all strata,
and the contribution from stratum~$P$ is
\[ 
\mathrm Z_P(s)= \sum_{\substack{\mathbf a\neq 0\\ [\mathbf a]\in P}}
        \hat H({\mathbf a};s\lambda). \]
The following lemma implies that each of $\mathrm Z_P$ 
converges absolutely for $\Re(s)>1$ and admits a meromorphic
continuation to $\Re(s)>1-\delta$, for some $\delta >0$. 

\begin{lemm}
\label{lemm.distance}
Let $Z$ be a closed subvariety of a projective space~$\mathbf P^{n-1}$
over the number field~$F$. 
For each place~$v\in S$, let $d_v(\cdot,Z)$
denote the $v$-adic distance of a point in~$\mathbf P^{n-1}(F_v)$ to 
the subset $Z(F_v)$.

Let $\mathfrak d$ be an $\mathfrak o_F$-lattice in~$F^{n}$ and
let $\norm{\cdot}_\infty$ be any norm on the real vector space
$F^n\otimes_\Q\R$.
Then, there are positive constants~$C$ and~$\kappa$ such that
\[ \prod_{v\in S} d_v([\mathbf a],Z) \geq
        C    (1+\norm{\mathbf a}_\infty )^{-\kappa} \]
for any $\mathbf a\in \mathfrak d\setminus\{0\}$ such that $[\mathbf a]\not\in Z(F)$.
\end{lemm}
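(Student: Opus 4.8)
The plan is to reduce the statement to a height/distance inequality on $\mathbf P^{n-1}$ and then invoke a product formula argument. First I would fix defining equations for $Z$: choose homogeneous polynomials $g_1,\dots,g_r\in F[T_1,\dots,T_n]$, all of some common degree $e$, whose common zero locus is $Z$. For a point $[\mathbf a]\in\mathbf P^{n-1}(F_v)$ with coordinates $\mathbf a=(a_1,\dots,a_n)$, a natural choice for the local distance is
\[
d_v([\mathbf a],Z) = \frac{\max_j \abs{g_j(\mathbf a)}_v^{1/e}}{\max_i\abs{a_i}_v},
\]
up to a bounded factor independent of $\mathbf a$ (any two reasonable definitions of $v$-adic distance to a closed subvariety agree up to such a factor, because they are continuous, vanish exactly on $Z$, and $\mathbf P^{n-1}(F_v)$ is compact — this is the standard comparison one uses, e.g., in the theory of local Weil functions). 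Since $[\mathbf a]\notin Z(F)$, not all $g_j(\mathbf a)$ vanish, so each factor is a strictly positive real number and the product over the finite set $S$ makes sense.

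Next I would bound the product from below using the product formula. For a fixed index $j$ with $g_j(\mathbf a)\neq 0$ (such $j$ exists, though which $j$ works may depend on $\mathbf a$ — one can handle this by taking, for each $\mathbf a$, an index where $\prod_{v\in S}\abs{g_j(\mathbf a)}_v$ is largest, or more cleanly work with $\max_j$ throughout and use that $\prod_{v\in S}\max_j\abs{g_j(\mathbf a)}_v \ge \max_j \prod_{v\in S}\abs{g_j(\mathbf a)}_v$), the product formula for the number field $F$ gives
\[
\prod_{v\in\Val(F)} \abs{g_j(\mathbf a)}_v = 1 \quad\text{if } g_j(\mathbf a)\ne 0,
\]
hence $\prod_{v\in S}\abs{g_j(\mathbf a)}_v \ge \prod_{v\notin S}\abs{g_j(\mathbf a)}_v^{-1}$. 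Because $\mathbf a$ lies in the fixed $\mathfrak o_F$-lattice $\mathfrak d$, after clearing a bounded denominator the coordinates $a_i$ are $S$-integers of bounded denominator, so for $v\notin S$ one has $\abs{a_i}_v\le 1$ (up to finitely many exceptional places absorbed into the constant), whence $\abs{g_j(\mathbf a)}_v\le C_v$ with $\prod_{v\notin S}C_v$ bounded; thus $\prod_{v\in S}\abs{g_j(\mathbf a)}_v \gg 1$. Similarly $\prod_{v\in S}\max_i\abs{a_i}_v \ll \prod_{v\in S}\norm{\mathbf a}_v \ll (1+\norm{\mathbf a}_\infty)^{\#S}$ since the archimedean absolute values are all comparable to a fixed norm and the finitely many $v\in S$ non-archimedean contribute only a bounded amount on the lattice. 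Combining, $\prod_{v\in S} d_v([\mathbf a],Z) \gg (1+\norm{\mathbf a}_\infty)^{-\kappa}$ with $\kappa$ depending only on $e$, $\#S$, and the lattice.

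The main obstacle I expect is bookkeeping rather than conceptual: (i) making precise the comparison between the chosen formula for $d_v$ and whatever normalization of $v$-adic distance is in force in the paper, which requires the compactness of $\mathbf P^{n-1}(F_v)$ and a partition-of-unity / local-coordinates argument; and (ii) handling the dependence of the "good index" $j$ on $\mathbf a$ — the cleanest fix is to carry $\max_j$ inside each local factor from the start and only split it off at the very end via $\prod_{v\in S}\max_j \abs{g_j(\mathbf a)}_v \ge \frac{1}{r}\sum_j \prod_{v\in S}\abs{g_j(\mathbf a)}_v \gg 1$, so that no uniform choice of $j$ is ever needed. Everything else — the product formula, the integrality of lattice points at places outside $S$, the comparability of archimedean norms — is routine, and the final constants $C,\kappa$ are then explicit in terms of the degree of the $g_j$ and $\#S$.
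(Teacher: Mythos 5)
Your argument is correct and follows essentially the same route as the paper's proof: compare $d_v([\mathbf a],Z)$ with $\max_j \abs{g_j(\mathbf a)}_v/\norm{\mathbf a}_v^{\deg}$, apply the product formula to the nonvanishing value $g_j(\mathbf a)$, use integrality of $\mathbf a\in\mathfrak d$ to bound the contribution of places outside~$S$, and control $\prod_{v\in S}\norm{\mathbf a}_v$ by a power of $(1+\norm{\mathbf a}_\infty)$. The only cosmetic differences are that the paper carries the $\max_\phi$ inside the product formula from the start (exactly your suggested fix for the dependence of $j$ on $\mathbf a$) and uses the arithmetic--geometric mean inequality where you invoke equivalence of norms, both yielding the same polynomial bound.
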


\begin{proof}
Let $\Phi$ be a family of homogeneous polynomials 
defining~$Z$ set-theoretically in~$\mathbf P^{n-1}$. 
Let~$\tilde Z$ be the cone over~$Z$ in~$\mathbf A^n$
defined by the same polynomials.
We may assume that they have coefficients in~$\mathfrak o_F$
and that all their degrees are equal to an integer~$d$.
Let $v\in S$;
by the $v$-adic version of \L ojasiewicz inequalities, 
the $v$-adic distance of a point~$[\mathbf a]\in\P^{n-1}(F)$
to~$Z(F_v)$  satisfies 
\[  \log d_v([\mathbf a],Z) \approx \max_{\phi\in\Phi} \log \norm{\phi}_v([\mathbf a]), \]
where
\[ \norm{\phi}_v([\mathbf a]) = \frac{\abs{\phi(\mathbf a)}_v}{\norm {\mathbf a}_v^d}. \]
For our purposes, we thus may replace the distance~$d_v$
by the function $\max_{\phi\in\Phi}\norm{\phi}_v$.
It follows from the product formula that when $\mathbf a$
runs over all elements of~$F^n$ not in~$\tilde Z$,
then
\[ \prod_{w\in\Val(F)} \max_{\phi\in\Phi} \abs{\phi(\mathbf a)}_w \geq 1. \]

When $\mathbf a$ belongs to~$\mathfrak d$ and $w$ is a finite place of~$F$
then $\abs{\phi(\mathbf a)}_w$ is bounded from above, by a constant~$c_w$
which may be chosen equal to~$1$ for almost all~$w$. 
Since $S$ contains all archimedean places of~$F$, 
\[ \prod_{v\in S} \max_{\phi\in\Phi} \abs{\phi(\mathbf a)}_v
         \geq c_1= 1/\prod_{w\not\in S} c_w. \]
Consequently, for all $\mathbf a\in\mathfrak d$ outside~$\tilde Z$,
\[  \prod_{v\in S} \max_{\phi\in\Phi} \norm{\phi}_v([\mathbf a])
= \prod_{v\in S} \max_{\phi\in\Phi}  \frac{\abs{\phi(\mathbf a)}_v }{\norm {\mathbf a}_v^d}
\geq \frac{c_1}{ \prod_{v\in S}\norm{\mathbf a}_v^d}
\geq \frac{c_2}{\prod_{v\mid\infty}\norm{\mathbf a}_v^d},  \]
since, for any finite place $v\in S$,
$\norm{\mathbf a}_v$ is bounded from above on the lattice~$\mathfrak o_F$.
For any archimedean place~$v\in F$, let $e_v=1$ if $F_v=\R$
and $e_v=2$ if $F_v=\C$. Then, 
using the inequality between geometric and arithmetic means, we have
\[ \prod_{v\mid\infty} \norm{\mathbf a}_v
 = 1 \cdot \prod_{v\mid\infty} \left(\norm{\mathbf a}_v^{1/e_v}\right)^{e_v}
\leq \left( \frac{1 + \sum_{v\mid\infty} e_v 
\norm{\mathbf a}_v^{1/e_v}}{1+[F:\Q]}\right)^{1+[F:\Q]}. \]
Now, given the equivalence of norms on the real vector space~$F^n\otimes_\Q\R$
we may assume that $\norm{\cdot}_\infty=\sum_{v\mid\infty} e_v \norm{\mathbf a}_v^{1/e_v}$.
This implies that there exists a positive real number~$c_3$ such that
\[ \prod_{v\mid\infty} \norm{\mathbf a}_v \leq c_3 (1+\norm{\mathbf a}_\infty)^{1+[F:\Q]}, \]
and finally
\[  \prod_{v\in S} \max_{\phi\in\Phi} \norm{\phi}_v([\mathbf a]) 
\geq (c_2/c_3) \left(1+\norm{\mathbf a}_\infty\right)^{-\kappa}, \]
with $\kappa=d(1+[F:\Q])$. The lemma is proved.
\end{proof}

Let us fix a stratum~$P$. Let $\mathbf A$ be
the set of all families $A=(A_v)_{v\in S}$, where, for each~$v\in S$,
$A_v$ is a maximal subset of~$\mathscr A$ such that
$D_{A_v}(F_v)\neq\emptyset$, and $d_\alpha=0$ for all
$\alpha\in A_v$ on the stratum~$P$. (Recall that by construction,
$d_\alpha$ is constant on each stratum.)
By Proposition~\ref{prop.non-trivial-S} and Corollary~\ref{coro.otherchars},
combined with the results of Section~\ref{sec:non-trivial-varch},
for each family $A=(A_v)$ in~$\mathbf A$
and each $\mathbf a$ in the stratum~$P$,
there exists
a holomorphic function~$\phi_{A}(\mathbf a;\cdot)$ on the half-plane
$\Re(s)>1-\delta$ such that
\[ \hat H(\mathbf a;s\lambda) = \sum_{A\in\mathbf A}
   \phi_A (\mathbf a;s) \prod_{\alpha\in\mathscr A_0^D(\mathbf a)}\zeta^S_{F_\alpha}(1+\lambda_\alpha s-\rho_\alpha)
 \prod_{v\in S}\prod_{\tilde\alpha\in A_v} \zeta_{F_{\tilde\alpha}}(\lambda_\alpha s-\rho_\alpha).
\]
Moreover, this function~$\phi_{A}(\mathbf a;s)$ satisfies
estimates of the form
\[ \abs{\phi_A(\mathbf a;s)} \ll (1+\norm a_\infty)^\eps (1+\abs s)^{\kappa+\sum \kappa'_v}
 \prod_{v\in S} d_v([\mathbf a],\partial P)^{-\kappa}  \prod_{v\in S}\norm{\mathbf a}^{-\kappa'_v}, \]
where $\kappa$ and $\kappa'_v$ are positive real numbers,
and $\kappa'_v$ can be chosen arbitrarily large if $v$ is archimedean.
The series $\mathrm Z_P$ decomposes as a sum, for $A\in\mathbf A$,
of subseries:
\[ \mathrm Z_P (s)= 
 \prod_{\alpha\in\mathscr A_0^D(\mathbf a)}\zeta^S_{F_\alpha}(1+\lambda_\alpha s-\rho_\alpha)
 \prod_{v\in S}\prod_{\tilde\alpha\in A_v} \zeta_{F_{\tilde\alpha}}(\lambda_\alpha s-\rho_\alpha)
 \mathrm Z_{P,A}(s), \]
where 
\[ \mathrm Z_{P,A}(s)= \sum_{\mathbf a\in P} \phi_A(\mathbf a;s). \]

Taking $\kappa'_v$ large enough for $v$ archimedean,
Lemma~\ref{lemm.distance} implies that 
the series $\mathrm Z_{P,A}$ is bounded term by term 
by the convergent series
\[\sum_u \frac1{(1+\norm{u})^N}, \]
where  $u$ runs over a lattice in the vector space $F^n\otimes_\Q\R$
and $N$ is an arbitrarily large integer, at a cost of $(1+\abs s)^N$.
This provides the desired meromorphic continuation of~$\mathrm Z_P$
to $\Re(s)>1-\delta$ and, consequently, of the height zeta function~$\mathrm Z$.

\subsubsection{Leading poles (log-anticanonical line bundle)}\label{sec.leading-pole}
For any character~$\mathbf a$, let $b_{\mathbf a}$ be the order
of the pole of the subseries in the Fourier expansion
of the height zeta function
corresponding  to characters collinear to~$\mathbf a$.

The order of the pole of the term corresponding to the trivial
character is
\[ b_{0}= \Card(\mathscr A\setminus \mathscr A_D)+\sum_{v\in S} 
\max_{\substack{B\subset  \mathscr A_{D,v}
\\ D_B(F_v)\neq\emptyset }} \Card B, \]
while, for $\mathbf a\neq 0$, one has
\[ b_{\mathbf a}\leq \#\{\alpha \in (\mathscr A\setminus \mathscr A_D)\,;\, d_\alpha(\mathbf a)=0\}
+ \sum_{v\in S} \max_{\substack{B\subset \mathscr A_{D,v}
\\ D_B(F_v)\neq\emptyset }} \Card \{ \alpha\in B\,;\, d_\alpha(\mathbf a)=0\}. \]

\begin{lemm} 
\label{lemm.leading-pole-r}
For any nonzero character~$\mathbf a$, $b_{\mathbf a}<b_0$.
\end{lemm}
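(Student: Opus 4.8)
Both the expression for~$b_0$ and the upper bound for~$b_{\mathbf a}$ stated above split as a \emph{global} term (a cardinality over~$\mathscr A\setminus\mathscr A_D$) together with one \emph{local} term for each~$v\in S$, and the bound for~$b_{\mathbf a}$ is obtained from~$b_0$ simply by replacing each index set by its intersection with $\{\alpha : d_\alpha(\mathbf a)=0\}$. Hence, term by term, the bound for~$b_{\mathbf a}$ is at most the corresponding term of~$b_0$, and to prove $b_{\mathbf a}<b_0$ it suffices to exhibit a single index~$\alpha$ with $d_\alpha(\mathbf a)\neq 0$ lying inside one of the relevant index sets. First I would observe that such an~$\alpha$ exists at all: since $\mathbf a\neq 0$ the rational function~$f_{\mathbf a}$ on~$X$ is non-constant, so its divisor $E(f_{\mathbf a})-\sum_\alpha d_\alpha(\mathbf a)D_\alpha$ cannot be effective, and therefore $\mathscr A_1:=\{\alpha\in\mathscr A : d_\alpha(\mathbf a)>0\}$ is non-empty.

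If $\mathscr A_1\not\subseteq\mathscr A_D$, pick $\alpha_0\in\mathscr A_1\setminus\mathscr A_D$; then $\{\alpha\in\mathscr A\setminus\mathscr A_D : d_\alpha(\mathbf a)=0\}$ is a proper subset of $\mathscr A\setminus\mathscr A_D$, the global term strictly drops, and the lemma follows at once. This is always the situation when $\mathscr A_D=\emptyset$, which is why in the case of rational points no further argument is needed.

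It remains to treat the case $\mathscr A_1\subseteq\mathscr A_D$, which is the heart of the matter: the global term is then unchanged, and one must find $v\in S$ whose local term strictly drops, i.e. such that no maximal face of~$\Clan_{F_v}(D)$ is disjoint from~$\mathscr A_1$, equivalently such that deleting the vertices of~$\mathscr A_1$ lowers $\dim\Clan_{F_v}(D)$. The mechanism I would exploit is that $f_{\mathbf a}$ is almost a homomorphism: for $t\in G$ the translation~$\tau_t$ extends to an automorphism of~$X$, and the identity $f_{\mathbf a}\circ\tau_t=f_{\mathbf a}+f_{\mathbf a}(t)$, valid on the dense open~$G$, holds as an identity of rational functions on~$X$. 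If $B$ is a face of~$\Clan_{F_v}(D)$ with $B\cap\mathscr A_1=\emptyset$, then the stratum~$D_B^\circ$ avoids the polar locus $\bigcup_{\alpha\in\mathscr A_1}D_\alpha$ of~$f_{\mathbf a}$, so $f_{\mathbf a}$ is regular along~$D_B^\circ$; evaluating the identity at a general point $x\in D_B^\circ$ and at~$t$ in the connected stabilizer~$H_B$ of~$x$ (a subgroup of~$G$ of dimension at least $\#B\geq 1$, since the orbit of~$x$ lies in the $G$-invariant set~$D_B^\circ$ of dimension $n-\#B$) forces $\langle\mathbf a,\cdot\rangle$ to vanish on~$H_B$. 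In particular $\#B\leq n-1$, so a maximal face of size~$n$ automatically meets~$\mathscr A_1$; more importantly, were a maximal face~$B_v$ disjoint from~$\mathscr A_1$ to exist for \emph{every} $v\in S$, the nonzero form $\langle\mathbf a,\cdot\rangle$ would vanish on each~$H_{B_v}$, and also (applying the same argument to the single vertices $\alpha\notin\mathscr A_D$, all of which have $d_\alpha(\mathbf a)=0$) on~$H_\alpha$ for every $\alpha\in\mathscr A\setminus\mathscr A_D$, whereas by the explicit description of the~$d_\alpha$ and of these stabilizers (see~\cite{chambert-loir-t2002}, Lemma~1.4, and~\cite{hassett-t99}) it must \emph{not} vanish on~$H_{\alpha_0}$ for $\alpha_0\in\mathscr A_1$.

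The step I expect to be the main obstacle — and precisely the subtlety alluded to in the introduction — is closing this last argument: showing that the subgroups $H_{B_v}+\sum_{\alpha\notin\mathscr A_D}H_\alpha$ attached to maximal faces over the places of~$S$ are, taken together, too large to admit a common nonzero annihilator. I would do this by appealing to the combinatorial structure of the boundary of equivariant compactifications of~$G$ from~\cite{hassett-t99}, which pins down the~$H_\alpha$ and how the faces of the Clemens complex assemble from them; in particular the ``generic'' boundary component, along which every nonzero linear form acquires a pole, always lies in~$\mathscr A_1$, so that assuming it to lie in~$\mathscr A_D$ still leaves one to force a genuine drop of the analytic Clemens complex over some $v\in S$.
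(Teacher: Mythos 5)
Your reduction is sound: the global term and each local term of the bound for $b_{\mathbf a}$ are obtained from those of $b_0$ by intersecting with $\{\alpha: d_\alpha(\mathbf a)=0\}$, so it suffices to produce one strict drop; and your first case (some $\alpha$ with $d_\alpha(\mathbf a)>0$ outside $\mathscr A_D$, so the global term drops) is correct. But in the remaining case, where all poles of $f_{\mathbf a}$ lie on components of $D$, your argument has a genuine gap, and you say so yourself: you reduce the claim to showing that the stabilizers $H_{B}$ of generic points of the strata $D_B$ attached to maximal faces disjoint from $\mathscr A_1$, together with the $H_\alpha$ for $\alpha\notin\mathscr A_D$, cannot all be annihilated by the nonzero form $\langle\mathbf a,\cdot\rangle$ --- and you give no proof of this. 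That statement is exactly as hard as the lemma itself in your formulation, and it is not clear it can be extracted from the combinatorics of the boundary alone: a priori nothing prevents all these stabilizers from lying in a common hyperplane. So the stabilizer/annihilator route, while it correctly records that $\mathbf a$ kills $H_B$ for every face $B$ avoiding the polar locus, does not close.

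The idea you are missing is a degeneration argument along a one-parameter subgroup transverse to $\ker\langle\mathbf a,\cdot\rangle$, which avoids any global statement about stabilizers. Choose $\mathbf y\in G(F)$ with $\langle\mathbf a,\mathbf y\rangle=1$, fix $v\in S$ and a maximal face $B\subset\mathscr A_{D,v}$ with $D_B(F_v)\neq\emptyset$ and $d_\alpha(\mathbf a)=0$ for all $\alpha\in B$, and take a point $\mathbf x\in D_B(F_v)$ at which $f_{\mathbf a}$ is defined. The map $t\mapsto t\mathbf y\cdot\mathbf x$ extends to $\mathbf P^1$ by properness of $X$, and $f_{\mathbf a}(t\mathbf y\cdot\mathbf x)=f_{\mathbf a}(\mathbf x)+t\to\infty$, so the limit point $\mathbf x_\infty$ lies on a polar divisor $D_\alpha$ of $f_{\mathbf a}$, i.e.\ one with $d_\alpha(\mathbf a)>0$; in the case under consideration such an $\alpha$ belongs to $\mathscr A_D$. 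Since $D_B$ is closed and $G$-invariant, $\mathbf x_\infty\in D_{B\cup\{\alpha\}}(F_v)$, so $B\cup\{\alpha\}$ is a strictly larger face of $\Clan_{F_v}(D)$ with an $F_v$-point, contradicting the maximality of $B$. This forces the local term at $v$ to drop and finishes the proof; no information about which subgroups the various stabilizers span is needed.
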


\begin{proof}
By contradiction. Assume that $b_{\mathbf a}=b_0$.
Comparing the formulae for~$b_{\mathbf a}$ and~$b_0$,
we see that
\begin{enumerate}
\item $d_\alpha(\mathbf a)=0$ for any $\alpha\in\mathscr A_D$
\item for any ~$v\in S$, there exists a subset 
$B\subset \mathscr A_{D,v}$ of maximal cardinality such that 
$D_B(F_v)\neq\emptyset$; moreover, 
$d_\alpha(\mathbf a)=0$ for all $\alpha\in B$.
\end{enumerate}
Fix a $\mathbf y\in G(F)$ such that $\langle \mathbf a,\mathbf y\rangle=1$.
Let us fix a $v\in S$ and let $B$ be a subset as in Condition~b).
By definition, 
the rational function $f_{\mathbf a}=\langle\mathbf a,\cdot\rangle$
is defined and nonzero at the generic point of the stratum~$D_B$.
Moreover, there exists such a generic point~$\mathbf x$ in $D_B(F_v)$, by the
choice of~$B$. Let $\mathbf x_\infty=\lim_{t\ra\infty} t \mathbf y\cdot \mathbf x$; this
is a point of $D_B(F_v)$.
The rational function~$t\mapsto f_{\mathbf a}(t \mathbf y\cdot \mathbf x)$ is well
defined on~$\mathbf P^1$, and
$f_{\mathbf a}(t \mathbf y\cdot \mathbf x)\ra\infty$ when $t\ra\infty$.
Consequently, the limiting point~$\mathbf x_\infty$  
belongs to a divisor~$D_\alpha$
such that $d_\alpha(\mathbf a)>0$. 
By condition~a), $\alpha\not\in \mathscr A_D$, 
hence the stratum $B'=B\cup\{\alpha\}$ is contained
in~$D=X\setminus U$, violating condition~b).
\end{proof}


\subsubsection{Application of a Tauberian theorem}
We have shown that the height zeta function~$\mathrm Z$ admits a 
meromorphic continuation to $\Re(s)>1-\delta$.
The poles are all on the line $\Re(s)=1$; with the exception
of the pole at $s=1$,
they are all given by the local factor at finite places in~$S$
of the zeta function of the fields~$F_\alpha$, and with order
\[ \sum_{\substack{v\in S \\ \text{$v$ finite}}}
     \max_{\substack{B\subset\mathscr A_{D,v} \\ D_B(F_v)\neq\emptyset}} \Card B. \]
At $s=1$, there may be a supplementary pole caused by the local
factors at archimedean places, and by the Euler product at places outside~$S$.

By the preceding lemma, $\mathrm Z$ has a pole of highest order
at $s=1$, contributed by the Fourier transform at the trivial
character only. Consequently,
\[ \lim_{s\ra 1} (s-1)^{b_0}\mathrm Z(s) =
 \lim_{s\ra 1} (s-1)^{b_0} \hat H(0;s\lambda ). \]
Let us write $\hat H^S(0;s\lambda)=\prod_{v\not\in S} \hat H_v(0;s\lambda)$.
According to~\cite{chambert-loir-tschinkel2009a}, Proposition~4.12,
\[  \lim_{s\ra 1} (s-1)^{\rang(\Pic(U))} \hat H^S(0;s\lambda)
        = \prod_{\alpha\not\in\mathscr A_D}\frac 1{\rho_\alpha}
\cdot
         \int_{U(\AD_F^S)} \prod_{v\not\in S}\delta_v(\mathbf x)
\cdot
          \mathrm d\tau^S_{(X,D)}(\mathbf x). \]
Observe that the integral in this formula is the volume,
with respect to the measure~$\tau_{(X,D)}^S$, of the 
set of $S$-adelic integral points in $X(\AD_F^S)$.

Moreover, for any place $v\in S$,
Theorem~4.3 of~\cite{chambert-loir-tschinkel2009a} shows that
that the limit
$$
\lim_{s\ra 1} (s-1)^{1+\dim\Clan_{F_v}(D)} \hat{H}_v(0; s\lambda) 
$$
is given by
$$
\sum_{\substack{A\subset\Clan_{F_v}(D)\\ \dim A=\dim\Clan_{F_v}(D)}} 
\prod_{\alpha\in A} \frac1{\rho_\alpha-1}  
\int_{D_A(F_v)} \prod_{\alpha\in\mathscr A_D\setminus A} 
\norm{\mathsec f_\alpha}_v(\mathbf x)^{-1}\,\mathrm d\tau_{D_A}(\mathbf x). 
$$
We have
\[  b_0=\rang(\Pic(U))+\sum_{v\in S} (1+\dim \Clan_{F_v}(D)). \]
Other poles of $\hat H(0;\lambda s)$ on the line $\Re(s)=1$ 
are contained in finitely many arithmetic progressions,
contributed by finitely many local factors of Dedekind zeta functions
for $v\in S$. Combining local and global estimates
in vertical strips, as in Sections 4.1
and 4.3 of~\cite{chambert-loir-tschinkel2009a},
and applying the Tauberian theorem~A.1 of that reference,
we finally obtain our Main theorem.

\begin{theo}\label{theo.main}
Let $X$ be a smooth projective equivariant compactifications 
of a vector group $G=\mathbb G_a^n$ over a number field $F$.  
Assume that $X\setminus G$ is geometrically 
a strict normal crossing divisor.    

Let $U\subset X$ be a dense $G$-invariant open subset 
and $D= X\setminus U$ the boundary. Fix
a smooth adelic metric on the log-anticanonical divisor~$-(K_X+D)$.
Let $S$ be a finite set of places of~$F$ containing the archimedean places.

Let $\mathscr U(\mathfrak o_{F,S})$ be 
the set of $S$-integral points on an integral model~$\mathscr U$ of~$U$
as in Section~\ref{sect:integral-points}. 
Let $N(B)$ be the number of points in $G(F)\cap \mathscr U(\mathfrak o_{F,S})$ 
of log-anticanonical height $\leq B$, as in Section~\ref{sect:height}.
Let 
\[ b=\rang(\EP(U)) + \sum_{v\in S} (1+\dim\Clan_{F_v}(D)). \]
and
\[ c = 
 \prod_{\alpha\not\in \mathscr A_D}\frac1{\rho_\alpha}
\cdot \tau_{(X,D)}^S(U(\AD_F^S)^{\text{\upshape int}}) \cdot
  \prod_{v\in S}\tau_v^{\max}(D(F_v)), \]
where 
\[
\tau_{(X,D)}^S(U(\AD_F^S)^{\text{\upshape int}})
= \int_{U(\AD_F^S)} \prod_{v\not\in S} \delta_v(\mathbf x)\cdot
           \mathrm d\tau_{(X,D)}^S(\mathbf x)\]
while, for each $v\in S$, $\tau_v^{\max}$ is the measure on~$D(F_v)$
given by
\[ \mathrm d\tau_v^{\max} (x)= 
\sum_{\substack{A\subset\Clan_{F_v}(D)\\ \dim A=\dim\Clan_{F_v}(D)}}
\prod_{\alpha\in A} \frac1{\rho_\alpha-1}  \prod_{\alpha\in\mathscr A_{D,v}\setminus A} \norm{\mathsec f_\alpha}_v(\mathbf x)^{-1}\,\mathrm d\tau_{D_A}(\mathbf x). \]

Then, as $B\ra\infty$, $N(B)$ has the following asymptotic expansion:
\[ 
N(B) = c\,  B (\log B)^{b-1}  
(1+\mathrm O(1/\log B)). \]
\end{theo}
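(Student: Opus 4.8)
The strategy is to read off the asymptotics of $N(B)$ from the analytic behaviour of the height zeta function $\mathrm Z(s)$, which by the Poisson summation formula~\eqref{eqn:poisson} decomposes as
\[ \mathrm Z(s) = \mathrm Z_0(s) + \sum_P \mathrm Z_P(s), \qquad \mathrm Z_0(s) = \hat H(0;s\lambda), \]
with $\lambda=-(K_X+D)$ the log-anticanonical class and the sum running over the finitely many strata~$P$ of the projective space of characters produced by resolution of singularities in families; recall that the abscissa of convergence of $\mathrm Z$ is~$1$. I would establish three things: (i) $\mathrm Z$ extends meromorphically to some half-plane $\Re(s)>1-\delta$; (ii) its poles on $\Re(s)=1$ lie in finitely many arithmetic progressions, with a \emph{unique} pole of maximal order at $s=1$; (iii) $\mathrm Z$ has polynomial growth in vertical strips. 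The asymptotic then follows from the Tauberian theorem~\ref{theo.tauber.S} of~\cite{chambert-loir-tschinkel2009a}.

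For the trivial character, write $\mathrm Z_0(s)=\hat H^S(0;s\lambda)\prod_{v\in S}\hat H_v(0;s\lambda)$. By Denef's formula and Proposition~\ref{prop.zeta.global} of~\cite{chambert-loir-tschinkel2009a}, the Euler product $\hat H^S(0;s\lambda)$ continues meromorphically with a pole of order $\rang\Pic(U)$ at $s=1$ and
\[ \lim_{s\to1}(s-1)^{\rang\Pic(U)}\hat H^S(0;s\lambda) = \prod_{\alpha\notin\mathscr A_D}\frac1{\rho_\alpha}\cdot\tau_{(X,D)}^S\big(U(\AD_F^S)^{\text{\upshape int}}\big). \]
For each $v\in S$, the Igusa integral $\hat H_v(0;s\lambda)$ continues meromorphically by Theorem~\ref{prop.igusa-limit} of~\loccit, with a pole of order $1+\dim\Clan_{F_v}(D)$ at $s=1$ and leading term $\tau_v^{\max}(D(F_v))$, its remaining poles on $\Re(s)=1$ lying in arithmetic progressions coming from the local Dedekind factors; the associated functions have polynomial growth in vertical strips. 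Multiplying, $\mathrm Z_0$ continues to $\Re(s)>1-\delta$ with a pole of order $b_0$ at $s=1$ --- $b_0$ being the integer of Section~\ref{sec.leading-pole} --- of leading coefficient the product of the contributions above, together with polynomial growth in vertical strips.

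For the non-trivial characters, I would fix a stratum~$P$ and combine Proposition~\ref{prop.non-trivial-S} at the places $v\in S$, Corollary~\ref{coro.otherchars} for the partial Euler product outside~$S$, and the integration-by-parts gain at archimedean places (Section~\ref{sec:non-trivial-varch}): every $\hat H(\mathbf a;s\lambda)$ with $[\mathbf a]\in P$ continues to $\Re(s)>1-\delta$, and the distance estimate Lemma~\ref{lemm.distance}, applied with the archimedean exponents taken large, then shows that $\mathrm Z_P(s)=\sum_{[\mathbf a]\in P}\hat H(\mathbf a;s\lambda)$ converges there, locally uniformly, with polynomial growth in vertical strips. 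Summing over the finitely many strata yields the continuation of $\sum_P\mathrm Z_P$, and by Lemma~\ref{lemm.leading-pole-r} its pole at $s=1$ has order $b_{\mathbf a}\le b_0-1$, strictly below that of $\mathrm Z_0$. Thus $\mathrm Z$ satisfies (i)--(iii), with the leading pole --- of order $b_0$ at $s=1$ --- coming from $\mathrm Z_0$ alone, so that $\lim_{s\to1}(s-1)^{b_0}\mathrm Z(s)$ is the product of the contributions computed above; applying the Tauberian theorem~\ref{theo.tauber.S} of~\loccit and checking $b=b_0$ via Section~\ref{sec.leading-pole} gives $N(B)\sim\Theta\,B(\log B)^{b-1}$ with $\Theta$ the constant in the statement.

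The main obstacle is not this final assembly but lies in the inputs. The genuinely hard point is the analytic control --- meromorphic continuation past $\Re(s)=1$ together with decay in $\norm{\mathbf a}_v$ and in the distance to the strata boundaries --- of the \emph{oscillatory} Igusa integrals $\hat H_v(\mathbf a;s\lambda)$ for $v\in S$ and $\mathbf a\neq 0$ (Section~\ref{sec.non-trivial-S}, resting on the stationary-phase bounds of Section~\ref{sect:dim1}), without which the series $\sum_P\mathrm Z_P$ would not extend past the line $\Re(s)=1$. A second, subtler point --- less automatic here than for rational points --- is the strict inequality $b_{\mathbf a}<b_0$ of Lemma~\ref{lemm.leading-pole-r}: one uses the $G$-action to flow a generic point of a maximal boundary stratum to infinity along a level set of $\langle\mathbf a,\cdot\rangle$ and observes that the limit meets a component $D_\alpha$ with $\alpha\notin\mathscr A_D$ and $d_\alpha(\mathbf a)>0$, which forces a strictly larger admissible stratum and hence a drop in the pole order. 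Everything else --- assembling the vertical-strip bounds into the hypotheses of the Tauberian theorem and recognizing the residue as the advertised product of an adelic volume and local residue integrals --- is routine bookkeeping.
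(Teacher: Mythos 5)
Your proposal is correct and follows essentially the same route as the paper: Poisson summation, the trivial-character contribution computed from the Euler product outside~$S$ and the Igusa integrals at places in~$S$, meromorphic continuation of the non-trivial-character strata via the oscillatory estimates together with Lemma~\ref{lemm.distance}, the strict pole-order drop of Lemma~\ref{lemm.leading-pole-r}, and the Tauberian theorem. You have also correctly identified where the real work lies (the oscillatory Igusa integrals of Section~\ref{sec.non-trivial-S} and the leading-pole comparison), so there is nothing to add.
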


\begin{rema}\label{rema:N-V}
Let $V(B)$ be the volume, with respect to the chosen Haar measure
on~$G(\AD_F)$ of the set of $S$-integral adelic points of
log-anticanonical height $\leq B$.
By definition, $\hat H(0;s\lambda)$ is the Stieltjes--Mellin transform
of the measure~$\mathrm dV(B)$ on~$\R_+$.
By the same Tauberian theorem, one has
\[ N(B)\sim V(B). \]
\end{rema}

\subsubsection{Equidistribution}\label{sec:equidistribution}
Note that all considerations above apply to an arbitrary 
smooth adelic metrization of the log-anticanonical line bundle.

Applying the abstract equidistribution theorem 
(Proposition~2.10 of~\cite{chambert-loir-tschinkel2009a}), 
we obtain that integral points of height~$\leq B$
equidistribute, when $B\ra\infty$, towards the unique
probability measure proportional to
\[ \prod_{v\not\in S}\delta_v(x)\cdot \mathrm d\tau_{(X,D)}^S (x)\cdot \prod_{v\in S}\mathrm d\tau_v^{\max}(x).\]

\def\noop#1{\ignorespaces}

\bibliographystyle{smfplain}

\bibliography{short}

\end{document}